\documentclass{amsart}
\usepackage{url}
\usepackage{hyperref}
\hypersetup{colorlinks=true, citecolor=blue}
\providecommand{\texorpdfstring}[2]{#1}
\usepackage[lite]{amsrefs}
\usepackage{amssymb}
\usepackage{mathrsfs}
\usepackage[shortlabels]{enumitem}
\usepackage[all,cmtip]{xy}

\newcommand{\tensor}{\otimes}
\newcommand{\isom}{\cong}

\newcommand{\C}{\mathbb{C}}

\newcommand{\Q}{\mathbb{Q}}
\newcommand{\Z}{\mathbb{Z}}
\newcommand{\A}{\mathbb{A}}
\renewcommand{\P}{\mathbb{P}}

\newcommand{\M}{\mathcal{M}}

\newcommand{\F}{\mathcal{F}}

\renewcommand{\O}{\mathcal{O}}

\renewcommand{\L}{\mathcal{L}}

\newcommand{\codim}{\textrm{codim}}
\newcommand{\w}{\omega}
\renewcommand{\k}{\kappa}
\newcommand{\K}{\mathcal{K}}
\newcommand{\gr}{\textrm{gr}}

\newcommand{\Koz}{\textrm{Koz}}

\numberwithin{equation}{section}

\theoremstyle{plain}
\newtheorem{thm}[equation]{Theorem}
\newtheorem{cor}[equation]{Corollary}
\newtheorem{lem}[equation]{Lemma}
\newtheorem{prop}[equation]{Proposition}
\newtheorem{conj}[equation]{Conjecture}

\theoremstyle{definition}
\newtheorem{defn}[equation]{Definition}

\theoremstyle{remark}
\newtheorem{rmk}[equation]{Remark}
\newtheorem{ex}[equation]{Example}

\begin{document}

\title[Logarithmic base change theorem]{Logarithmic base change theorem and smooth descent of positivity of log canonical divisor}
\author{Sung Gi Park}
\address{Department of Mathematics, Harvard University, 1 Oxford Street, Cambridge, MA 02138, USA}
\email{sgpark@math.harvard.edu}

\date{\today}

\begin{abstract}
We prove a logarithmic base change theorem for pushforwards of pluri-canonical bundles and use it to deduce that positivity properties of log canonical divisors descend via smooth projective morphisms.

As an application, for a surjective morphism \(f:X\to Y\) with \(\k(X)\ge 0\) and \(-K_Y\) big,  we prove \(Y\setminus \Delta(f)\) is of log general type, where \(\Delta(f)\) is the discriminant locus. In particular, when \(Y=\P^n\) we have \(\dim \Delta(f)=n-1\) and \(\deg \Delta(f)\ge n+2\), generalizing the case \(n=1\) proved by Viehweg-Zuo. In addition, we prove Popa's conjecture on the superadditivity of the logarithmic Kodaira dimension of smooth algebraic fiber spaces over bases of dimension at most three and analyze related problems.
\end{abstract}

\maketitle

\tableofcontents

\section{Introduction}
\label{sec:intro}

In this paper, we prove a number of results on the behavior of positivity, hyperbolicity, and Kodaira dimension under smooth morphisms of quasi-projective varieties, based on a technical tool we study under the name of logarithmic base change theorem. Throughout the text, a variety is a reduced separated scheme of finite type over \(\C\). A pair \((Y,D)\) consists of a variety \(Y\) and a formal \(\Q\)-linear combination of divisors \(D\), and we call it a log smooth pair if \(Y\) is smooth and \(D\) is a reduced simple normal crossing divisor.

Given a surjective morphism \(f:X\to Y\) of smooth projective varieties, the study of the discriminant locus \(\Delta(f)\subset Y\), defined as the set of points \(y\in Y\) with singular scheme-theoretic fibers \(X_y:=f^*(y)\), has been an active area of research. For example, Catanese-Schneider \cite{CS95}*{Question 4.1} asked the question whether \(f:X\to \P^1\) has at least \(3\) singular fibers if \(X\) is of general type. Kov\'acs \cite{Kovacs00} gave a partial answer when \(X\) is canonically polarized and Viehweg-Zuo \cite{VZ01} answered affirmatively under the weaker assumption that \(X\) has non-negative Kodaira dimension. The question is alternatively phrased as follows: the effectivity of the canonical divisor of \(X\) imposes the bigness of the log canonical divisor of \(\P^1\setminus \Delta(f)\).

We refine and generalize this phenomenon to higher dimensional bases; given a smooth projective morphism \(f:X\to Y\) of smooth quasi-projective varieties, the positivity of the log canonical divisor of \(X\) descends to the positivity of the log canonical divisor of \(Y\). For example, bigness (resp. effectivity) descends to bigness (resp. pseudo-effectivity). The precise statements are obtained as consequences of our main technical result, which we call the \textit{logarithmic base change theorem} for pushforwards of logarithmic pluri-canonical bundles.

A priori, Viehweg introduced the base change theorem for pushforwards of pluri-canonical bundles to study the subadditivity of the Kodaira dimension for algebraic fiber spaces. Here is a version given by Mori \cite{Mori87}*{(4.10)}.

\begin{thm}[\cites{Viehweg83, Mori87}, Base Change Theorem]
\label{thm: base change theorem}
Let \(X, Y, Y'\) be smooth quasi-projective varieties and \(f:X\to Y\) be a surjective projective morphism and \(g:Y'\to Y\) be a flat projective morphism. Let \(\mu:X''\to X'=X\times_Y Y'\) be a resolution of singularities. Consider the following commutative diagram:
\begin{displaymath}
\xymatrix{
{X}\ar[d]_-{f}& {X'}\ar[l]_-{g'}\ar[d]_-{f'}& {X''}\ar[l]_-{\mu}\ar[ld]^-{f''}\\
{Y}&{Y'}\ar[l]^-{g}&\\
}
\end{displaymath}
Then for a positive integer \(N\),
\begin{enumerate}
    \item there is an inclusion
    \[
    f''_*\omega^N_{X''/Y'}\subset g^*f_*\omega_{X/Y}^N.
    \]
    It is an equality at a point \(y'\in Y'\) if \(g(y')\) is a codimension 1 point of \(Y\) and \(f\) is semistable in the neighborhood of \(g(y')\).
    \item there is an inclusion
    \[
    g_*f''_*\omega^N_{X''/Y}\subset (f_* \omega^N_{X/Y}\tensor g_*\omega_{Y'/Y}^N)^{**}.
    \]
    It is an equality at a codimension \(1\) point \(y\in Y\) if \(f\) or \(g\) is semistable in the neighborhood of \(y\), where \(^{**}\) denotes the double dual.
\end{enumerate}
\end{thm}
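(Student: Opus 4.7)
The plan is to reduce both statements to comparisons of sheaves away from a codimension-two subset of the relevant base, using the torsion-freeness of pushforwards of pluri-canonical bundles (Viehweg), and then to combine flat base change for the relative dualizing sheaf with the trace inclusion for the desingularization $\mu:X''\to X'$. Since $f_*\omega^N_{X/Y}$ and $f''_*\omega^N_{X''/Y'}$ are torsion-free, the inclusions in both (1) and (2) may be verified after restriction to a big open subset of the appropriate base ($Y'$ in~(1), $Y$ in~(2)), and the equality assertions at a codimension-$1$ point are local in nature, so we may replace the base by the spectrum of a discrete valuation ring in the key local arguments.

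For (1), the flatness of $g$ gives flat base change for the relative dualizing sheaf, namely $g'^*\omega_{X/Y}\isom \omega_{X'/Y'}$, and hence $g^*f_*\omega^N_{X/Y}\isom f'_*\omega^N_{X'/Y'}$ on a big open. For the birational map $\mu:X''\to X'$, the trace map produces a natural inclusion $\mu_*\omega^N_{X''/Y'}\hookrightarrow \omega^{[N]}_{X'/Y'}$ wherever $X'$ is normal; this holds on a big open of $X'$ because $X'$ is Cohen--Macaulay and generically smooth. Applying $f'_*$ and using $f''=f'\circ\mu$ gives the desired inclusion $f''_*\omega^N_{X''/Y'}\subset g^*f_*\omega^N_{X/Y}$. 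If $f$ is semistable in a neighborhood of a codimension-$1$ point $g(y')\in Y$, the étale-local structure of a semistable degeneration forces $X'$ to be toroidal (with canonical Gorenstein singularities) near the fiber over $y'$; the trace inclusion is then an equality and $\mu$ is crepant there, yielding equality at $y'$.

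For (2), one uses the identity $\omega^N_{X''/Y}\isom \omega^N_{X''/Y'}\tensor f''^*\omega^N_{Y'/Y}$ on the smooth locus of $g\circ f''$, applies the projection formula to obtain $f''_*\omega^N_{X''/Y}\isom f''_*\omega^N_{X''/Y'}\tensor \omega^N_{Y'/Y}$ there, and pushes forward along $g$. Combining with (1) and reflexivizing gives the desired inclusion $g_*f''_*\omega^N_{X''/Y}\subset (f_*\omega^N_{X/Y}\tensor g_*\omega^N_{Y'/Y})^{**}$, with the double dual on the right absorbing any discrepancy in codimension $\geq 2$. When either $f$ or $g$ is semistable at a codimension-$1$ point $y\in Y$, each intermediate comparison becomes an equality by the argument of~(1), yielding equality at that point.

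The main obstacle is controlling the singularities of the fiber product $X'=X\times_Y Y'$: for general flat $g$ and arbitrary $f$, the variety $X'$ can fail to be normal or Gorenstein, and the cokernel of the trace inclusion $\mu_*\omega^N_{X''/Y'}\hookrightarrow \omega^{[N]}_{X'/Y'}$ reflects precisely the discrepancies introduced by the resolution $\mu$. The torsion-freeness of $f_*\omega^N_{X/Y}$ is what legitimizes the codimension-$2$ reduction and permits us to work with an arbitrary flat $g$ for the inclusion statements, while the semistability hypothesis is exactly what forces $X'$ to be toroidal at a codimension-$1$ point, making every intermediate comparison an equality there.
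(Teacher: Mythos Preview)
This theorem is cited from \cites{Viehweg83, Mori87} rather than proved in the paper; the only argument the paper supplies is the sketch of the equality conditions in Section~2.1. Your outline matches that sketch and the standard Viehweg--Mori argument: flat base change identifies $g^*f_*\omega^N_{X/Y}$ with $f'_*\omega^N_{X'/Y'}$, the trace/discrepancy inclusion handles the resolution $\mu$, and semistability at a codimension-$1$ point forces $X'$ to have the normal toric local model $x_1\cdots x_n=y_1^{f_1}\cdots y_m^{f_m}$ (hence canonical singularities), giving equality there.
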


Note that the morphism \(f\) is semistable over a codimension \(1\) point \(y\in Y\) if the fiber \(X_y\) has reduced normal crossing singularities. Outside of a codimension \(2\) subvariety, Viehweg's base change theorem gives an inclusion of pushforwards of pluri-canonical bundles of the fiber product to the tensor products of pushforwards of pluri-canonical bundles of each morphism.

We start by pointing out a rather striking new phenomenon: by suitably introducing poles in the discriminant loci of morphisms, the inclusion in the base change theorem is reversed. This represents a distinct departure from previously observed phenomena.

\begin{thm}[Logarithmic Base Change Theorem]
\label{thm: logarithmic base change theorem}
Let \((X,E), (Y,D), (Y',D')\) be quasi-projective log smooth pairs, and let \(f:(X,E)\to (Y,D)\), \(g:(Y',D')\to (Y,D)\) be surjective projective morphisms of pairs such that \(E=f^{-1}(D), D'=g^{-1}(D)\) and \(g|_{Y'\setminus D'}:Y'\setminus D'\to Y\setminus D\) is smooth. Let \(X'\) be the union of the irreducible components of \(X\times_YY'\) dominating \(Y\), and \(E'=g'^{-1}(E)\). Consider the following commutative diagram:
\begin{equation}
\label{log diagram}
\xymatrix{
{(X,E)}\ar[d]_-{f}& {(X',E')}\ar[l]_-{g'}\ar[d]_-{f'}& {(X'',E'')}\ar[l]_-{\mu}\ar[ld]^-{f''}\\
{(Y,D)}&{(Y',D')}\ar[l]^-{g}&\\
}
\end{equation}
where \(\mu:(X'',E'')\to (X',E')\) is a log resolution of pairs with \(X''\setminus E''\isom X'\setminus E'\), so that \((g\circ f'')^{-1}D=E''\). Then there exists an inclusion for a positive integer \(N\):
\[
\left[f_* \left(\w_X(E)/\w_Y(D)^{\tensor N}\right)\tensor g_*\left(\w_{Y'}(D')/\w_Y(D)^{\tensor N}\right)\right]^{**}\subset\left[h_*\left(\omega_{X''}(E'')/\omega_Y(D)^{\tensor N}\right)\right]^{**}
\]
where \(h=g\circ f''\).
\end{thm}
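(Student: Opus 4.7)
The plan is to construct a natural morphism on $X''$ of logarithmic pluri-canonical sheaves, push it down to $Y$ via the projection formula and adjunction, then pass to reflexive hulls. Set $g''=g'\circ\mu:X''\to X$, so $h=g\circ f''=f\circ g''$ and $E''=g''^{-1}(E)$ set-theoretically. On the open set $X''\setminus E''\isom X'\setminus E'$, smoothness of $g|_{Y'\setminus D'}$ and transversality of the fiber product give a short exact sequence
\[
0\to h^*\Omega^1_Y\to g''^*\Omega^1_X\oplus f''^*\Omega^1_{Y'}\to \Omega^1_{X''}\to 0.
\]
Augmenting with the log structures on all terms and taking determinants produces, after globalization via wedge products, a natural morphism of line bundles on $X''$,
\[
\Phi\colon g''^*\omega_X(E)\otimes f''^*\omega_{Y'}(D')\otimes h^*\omega_Y(D)^{-1}\longrightarrow \omega_{X''}(E''),
\]
which restricts to an isomorphism on $X''\setminus E''$. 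As a nonzero morphism of line bundles, $\Phi$ is injective, and its $N$-th tensor power gives
\[
g''^*\bigl(\omega_X(E)/\omega_Y(D)^{\otimes N}\bigr)\otimes f''^*\bigl(\omega_{Y'}(D')/\omega_Y(D)^{\otimes N}\bigr)\hookrightarrow \omega_{X''}(E'')/\omega_Y(D)^{\otimes N}.
\]

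Next, I would push this down via $h_*=g_*\circ f''_*$. The projection formula for $f''$ (with the locally free factor $f''^*(\omega_{Y'}(D')/\omega_Y(D)^{\otimes N})$) recasts the pushforward of the left-hand side as
\[
g_*\bigl[f''_*g''^*\bigl(\omega_X(E)/\omega_Y(D)^{\otimes N}\bigr)\otimes \bigl(\omega_{Y'}(D')/\omega_Y(D)^{\otimes N}\bigr)\bigr].
\]
The composition of the base change natural transformation $g^*f_*\to f'_*(g')^*$ with the unit $\mathrm{id}\to\mu_*\mu^*$ yields a map $g^*f_*(\omega_X(E)/\omega_Y(D)^{\otimes N})\to f''_*g''^*(\omega_X(E)/\omega_Y(D)^{\otimes N})$ which is an isomorphism on $Y'\setminus D'$ by smooth base change (since $g|_{Y'\setminus D'}$ is smooth) and by the fact that $\mu$ is an isomorphism over $X'\setminus E'$. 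Applying the projection formula for $g$ produces a natural morphism
\[
f_*\bigl(\omega_X(E)/\omega_Y(D)^{\otimes N}\bigr)\otimes g_*\bigl(\omega_{Y'}(D')/\omega_Y(D)^{\otimes N}\bigr)\to h_*\bigl(\omega_{X''}(E'')/\omega_Y(D)^{\otimes N}\bigr),
\]
which is injective on a dense open subset of $Y$. Since both sides are torsion-free coherent sheaves on the smooth variety $Y$, passing to reflexive hulls yields the stated containment of double duals.

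The main obstacle is the construction of $\Phi$—specifically, verifying that the wedge of log forms extends globally without higher-order poles along the components of $E''$. On components dominating a component of $E$ or of $D'$, this is functoriality of log differentials. Along the purely $\mu$-exceptional components, it requires a careful local computation at the generic point of each such divisor, exploiting that $\mu$ is a log resolution with $X''\setminus E''\isom X'\setminus E'$ (so every $\mu$-exceptional divisor appears in $E''$) together with the fact that $E''=(g\circ f'')^{-1}(D)$ is the full reduced preimage. This absorption of the ramification by the logarithmic poles is precisely the geometric content that reverses the direction of the classical inclusion in Theorem~\ref{thm: base change theorem}.
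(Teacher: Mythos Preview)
Your overall architecture is correct and matches the paper's: once the map $\Phi$ exists on all of $X''$, the pushdown via projection formula and flat base change (after shrinking $Y$ to a big open where $g$ is what the paper calls \emph{strictly smooth} and $f_*(\omega_X(E)/\omega_Y(D)^{\otimes N})$ is locally free) gives the desired inclusion of reflexive hulls. This is exactly Corollary~\ref{cor: logarithmic base change theorem} and the short argument following it.

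The gap is in the construction of $\Phi$ itself. What you have built is a canonical isomorphism on $X''\setminus E''$; the assertion that it extends to a \emph{morphism} (rather than a rational map with poles) on all of $X''$ is equivalent to the statement that the pair $(X',\,E+D'-D)$ is log canonical, since $\omega_{X'}(E+D'-D)=g'^*\omega_X(E)\otimes f'^*\omega_{Y'}(D')\otimes (g\circ f')^*\omega_Y(D)^{-1}$ and $\Phi$ is injective iff every discrepancy is $\ge -1$. This is the entire technical content of the theorem, and your last paragraph does not prove it. The claim that ``on components dominating a component of $E$ or of $D'$, this is functoriality of log differentials'' is not valid: the fiber product $X'$ is locally the binomial hypersurface $x_1^{e_1}\cdots x_n^{e_n}=y_1^{f_1}\cdots y_m^{f_m}$, which is in general non-normal and singular in codimension~$1$, so there are no smooth strata over which functoriality settles the question. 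All components of $E''$ are effectively $\mu$-exceptional over these singularities, and knowing that each exceptional divisor lies in $E''$ only tells you that you are allowed a simple pole there---it does not tell you that the actual pole order of the rational map is at most~$1$.

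The paper establishes exactly this log canonical statement (Proposition~\ref{prop: log canonical}) by running Bierstone--Milman's resolution algorithm for binomial hypersurfaces: one blows up coordinate linear subspaces and checks (Lemma~\ref{lem: blow up binomial}) that the line bundle $\omega_H(D(w)-D(t))$ is \emph{crepant} for each such blow-up, so discrepancies are preserved; the algorithm terminates at a smooth pair where log canonicity is evident, and one inducts on the lexicographic invariant $(m(H),M(H))$. Without this (or an equivalent) discrepancy computation, the existence of $\Phi$ is unproven and the argument does not close.
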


Here, \(\w_X(E)/\w_Y(D):=\w_X(E)\tensor f^*\w_Y(D)^{-1}\) and \(\w_X(E)/\w_Y(D)^{\tensor N}\) is the \(N\)-tensor power of this line bundle. Moreover, \(f^{-1}(\bullet)\) indicates the set-theoretic preimage of \(\bullet\) with the reduced scheme structure. While the equality condition in Viehweg's base change theorem follows from the observation that the fiber product \(X'\) has normal toric singularities, the fiber product \(X'\) in the logarithmic base change theorem has binomial hypersurface singularities. The proof of Theorem \ref{thm: logarithmic base change theorem} follows from a careful analysis of Bierstone and Milman's resolution of binomial hypersurface singularities \cite{BM06} applied to singularities of pairs.

Viehweg applied the base change theorem inductively on the iterated fiber products to study Iitaka's \(C_{n,m}^+\) conjecture on the subadditivity of the Kodaira dimension; this technique is the so-called Viehweg's fiber product trick. Likewise, we employ a logarithmic analogue of this fiber product trick, which is described in the following paragraph.

Let \(f:(X,E)\to (Y,D)\) be a projective morphism of quasi-projective log smooth pairs such that \(E=f^{-1}(D)\) and \(f|_{X\setminus E}\) is smooth. Define \(X^s:=X\times_Y\cdots\times_YX\) as the \(s\)-fold fiber product of \(f\), with the induced morphism \(f^s:X^s\to Y\), and define \(E^s=(f^s)^{-1}(D)\). Then \(f^s|_{X^s\setminus E^s}\) is a smooth projective morphism. Choose a log resolution \(\mu^s:(X^{(s)},E^{(s)}) \to (X^s,E^s)\) of the union of the irreducible components of \(X^s\) dominating \(Y\), with the induced morphism \(f^{(s)}:(X^{(s)},E^{(s)})\to (Y,D)\) of log smooth pairs satisfying \(E^{(s)}=(f^{(s)})^{-1}(D)\) and \(f^{(s)}|_{X^{(s)}\setminus E^{(s)}}\) smooth. Then we have:

\begin{cor}[Logarithmic Fiber Product Trick]
\label{cor: logarithmic fiber product trick}
With the notation in the previous paragraph, we have the following inclusion:
\[
\left[\bigotimes^s f_* \left(\w_X(E)/\w_Y(D)^{\tensor N}\right)\right]^{**}\hookrightarrow \left[f^{(s)}_*\left(\w_{X^{(s)}}(E^{(s)})/\w_Y(D)^{\tensor N}\right)\right]^{**}
\]
for \(N,s>0\). 
\end{cor}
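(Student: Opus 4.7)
I would prove the corollary by induction on $s$. The base case $s=1$ is immediate: $(X,E)$ is already log smooth with $E=f^{-1}(D)$ and $f|_{X\setminus E}$ smooth, so one may take $(X^{(1)},E^{(1)})=(X,E)$ and the asserted map becomes the identity; the fact that the statement is independent of the choice of log resolution $\mu^1$ follows from the same birational invariance argument used below.

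For the inductive step from $s$ to $s+1$, I apply Theorem \ref{thm: logarithmic base change theorem} with the two morphisms being $f:(X,E)\to(Y,D)$ itself and $g:=f^{(s)}:(X^{(s)},E^{(s)})\to(Y,D)$. The hypotheses of the theorem, namely $E=f^{-1}(D)$, $E^{(s)}=(f^{(s)})^{-1}(D)$, and smoothness of $g$ over $Y\setminus D$, all hold by the construction of the iterated fiber product and its log resolution. The theorem then yields an inclusion
\[
\bigl[f_*(\w_X(E)/\w_Y(D)^{\tensor N})\tensor f^{(s)}_*(\w_{X^{(s)}}(E^{(s)})/\w_Y(D)^{\tensor N})\bigr]^{**}\subset\bigl[h_*(\w_Z(E_Z)/\w_Y(D)^{\tensor N})\bigr]^{**},
\]
where $Z$ is a log resolution of the dominating components of $X\times_Y X^{(s)}$, the divisor $E_Z$ is the reduced preimage of $D$, and $h:Z\to Y$ is the composite morphism.

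Next I identify this right-hand side with the expression involving $X^{(s+1)}$. Since $\mu^s:X^{(s)}\to X^s$ is birational, the natural map $X\times_Y X^{(s)}\to X\times_Y X^s=X^{s+1}$ restricts to a birational correspondence between their dominating components. Thus both $Z$ and $X^{(s+1)}$ are log resolutions of birational ambient spaces with reduced boundary equal to the preimage of $D$. The standard birational invariance of pushforwards of twisted log canonical bundles for log smooth pairs, namely that $\phi_*\w_{Z_1}(E_1)=\w_{Z_2}(E_2)$ for a birational morphism $\phi:(Z_1,E_1)\to(Z_2,E_2)$ with $E_1=\phi^{-1}(E_2)_\mathrm{red}$, then yields
\[
\bigl[h_*(\w_Z(E_Z)/\w_Y(D)^{\tensor N})\bigr]^{**}\isom\bigl[f^{(s+1)}_*(\w_{X^{(s+1)}}(E^{(s+1)})/\w_Y(D)^{\tensor N})\bigr]^{**}.
\]

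Finally, tensoring the inductive hypothesis at step $s$ with $f_*(\w_X(E)/\w_Y(D)^{\tensor N})$ and passing to reflexive hulls produces an injection of $[\bigotimes^{s+1}f_*(\w_X(E)/\w_Y(D)^{\tensor N})]^{**}$ into the left-hand side of the first displayed inclusion, and concatenating the resulting maps delivers the desired statement for $s+1$. I expect the main technical subtlety to be the birational invariance step: one must verify that the particular log resolution $X''$ furnished by Theorem \ref{thm: logarithmic base change theorem} may be exchanged for $X^{(s+1)}$ at the level of reflexive pushforwards to $Y$. This reduces, via passage to a common log resolution, to the vanishing of residues of top-degree log forms along exceptional divisors and to the fact that the tensor twist by $\w_Y(D)^{-N}$ is pulled back from $Y$ and so commutes with the reflexivization.
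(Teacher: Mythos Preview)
Your proof is correct and is exactly the argument the paper has in mind: the paper states only that the corollary ``is immediate from Theorem~\ref{thm: logarithmic base change theorem} iterated $s$-times,'' and your induction on $s$ spells out precisely this iteration, including the birational invariance step (which the paper alludes to just before the statement by noting that the right-hand side is independent of the choice of log resolution of $(X^s,E^s)$).
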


Notice that the right-hand side of the above inclusion is independent of the choice of a log resolution of the pair \((X^s,E^s)\). The proof is immediate from Theorem \ref{thm: logarithmic base change theorem} iterated \(s\)-times.

In contrast to Viehweg's base change theorem and fiber product trick, the logarithmic analogues have a distinctive feature: the inclusion goes in the opposite direction, which is consistent with Popa's conjectures \cite{Popa21} on the superadditivity of the logarithmic Kodaira dimension. Specifically, the (log) Kodaira dimension of the source imposes a lower bound on the (log) Kodaira dimension of the base. This phenomenon, namely a \textit{smooth projective descent of positivity of log canonical divisor}, is opposite in nature to Iitaka's \(C_{n,m}^+\) conjecture. As a consequence of the logarithmic base change theorem, we derive a number of results on the (log) Kodaira dimension and the discriminant loci of morphisms.

To begin with, we recall the definition of the logarithmic Kodaira dimension of a quasi-projective variety. For a \(\Q\)-Cartier divisor \(L\) on a normal projective variety \(X\), \(\k(X,L)\) denotes the Iitaka dimension of \(L\). For instance, \(\k(X):=\k(X,\w_X)\) is the Kodaira dimension of \(X\) when \(X\) is a smooth projective variety. For a smooth quasi-projective variety \(X\), the logarithmic Kodaira dimension is defined as
\[
\bar\k(X):=\k(\bar X, K_{\bar X}+D),
\]
where \(\bar X\) is a compactification of \(X\) with boundary a reduced simple normal crossing divisor \(D\). We say \(X\) is of log general type if \(\bar \k(X)=\dim X\). It is well known that the logarithmic Kodaira dimension is well-defined, independent of the choice of a compactification.

As a first application of the logarithmic base change theorem, we prove a logarithmic analogue of Popa-Schnell \cite{PS22}*{Theorem A}.

\begin{thm}
\label{thm: log general type descends}
Let \(f:X\to Y\) be a smooth projective morphism of smooth quasi-projective varieties whose general fiber \(F\) is connected and \(\kappa(F)\ge 0\). Then \(Y\) is of log general type if and only if \(\bar\k(X)=\k(F)+\dim Y\).
\end{thm}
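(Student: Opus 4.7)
Choose a log smooth compactification $\bar f : (\bar X, E) \to (\bar Y, D)$ of $f$ with $E = \bar f^{-1}(D)$ reduced SNC and $\bar f|_{\bar X \setminus E} = f$. Then $\bar\kappa(X) = \kappa(\bar X, \omega_{\bar X}(E))$, the variety $Y$ is of log general type if and only if $\omega_{\bar Y}(D)$ is big, and, since $E$ is disjoint from a general fiber $F$, the restriction $\omega_{\bar X}(E)|_F = \omega_F$. Iitaka's easy addition therefore yields
\[
\bar\kappa(X) \le \kappa(F) + \dim Y,
\]
so both implications reduce to upgrading this inequality to an equality exactly when $\omega_{\bar Y}(D)$ is big.

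\textbf{$Y$ log general type implies equality.} Fix $N_0$ large enough that $h_{N_0} := h^0(F,\omega_F^{\otimes N_0}) \ge 1$, so that the sheaf $\F_{N_0} := \bar f_*(\omega_{\bar X}(E)/\omega_{\bar Y}(D)^{\otimes N_0})$, which by projection formula equals $\bar f_*\omega_{\bar X/\bar Y}(E)^{\otimes N_0} \otimes \O_{\bar Y}(-N_0 D)$, has positive generic rank $h_{N_0}$. The Logarithmic Fiber Product Trick (Corollary~\ref{cor: logarithmic fiber product trick}), tensored by $\omega_{\bar Y}(D)^{\otimes N_0}$, gives for every $s \ge 1$
\[
\bigl[\textstyle\bigotimes^s \F_{N_0}\bigr]^{**} \otimes \omega_{\bar Y}(D)^{\otimes N_0} \hookrightarrow \bigl[f^{(s)}_*\omega_{\bar X^{(s)}}(E^{(s)})^{\otimes N_0}\bigr]^{**}.
\]
The pushforward $\bar f_*\omega_{\bar X/\bar Y}(E)^{\otimes N_0}$ is weakly positive on $\bar Y$ (logarithmic Viehweg weak positivity, due to Campana, Kawamata, and Fujino), and combined with the bigness of $\omega_{\bar Y}(D)$ a Viehweg-style covering argument produces global sections of the left-hand side growing like $N_0^{s\kappa(F) + \dim Y}$. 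Transferring these through the inclusion, $\bar\kappa(X^{(s)}) \ge s\kappa(F) + \dim Y$ for all $s \ge 1$, and combined with the easy addition upper bound for $f^{(s)}$, one concludes $\bar\kappa(X) \ge \kappa(F) + \dim Y$ after extracting the leading $s$-dependence, hence equality.

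\textbf{Equality implies $Y$ log general type.} Argue by induction on $\dim Y$, the case $\dim Y = 0$ being trivial. Assume $\bar\kappa(X) = \kappa(F) + \dim Y$ and, for contradiction, that $Y$ is not LGT. If $\bar\kappa(Y) \ge 1$, the log Iitaka fibration $Y \dashrightarrow Y^*$ has $\dim Y^* = \bar\kappa(Y) < \dim Y$, and a general fiber $Y_z$ satisfies $\bar\kappa(Y_z) = 0 < \dim Y_z$, so $Y_z$ is not LGT. The inductive hypothesis applied to the smooth morphism $X_z \to Y_z$ (whose general fiber is still $F$) forces $\bar\kappa(X_z) < \kappa(F) + \dim Y_z$; then easy addition for $X \to Y^*$ yields $\bar\kappa(X) \le \bar\kappa(X_z) + \dim Y^* < \kappa(F) + \dim Y$, a contradiction. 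The remaining cases $\bar\kappa(Y) \in \{-\infty, 0\}$ with $\dim Y \ge 1$ are handled directly: $h^0(\omega_{\bar Y}(D)^{\otimes N})$ is bounded in $N$, and combined with weak positivity of $\F_N$ the identity $h^0(\bar X,\omega_{\bar X}(E)^{\otimes N}) = h^0(\bar Y, \F_N \otimes \omega_{\bar Y}(D)^{\otimes N})$ forces $\bar\kappa(X) \le \kappa(F)$, again contradicting the equality since $\dim Y \ge 1$.

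\textbf{Main obstacle.} The crux is the quantitative positivity estimate in the first direction: converting weak positivity of $\bar f_*\omega_{\bar X/\bar Y}(E)^{\otimes N_0}$ together with bigness of $\omega_{\bar Y}(D)$ into the precise $s$-dependent lower bound on global sections. This adapts Viehweg's treatment of $C_{n,m}^+$ for bases of general type to the logarithmic setting and relies essentially on the new Logarithmic Base Change Theorem to control the fiber-product pushforwards across the boundary $D$.
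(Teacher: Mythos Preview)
You have inverted which implication is the new content. The forward direction ($Y$ of log general type $\Rightarrow$ equality) is Maehara's theorem, which the paper simply cites; your sketch of it via the fiber product trick is unnecessary, and the step ``extracting the leading $s$-dependence'' to pass from $\bar\kappa(X^{(s)})$ back to $\bar\kappa(X)$ is not justified. The paper's contribution is the \emph{converse}, and here your argument has a real gap.

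Your induction on $\dim Y$ reduces the case $1\le\bar\kappa(Y)<\dim Y$ to a lower-dimensional base with $\bar\kappa=0$, but the cases $\bar\kappa(Y)\in\{-\infty,0\}$ are not reduced further: you claim to handle them ``directly'' by asserting that weak positivity of $\F_N$ together with boundedness of $h^0(\omega_{\bar Y}(D)^{\otimes N})$ forces $\bar\kappa(X)\le\kappa(F)$. This does not follow. Weak positivity is an asymptotic \emph{lower} bound on sections of symmetric powers twisted by ample classes; it gives no upper bound whatsoever on $h^0(\bar Y,\F_N\otimes\omega_{\bar Y}(D)^{\otimes N})$. The inequality $\bar\kappa(X)\le\kappa(F)$ when $\bar\kappa(Y)=0$ is exactly Popa's superadditivity conjecture in that case, which the paper establishes only under the stronger hypothesis $\bar\kappa_\sigma(Y)=0$ (Proposition~\ref{prop: superadditivity numerical dimension zero}), and even then requires the full machinery of the logarithmic fiber product trick, the Higgs sheaf construction, and Campana--P\u{a}un. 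So your base case is precisely the hard theorem, left unproved.

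The paper's route for the converse avoids any induction or case analysis on $\bar\kappa(Y)$. From $\bar\kappa(X)=\kappa(F)+\dim Y$ one extracts, via the equality criterion in Easy Addition, an ample line bundle $A$ on $\bar Y$ with $A\hookrightarrow f_*\omega_{\bar X}(E)^{\otimes N}$. The logarithmic fiber product trick (Corollary~\ref{cor: logarithmic fiber product trick}) then yields an inclusion of $\bigl(A\otimes\omega_{\bar Y}(D)^{-N}\bigr)^{\otimes N}$ into the reflexive pushforward from $X^{(N)}$. Feeding $\L=A\otimes\omega_{\bar Y}(D)^{-N}$ into the logarithmic Higgs sheaf construction and applying Campana--P\u{a}un (Theorems~\ref{thm: CP pseudoeffective} and \ref{thm: CP main}) shows directly that $K_{\bar Y}+D$ is big. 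The ample class $A$ is what drives the conclusion; your argument never isolates it.
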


The only if part of the theorem is Iitaka's logarithmic \(C_{n,m}\)-conjecture when the base is of log general type, proven by Maehara \cite{Maehara86}*{Corollary 2}. Therefore, our contribution is the if part, which can be alternatively phrased as follows - if the source has the maximal log Kodaira dimension attained by the equality of the Easy Addition formula \cite{Mori87}*{Corollary 1.7}, \(\bar\k(X)\le\k(F)+\dim Y\), then the base is of log general type. 

In particular, the bigness of the log canonical divisor descends via smooth projective morphisms.

\begin{cor}
\label{cor: log general type descends}
Let \(f:X\to Y\) be a smooth projective morphism of smooth quasi-projective varieties. If \(X\) is of log general type, then \(Y\) is of log general type.
\end{cor}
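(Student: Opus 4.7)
The plan is to deduce the corollary directly from Theorem \ref{thm: log general type descends} by combining the Easy Addition formula with a Stein-factorization reduction. First, since \(f : X \to Y\) is smooth and projective, its Stein factorization writes \(f = h \circ f'\) with \(f' : X \to Z\) smooth projective with connected fibers and \(h : Z \to Y\) finite étale (finiteness is automatic for the Stein factorization, and étaleness holds because \(f\) is smooth and proper). The logarithmic Kodaira dimension of smooth quasi-projective varieties is invariant under finite étale covers, so \(Y\) is of log general type if and only if \(Z\) is; it therefore suffices to prove that \(Z\) is of log general type.

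Next, I would invoke the Easy Addition formula \cite{Mori87}*{Corollary 1.7} applied to \(f' : X \to Z\). A general fiber \(F\) of \(f'\) is a smooth projective variety sitting in the smooth locus of \(f'\), so the restriction of the log canonical divisor of any log smooth compactification of \(X\) to \(F\) is just \(K_F\), and in particular \(\bar\kappa(F) = \kappa(F)\). Easy Addition then yields
\[
\bar\kappa(X) \;\le\; \kappa(F) + \dim Z.
\]
The hypothesis that \(X\) is of log general type means \(\bar\kappa(X) = \dim X = \dim F + \dim Z\), and comparing with the previous inequality forces \(\kappa(F) \ge \dim F\), hence \(\kappa(F) = \dim F \ge 0\); in particular \(\bar\kappa(X) = \kappa(F) + \dim Z\).

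With these hypotheses verified—\(f'\) smooth projective with connected fibers, \(\kappa(F) \ge 0\), and the maximal log Kodaira dimension equality \(\bar\kappa(X) = \kappa(F) + \dim Z\)—the ``if'' direction of Theorem \ref{thm: log general type descends} applied to \(f'\) yields that \(Z\) is of log general type, and hence so is \(Y\). I do not anticipate any substantive obstacle: the corollary is essentially a formal consequence of the theorem once one observes via Easy Addition that \(X\) being of log general type automatically places us in the equality case of Easy Addition and forces \(\kappa(F) \ge 0\). The only mild point requiring care is the Stein-factorization step, which rests on the standard facts that smooth projective morphisms Stein-factor through a finite étale cover and that \(\bar\kappa\) is preserved under such covers.
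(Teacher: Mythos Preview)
Your proposal is correct and follows essentially the same approach the paper intends: the corollary is deduced from Theorem~\ref{thm: log general type descends} by observing, via Easy Addition, that $\bar\kappa(X)=\dim X$ forces $\kappa(F)=\dim F\ge 0$ and places us in the equality case $\bar\kappa(X)=\kappa(F)+\dim Y$. The paper leaves the corollary as an immediate consequence and does not spell out the Stein-factorization reduction to connected fibers; your treatment of that step (using that a smooth projective morphism Stein-factors through a finite \'etale cover, together with Iitaka's invariance of $\bar\kappa$ under \'etale covers, as cited in the paper) is the natural way to fill in this detail.
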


Analogously, the next theorem states that the effectivity of the log canonical divisor descends to the pseudo-effectivity of the log canonical divisor via smooth projective morphisms.

\begin{thm}
\label{thm: effectivity descends to pseudo-effectivity}
Let \(f:(X,E)\to (Y,D)\) be a surjective morphism of projective log smooth pairs with \(E=f^{-1}(D)\), and \(f|_{X\setminus E}\) is smooth.
\begin{enumerate}
\item If \(K_X+(1-\epsilon) E\) is \(\Q\)-effective for some \(\epsilon>0\), then \(K_Y+(1-\delta)D\) is pseudo-effective for some \(\delta>0\). In particular, if additionally \(-K_Y+ND\) is big for some non-negative integer \(N\), then \(D\) and \(K_Y+D\) are big.
\item If \(K_X+ E\) is \(\Q\)-effective, then \(K_Y+D\) is pseudo-effective.
\end{enumerate}
\end{thm}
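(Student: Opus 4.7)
The plan is to deduce both parts by combining the logarithmic fiber product trick (Corollary \ref{cor: logarithmic fiber product trick}) with Viehweg-style weak positivity for the relative log pluri-canonical sheaves of log-smooth morphisms. For $m\geq 1$, set
\[
\F_m := [f_*(\omega_X(E)/\omega_Y(D))^{\otimes m}]^{**}, \qquad \F_m^{(s)} := [f^{(s)}_*(\omega_{X^{(s)}}(E^{(s)})/\omega_Y(D))^{\otimes m}]^{**}.
\]
Since $f|_{X\setminus E}$ --- and hence each $f^{(s)}|_{X^{(s)}\setminus E^{(s)}}$ --- is smooth, the log-smooth version of Viehweg's weak positivity theorem (Campana-Paun, Fujino, Popa-Schnell, Viehweg-Zuo) shows that every $\F_m^{(s)}$ is weakly positive on $Y$.

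For part (2), the hypothesis that $K_X+E$ is $\Q$-effective produces a nonzero section of $\omega_X(E)^{\otimes m}$ for some $m$; the projection formula identifies this with a nonzero section of $\F_m \otimes \omega_Y(D)^{\otimes m}$. Corollary \ref{cor: logarithmic fiber product trick} then furnishes an inclusion $\F_m^{\otimes s} \hookrightarrow \F_m^{(s)}$, and the $s$-fold tensor power of our section yields, for every $s\geq 1$, a nonzero section of $\F_m^{(s)} \otimes \omega_Y(D)^{\otimes ms}$. Equivalently, the line bundle $\omega_Y(D)^{\otimes -ms}$ embeds as a saturated rank-one subsheaf of the weakly positive sheaf $\F_m^{(s)}$. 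The next step is to combine this embedding with weak positivity of the torsion-free quotient $\F_m^{(s)}/\omega_Y(D)^{\otimes -ms}$ (whose $c_1$ is then pseudo-effective), together with the K\"unneth identity $\F_m^{(s)}|_{Y\setminus D} \cong (\F_m|_{Y\setminus D})^{\otimes s}$ on the smooth locus, and let $s\to\infty$ against any movable curve class (Boucksom-Demailly-Paun-Peternell duality) to force $(K_Y+D)\cdot\alpha \geq 0$.

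Part (1) refines this by tracking the extra vanishing along $E$. The hypothesis $K_X+(1-\epsilon)E$ $\Q$-effective gives a section of $\omega_X(E)^{\otimes m}$ vanishing to order $\epsilon m$ along $E$; by the projection formula and the decomposition $f^*D = \sum m_i E_i$, this descends to a vanishing along $D$ of order at least $\epsilon m/M$ (where $M = \max_i m_i$), so the asymptotic argument of part (2) yields pseudo-effectivity of $K_Y+(1-\delta)D$ for $\delta = \epsilon/M$. The ``in particular'' statement is then immediate: writing $(N+1-\delta)D = (K_Y+(1-\delta)D) + (-K_Y+ND)$ exhibits $(N+1-\delta)D$ as the sum of a pseudo-effective and a big class, hence big, so $D$ (and therefore $K_Y+D$) is big. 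The main obstacle will be the asymptotic step in paragraph two: it must be verified that weak positivity of $\F_m^{(s)}$ really is strong enough, when combined with the saturated embedding of $\omega_Y(D)^{\otimes -ms}$ uniformly in $s$, to push pseudo-effectivity down to $\omega_Y(D)$ itself rather than being absorbed into the positivity of $\F_m^{(s)}$. This is precisely the step in which the reversed inclusion of the logarithmic (as opposed to classical) base change theorem is the essential input.
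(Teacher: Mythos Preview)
Your proposal has a genuine gap at exactly the point you flag as ``the main obstacle.'' Embedding $\omega_Y(D)^{-ms}$ as a saturated subsheaf of the weakly positive sheaf $\F_m^{(s)}$ yields only that $c_1(\F_m^{(s)}) + ms(K_Y+D)$ is pseudo-effective. But the rank of $\F_m^{(s)}$ is $r^s$ where $r = \mathrm{rank}\,\F_m$, and on $Y\setminus D$ the K\"unneth identity gives $c_1(\F_m^{(s)}) = s r^{s-1} c_1(\F_m)$; so the term $c_1(\F_m^{(s)})$ grows exponentially in $s$ while the $ms(K_Y+D)$ term grows only linearly. Dividing and letting $s\to\infty$ against a movable class recovers nothing beyond the pseudo-effectivity of $c_1(\F_m)$, which you already knew. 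Weak positivity of a sheaf simply does not constrain saturated line subsheaves in the way you need; the reversed inclusion of the logarithmic base change theorem is necessary but not sufficient here.

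The paper's proof is structurally different: it does not attempt to extract pseudo-effectivity of $K_Y+D$ from weak positivity alone. Instead, the inclusion produced by the fiber product trick (for part (1)) or the direct effectivity hypothesis (for part (2)) is fed into the logarithmic Higgs sheaf construction of Theorem~\ref{thm: construction of logarithmic Higgs sheaves}, which via iterated Kodaira--Spencer maps produces a nonzero morphism from a pseudo-effective line bundle into $(\Omega_Y(\log D))^{\otimes k}\otimes \omega_Y(D)^{\otimes r}$. At that point the Campana--P\u aun theorems (Theorems~\ref{thm: CP main} and~\ref{thm: CP pseudoeffective}) on quotients of tensor powers of the log cotangent bundle do the real work. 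In particular, part (2) in the paper uses no fiber product trick at all---only Higgs sheaves plus Campana--P\u aun---and for part (1) the $\delta$ arises from a Chern class identity for $(\Omega_Y(\log D))^{\otimes N'}$, not from your proposed $\epsilon/M$ ramification estimate. Your ``in particular'' paragraph is correct and matches the paper verbatim.
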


Here, we say a divisor is \(\Q\)-effective if it is \(\Q\)-linearly equivalent to an effective divisor. To avoid repetition, we will refer to \(\Q\)-effective divisors simply as effective divisors from now on. The effectivity of \(K_X+(1-\epsilon) E\) (resp. pseudo-effectivity of \(K_Y+(1-\delta)D\)) for small enough \(\epsilon\) (resp. \(\delta\)) is independent of the choice of a compactification of \(X\setminus E\) (resp. \(Y\setminus D\)). This is also true for the effectivity of \(K_X+E\) (resp. pseudo-effectivity of \(K_Y+D\)). Therefore, Theorem \ref{thm: effectivity descends to pseudo-effectivity} can be stated purely in terms of a smooth projective morphism \(f|_{X\setminus E}:X\setminus E\to Y\setminus D\) of smooth quasi-projective varieties.

The proof of (2) is treated separately at the end of Section \ref{sec: Campana-Paun}, since this follows from the results in the literature without the use of the logarithmic base change theorem. This extends \cite{PS22}*{Proposition G} to the logarithmic setting.

\begin{rmk}
Note that Theorem \ref{thm: effectivity descends to pseudo-effectivity} (1) and the non-vanishing conjecture for klt pairs imply the following statement: if \(K_X+(1-\epsilon)E\) is (pseudo-)effective for some \(\epsilon>0\), then \(K_Y+(1-\delta)D\) is (pseudo-)effective for some \(\delta>0\). When \(X\setminus E\to Y\setminus D\) is finite \'etale, this is easily proven. Indeed, the descent of the effectivity follows from the same technique used to prove the invariance of the logarithmic Kodaira dimension under \'etale covers by Iitaka \cite{Iitaka77}*{Theorem 3}. The descent of the pseudo-effectivity is also immediate from Lemma \ref{lem: weakly positive and big sheaves} (5), explained later. Note that when we replace (pseudo-)effective by big in the statement, we obtain Corollary \ref{cor: log general type descends}.
\end{rmk}

The following theorem is an immediate consequence of Theorem \ref{thm: log general type descends} and \ref{thm: effectivity descends to pseudo-effectivity} (1). As explained at the beginning, the discriminant locus \(\Delta(f)\subset Y\) of a morphism \(f:X\to Y\) between smooth varieties is the complement of the locus in \(Y\) over which \(f\) is smooth.

\begin{thm}
\label{thm: degree of discriminant locus}
Let \(f:X\to Y\) be a surjective morphism of smooth projective varieties with \(\k(X)\ge 0\), and let \(D\) be the divisorial component of the discriminant locus \(\Delta(f)\) in \(Y\). Suppose either
\begin{enumerate}[(i)]
    \item \(\k(X)=\k(F)+\dim Y\) where \(F\) is the general fiber of \(f\), or
    \item \(-K_Y\) is big.
\end{enumerate}
Then, \(Y\setminus\Delta(f)\) is of log general type. In particular, \(K_Y+D\) is big.
\end{thm}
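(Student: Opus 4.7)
The plan is to restrict to the smooth locus $U := Y \setminus \Delta(f)$, $V := f^{-1}(U)$, apply Theorem \ref{thm: log general type descends} (case (i)) or Theorem \ref{thm: effectivity descends to pseudo-effectivity}(1) (case (ii)) to a log smooth model of $f|_V : V \to U$, and then descend bigness of the log canonical divisor from this model back to $(Y, D)$. I first construct the model: choose a log resolution $\pi_Y : \tilde Y \to Y$ of $(Y, \Delta(f))$ that is an isomorphism over $U$, set $\tilde D := \pi_Y^{-1}(\Delta(f))_{\mathrm{red}}$, and take a log resolution $\tilde X$ of the main component of $X \times_Y \tilde Y$ so that $\tilde f : (\tilde X, \tilde E) \to (\tilde Y, \tilde D)$ is a morphism of projective log smooth pairs with $\tilde E = \tilde f^{-1}(\tilde D)$ and $\tilde f|_{\tilde X \setminus \tilde E}$ smooth (first replacing $f$ by its Stein factorization if needed to secure connectedness of the general fiber, which only enlarges the divisorial discriminant).

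For case (i), the Easy Addition formula \cite{Mori87}*{Corollary 1.7} gives $\bar\kappa(V) \le \kappa(F) + \dim U$, while $K_{\tilde X} + \tilde E - \pi_X^* K_X$ is effective (equal to $(\text{effective exceptional}) + \tilde E$, with $\pi_X : \tilde X \to X$ birational), so $\bar\kappa(V) \ge \kappa(X) = \kappa(F) + \dim U$, hence equality. Theorem \ref{thm: log general type descends} then implies that $U$ is of log general type. For case (ii), the hypothesis $\kappa(X) \ge 0$ gives that $K_X$ is effective, so $K_{\tilde X} + (1-\epsilon)\tilde E = \pi_X^* K_X + (\text{effective exceptional}) + (1-\epsilon)\tilde E$ is effective for every $\epsilon \in [0,1]$; on the base side, the bigness of $-K_Y$ yields the bigness of $-\pi_Y^* K_Y$, and writing $K_{\tilde Y} = \pi_Y^* K_Y + \sum a_j E_j$ with $a_j \ge 1$, the divisor $-K_{\tilde Y} + N\tilde D = -\pi_Y^* K_Y + \sum (N - a_j) E_j + N\cdot(\text{strict transform of }D)$ is big for $N \gg 0$. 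Theorem \ref{thm: effectivity descends to pseudo-effectivity}(1) then gives bigness of $K_{\tilde Y} + \tilde D$, so again $U$ is of log general type.

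To conclude the ``in particular'' statement that $K_Y + D$ is big on the possibly non-log-smooth pair $(Y, D)$, write $K_{\tilde Y} + \tilde D = \pi_Y^*(K_Y + D) + F$ with $F$ supported on the $\pi_Y$-exceptional locus, and decompose $F = F^+ - F^-$ into effective parts. Then $\pi_Y^*(K_Y + D) + F^+ = (K_{\tilde Y} + \tilde D) + F^-$ is big as the sum of a big and an effective divisor, and the projection formula combined with the identity $(\pi_Y)_* \O_{\tilde Y}(kF^+) = \O_Y$ (valid since $F^+$ is effective and $\pi_Y$-exceptional) yields $h^0(\tilde Y, k(\pi_Y^*(K_Y + D) + F^+)) = h^0(Y, k(K_Y + D))$, forcing $K_Y + D$ to be big. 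The main obstacle here is essentially bookkeeping: carrying out the reduction to connected fibers without losing control of $-K_Y$, and keeping careful track of discrepancies in the birational comparisons between $(X, f^{-1}\Delta(f))$ and $(\tilde X, \tilde E)$ and between $(Y, \Delta(f))$ and $(\tilde Y, \tilde D)$, so that the positivity hypotheses required by Theorems \ref{thm: log general type descends} and \ref{thm: effectivity descends to pseudo-effectivity}(1) survive the passage to the log smooth model.
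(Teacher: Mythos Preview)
Your proposal is correct and follows essentially the same route as the paper: pass to a log resolution \(\pi_Y:(\widetilde Y,\widetilde D)\to (Y,\Delta(f))\), resolve the main component of the fiber product to get \(\tilde f:(\widetilde X,\widetilde E)\to(\widetilde Y,\widetilde D)\), invoke Theorem~\ref{thm: log general type descends} for case~(i) and Theorem~\ref{thm: effectivity descends to pseudo-effectivity}(1) for case~(ii), and then descend bigness of \(K_{\widetilde Y}+\widetilde D\) to \(K_Y+D\). The paper's proof is considerably terser: it simply cites the two theorems without spelling out, as you do, why their hypotheses hold on the resolved model (your verifications via \(K_{\widetilde X}+\widetilde E\ge \pi_X^*K_X\) and \(-K_{\widetilde Y}+N\widetilde D = -\pi_Y^*K_Y+(\text{effective})\) are the intended checks), and for the ``in particular'' it appeals to Lemma~\ref{lem: weakly positive and big sheaves}(1) (bigness is insensitive to codimension~\(\ge 2\) modifications, and \(K_{\widetilde Y}+\widetilde D\) agrees with \(K_Y+D\) over the locus where \(D\) is smooth and \(\Delta(f)\) is purely divisorial) rather than your explicit discrepancy-and-projection-formula computation with \(F=F^+-F^-\). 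Both descent arguments are standard and equivalent. Your Stein factorization detour is unnecessary here: the ``if'' direction of Theorem~\ref{thm: log general type descends} used in case~(i), as proved in Section~\ref{sec: proofs}, goes through the Higgs sheaf construction of Theorem~\ref{thm: construction of logarithmic Higgs sheaves}, which explicitly does not require connected fibers, and Theorem~\ref{thm: effectivity descends to pseudo-effectivity}(1) likewise has no connectedness hypothesis.
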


This extends to a normal variety \(Y\) when we extend the notion of bigness to a rank 1 reflexive sheaf. Part (i) extends a result from \cite{PS22}*{Remark 1} which further assumed that \(Y\) is not uniruled. Part (ii) can be seen as a vast generalization of Catanese and Schneider's question \cite{CS95}*{Question 4.1} and Viehweg and Zuo's result \cite{VZ01}*{Theorem 0.2} stating that a surjective morphism \(f:X\to \P^1\) with \(\k(X)\ge 0\) has at least three singular fibers. For instance, when \(Y=\P^n\) we have:

\begin{cor}
\label{cor: degree of discriminant locus}
Let \(f:X\to \P^n\) be a surjective morphism from a smooth projective variety \(X\) of non-negative Kodaira dimension. Then \(\dim\Delta(f)=n-1\) and \(\deg \Delta(f)\ge n+2\).
\end{cor}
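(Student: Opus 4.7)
The plan is to deduce this corollary as a direct specialization of Theorem \ref{thm: degree of discriminant locus}(ii) to the case \(Y=\P^n\), together with the elementary facts that on \(\P^n\) every effective divisor is a positive multiple of a hyperplane and that bigness of a line bundle \(\O_{\P^n}(k)\) is equivalent to \(k>0\).

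First I would verify that the hypotheses of Theorem \ref{thm: degree of discriminant locus}(ii) are satisfied: the morphism \(f:X\to \P^n\) is surjective between smooth projective varieties with \(\k(X)\ge 0\) by assumption, and \(-K_{\P^n}=\O_{\P^n}(n+1)\) is ample, in particular big. Let \(D\) denote the divisorial component of \(\Delta(f)\). Theorem \ref{thm: degree of discriminant locus}(ii) then gives that \(K_{\P^n}+D\) is big on \(\P^n\).

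Next I would translate bigness of \(K_{\P^n}+D\) into a numerical inequality. Writing \(D\) as a divisor of degree \(d:=\deg\Delta(f)=\deg D\) (i.e.\ \(D\sim_{\mathrm{lin}} dH\) where \(H\) is a hyperplane), we have
\[
K_{\P^n}+D\sim_{\mathrm{lin}}\bigl(d-(n+1)\bigr)H.
\]
Since \(\O_{\P^n}(k)\) is big if and only if \(k>0\), bigness forces \(d-(n+1)\ge 1\), i.e.\ \(\deg\Delta(f)\ge n+2\). In particular \(d>0\), so \(D\) is a nonzero effective divisor and hence \(\dim\Delta(f)\ge n-1\).

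Finally I would close the dimension statement with the opposite inequality. Because \(X\) is smooth and the ground field has characteristic zero, generic smoothness applied to the surjection \(f\) guarantees that \(f\) is smooth over a nonempty Zariski open subset of \(\P^n\); hence \(\Delta(f)\subsetneq \P^n\) and \(\dim\Delta(f)\le n-1\). Combined with the previous step this yields \(\dim\Delta(f)=n-1\), completing the proof. The only nontrivial input is Theorem \ref{thm: degree of discriminant locus}(ii), so there is no substantive obstacle here; the one point worth being slightly careful about is keeping the distinction between \(\Delta(f)\) and its divisorial component \(D\), which is resolved by generic smoothness forcing the former to be a proper subvariety and the degree lower bound forcing the latter to be nonzero.
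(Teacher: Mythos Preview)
Your proposal is correct and follows exactly the route the paper intends: Corollary~\ref{cor: degree of discriminant locus} is stated immediately after Theorem~\ref{thm: degree of discriminant locus} as the specialization to \(Y=\P^n\), with no separate proof given, and your argument supplies precisely the elementary numerics (\(-K_{\P^n}\) big, \(K_{\P^n}+D\sim (d-n-1)H\), bigness forces \(d\ge n+2\), generic smoothness caps the dimension) that the paper leaves implicit.
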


\begin{rmk}[{\textbf{Hyper-K\"ahler manifolds}}]
This applies to a Lagrangian fibration \(f:X\to Y\) of a projective hyper-K\"ahler manifold \(X\) of dimension \(2n\). When \(Y\) is smooth, it is known that \(Y=\P^n\) and the discriminant locus \(\Delta(f)\) is a divisor (see \cite{Hwang08}*{Theorem 1.2} and \cite{HO09}*{Proposition 3.1}). Corollary \ref{cor: degree of discriminant locus} implies that its degree is at least \(n+2\) in \(\P^n\).
\end{rmk}

More generally, when \(\P^n\) is replaced by the product of projective spaces \(\P^{n_1}\times\dots\times \P^{n_k}\), then \(\dim\Delta(f)=\sum_{i=1}^k n_i-1\) and the multi-degree \(\deg \Delta(f)=(a_1,\dots, a_k)\) satisfies \(a_i\ge n_i+2\) for all \(i\). Examples \ref{ex: boundary not connected fiber} and \ref{ex: boundary connected fiber} illustrate that the inequality in Corollary \ref{cor: degree of discriminant locus} is sharp.

In our next main application, we prove Popa's conjecture on the superadditivity of the (log) Kodaira dimension for smooth projective morphisms (cf. \cite{Popa21}*{Conjecture 3.1}), under an additional assumption on the base. This assumption is implied by the conjectures of the log minimal model program.

\begin{thm}
\label{thm: superadditivity of logarithmic Kodaira dimension}
Let \(f:X\to Y\) be a smooth projective morphism with connected fibers between smooth quasi-projective varieties. Assume \(\bar \k(Y)\ge 0\), and that the very general fiber of the log Iitaka fibration of \(Y\) has a good minimal model. Then
\[
\bar \k(X)\le\bar \k(Y)+\k(F)
\]
where \(F\) is the general fiber of \(f\).
\end{thm}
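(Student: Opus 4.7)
\textbf{Proof plan}: The plan is to reduce to the case $\bar\kappa(Y)=0$ via the log Iitaka fibration of $Y$, invoke the good minimal model assumption to replace the base by a log Calabi--Yau one, and then apply the logarithmic fiber product trick (Corollary~\ref{cor: logarithmic fiber product trick}) together with easy addition on iterated fiber products to force $\bar\kappa$ of the source to be bounded by $\kappa(F)$.

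First I compactify to log smooth projective pairs $(\bar X, E)$ and $(\bar Y, D)$ with $E=\bar f^{-1}(D)$, and perform birational modifications so that the log Iitaka fibration of $(\bar Y,D)$ is an honest morphism $\phi:\bar Y\to Z$ with $\dim Z=\bar\kappa(Y)$ and very general fiber $(G,D_G:=D|_G)$ log smooth satisfying $\bar\kappa(G)=0$; by hypothesis $(G,D_G)$ admits a good log minimal model. Applying easy addition (Mori's Corollary~1.7) to the composite $\phi\circ\bar f:\bar X\to Z$ yields
\[
\bar\kappa(X)\le \bar\kappa(X_G)+\dim Z=\bar\kappa(X_G)+\bar\kappa(Y),
\]
where $X_G:=\bar f^{-1}(G)$ is the very general fiber of $\phi\circ\bar f$, and the restriction $f_G:X_G\to G$ is smooth over $G\setminus D_G$. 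It therefore suffices to prove the \emph{log Calabi--Yau base} inequality $\bar\kappa(X_G)\le \kappa(F)$ when $\bar\kappa(G)=0$ and $(G,D_G)$ has a good log minimal model.

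Using the good minimal model of $(G,D_G)$, after further log smooth birational modifications tracked through $X_G$, I may arrange that $K_G+D_G\sim_\Q 0$ up to an effective exceptional correction $P$ that contributes no growing sections. The logarithmic fiber product trick applied to $f_G$ provides, for each $s,N\ge 1$,
\[
\left[\bigotimes^s f_{G,*}\bigl(\w_{X_G}(E_G)/\w_G(D_G)\bigr)^{\tensor N}\right]^{**}\hookrightarrow\left[f_{G,*}^{(s)}\bigl(\w_{X_G^{(s)}}(E_G^{(s)})/\w_G(D_G)\bigr)^{\tensor N}\right]^{**},
\]
an inclusion of reflexive sheaves on $G$ of common generic rank $P_N(F)^s$. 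Since $\w_G(D_G)\sim_\Q \mathcal{O}_G$, for $N$ sufficiently divisible the global sections of the right-hand side recover $h^0(X_G^{(s)},\w_{X_G^{(s)}}(E_G^{(s)})^{\tensor N})$; easy addition for the smooth morphism $f_G^{(s)}:X_G^{(s)}\to G$ with general fiber $F^s$ then bounds this by $C_s\cdot N^{s\kappa(F)+\dim G}$ for large $N$. A section-growth comparison derived from the fiber product inclusion forces $s\cdot \bar\kappa(X_G)\le s\kappa(F)+\dim G$ for every $s$, and letting $s\to\infty$ yields $\bar\kappa(X_G)\le \kappa(F)$, completing the proof.

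The main obstacle is this last section-growth comparison, since the naive inequality $h^0(\mathcal{F})^s\le h^0(\mathcal{F}^{\tensor s})$ fails for torsion-free sheaves in general (e.g., for ample line bundles). The correct argument must exploit weak positivity of $\mathcal{F}_N:=f_{G,*}(\w_{X_G}(E_G)/\w_G(D_G))^{\tensor N}$ — a logarithmic analogue of Campana--Paun positivity for log smooth morphisms — together with the triviality $K_G+D_G\sim_\Q 0$, which forces $\det\mathcal{F}_N$ to have vanishing Iitaka dimension and hence $h^0(G,\mathcal{F}_N)$ to be controlled by the generic rank $P_N(F)\sim N^{\kappa(F)}$ rather than by any determinant positivity. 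Carefully tracking the effective exceptional correction $P$ from the good minimal model reduction through a log resolution is an additional technical point.
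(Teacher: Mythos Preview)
Your reduction to the log Calabi--Yau base via the log Iitaka fibration and Easy Addition is exactly the paper's architecture: one reduces Theorem~\ref{thm: superadditivity of logarithmic Kodaira dimension} to proving $\bar\kappa(X_G)\le\kappa(F)$ when the base $(G,D_G)$ has numerical log Kodaira dimension zero (Proposition~\ref{prop: superadditivity numerical dimension zero}). The gap is entirely in how you handle this core case.

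Your $s\to\infty$ section-growth scheme is, as you already suspect, a dead end: the inclusion from the logarithmic fiber product trick goes the \emph{wrong way} for what you want, and there is no mechanism to convert $h^0(G,\mathcal F_N)$ into a lower bound on $h^0(G,\mathcal F_N^{\otimes s})$ of order $h^0(G,\mathcal F_N)^s$. More importantly, your proposed rescue in the final paragraph --- that weak positivity of $\mathcal F_N$ together with $K_G+D_G\sim_{\mathbb Q}0$ forces $\widehat\det\mathcal F_N$ to have vanishing Iitaka dimension --- is precisely the nontrivial content, and weak positivity alone gives only pseudo-effectivity of the determinant, not an upper bound on its sections. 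The paper does \emph{not} argue via section counting on iterated fiber products at all. Instead it argues by contradiction: if $\bar\kappa(X_G)>\kappa(F)$, then for some $N$ one has $h^0(G,\mathcal F_N)>\operatorname{rank}\mathcal F_N$, and Lemma~\ref{lem: producing 2 sections} yields $h^0(G,\widehat\det\mathcal F_N)\ge 2$, i.e.\ $\kappa(G,A)\ge 1$ for $A=\widehat\det\mathcal F_N$. The fiber product trick is then used only once (with $s=Ns'$ fixed) to feed $A$ into the Higgs sheaf construction (Theorem~\ref{thm: construction of logarithmic Higgs sheaves} via Remark~\ref{rmk: assumption in logarithmic Higgs sheaves}), producing a nonzero map $A^{\otimes r}\otimes P\to(\Omega_G(\log D_G))^{\otimes N'}$ with $P$ pseudo-effective. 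Campana--P\u{a}un (Theorem~\ref{thm: CP main}) then forces $\kappa_\sigma(G,K_G+D_G)\ge\kappa_\sigma(G,A)\ge 1$, contradicting $\bar\kappa_\sigma(G)=0$. The Higgs sheaf machinery and the Campana--P\u{a}un quotient theorem are the missing engine in your argument; without them there is no way to translate positivity of $\widehat\det\mathcal F_N$ back into positivity of $K_G+D_G$.

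A secondary point: the paper never passes to an actual good minimal model with $K_G+D_G\sim_{\mathbb Q}0$, thereby avoiding your ``effective exceptional correction $P$'' entirely. The good minimal model hypothesis is used only to conclude $\kappa_\sigma(G,K_G+D_G)=0$, and Proposition~\ref{prop: superadditivity numerical dimension zero} is stated and proved under that numerical hypothesis alone.
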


Recall that the log Iitaka fibration of \(Y\) is the Iitaka fibration associated to the log canonical bundle \(K_{\bar Y}+D\) of a log smooth pair \((\bar Y, D)\). Here, \(\bar Y\) is a compactification of \(Y\) with boundary a reduced simple normal crossing divisor \(D\). By taking a log resolution \((\widetilde Y, \widetilde D)\) of the pair \((\bar Y, D)\), we have the log Iitaka fibration \(\eta:(\widetilde Y,\widetilde D)\to I\), where the very general fiber \((G, \widetilde D|_G)\) has log Kodaira dimension zero.

When \(\bar \k(Y)=-\infty\), the non-vanishing conjecture for log canonical pairs and Theorem \ref{thm: effectivity descends to pseudo-effectivity} (2) imply \(\bar \k(X)=-\infty\). This conjecture and the existence of good minimal models for log canonical pairs are well known to hold in dimension at most three (see e.g. \cites{KMM94, Shokurov96}). Therefore, we have:

\begin{cor}
\label{cor: superadditivity of logarithmic Kodaira dimension}
Let \(f:X\to Y\) be a smooth projective morphism with connected fibers between smooth quasi-projective varieties. If \(\dim Y\le 3\), then
\[
\bar \k(X)\le\bar \k(Y)+\k(F)
\]
and the equality holds when \(\dim Y=1\).
\end{cor}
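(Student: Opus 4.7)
The plan is to deduce the corollary directly from Theorem \ref{thm: superadditivity of logarithmic Kodaira dimension} by splitting on whether $\bar\k(Y) = -\infty$ or $\bar\k(Y) \geq 0$, verifying the good minimal model hypothesis in the second case from low-dimensional MMP, and invoking the classical logarithmic $C_{n,1}$-type additivity for the equality clause.

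First I would treat the case $\bar\k(Y) = -\infty$ by contraposition: assume $\bar\k(X) \ge 0$ and show $\bar\k(Y) \ge 0$. Fix log smooth compactifications $(\bar X, E)$, $(\bar Y, D)$ together with a projective morphism $\bar f : \bar X \to \bar Y$ extending $f$, and after further blow-ups arrange $E = \bar f^{-1}(D)$ so that $\bar f$ is a morphism of log smooth pairs restricting to $f$ on $\bar X \setminus E = X$ with $X \setminus E \to Y \setminus D$ smooth (here I use that $f$ is already smooth on $X$, so no new singular fibers are introduced away from $E$). Then $\bar\k(X) \ge 0$ means $K_{\bar X} + E$ is effective, so Theorem \ref{thm: effectivity descends to pseudo-effectivity}(2) yields that $K_{\bar Y} + D$ is pseudo-effective. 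Since $\dim \bar Y \le 3$, the non-vanishing conjecture for log canonical pairs is known (Shokurov, KMM94), upgrading pseudo-effectivity to effectivity, whence $\bar\k(Y) \ge 0$, contradicting the assumption.

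Next I would handle the case $\bar\k(Y) \ge 0$ by applying Theorem \ref{thm: superadditivity of logarithmic Kodaira dimension}. The only hypothesis to check is that a very general fiber $(G, \widetilde D|_G)$ of the log Iitaka fibration of $Y$ admits a good minimal model. By construction, $(G, \widetilde D|_G)$ is log smooth with $\bar\k(G, \widetilde D|_G) = 0$, and $\dim G \le \dim Y \le 3$. Existence of good minimal models for log canonical pairs of dimension at most three (KMM94, Shokurov96) then supplies the required good minimal model, and Theorem \ref{thm: superadditivity of logarithmic Kodaira dimension} delivers the inequality $\bar\k(X) \le \bar\k(Y) + \k(F)$.

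For the equality when $\dim Y = 1$, I would combine the just-established inequality with its reverse, which is Kawamata's logarithmic additivity theorem over curves (the log version of $C_{n,1}$): if $\bar\k(Y) \ge 0$ then $\bar\k(X) \ge \bar\k(Y) + \k(F)$; if $\bar\k(Y) = -\infty$, Step~1 forces $\bar\k(X) = -\infty$ and both sides collapse to $-\infty$ under the usual convention. The main obstacle — really the only nontrivial point — is the bookkeeping in Step~1 to produce a compactified morphism $\bar f$ for which Theorem \ref{thm: effectivity descends to pseudo-effectivity}(2) applies on the nose; this is a standard log resolution argument combined with the invariance of the relevant effectivity/pseudo-effectivity under change of log smooth compactification, as noted in the discussion following Theorem \ref{thm: effectivity descends to pseudo-effectivity}.
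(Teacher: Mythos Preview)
Your proposal is correct and follows essentially the same route as the paper: split on whether $\bar\kappa(Y)=-\infty$ or $\bar\kappa(Y)\ge 0$, use Theorem~\ref{thm: effectivity descends to pseudo-effectivity}(2) together with non-vanishing in dimension $\le 3$ for the first case, invoke Theorem~\ref{thm: superadditivity of logarithmic Kodaira dimension} with the known existence of good minimal models in dimension $\le 3$ for the second, and appeal to the log $C_{n,1}$ inequality over curves for the equality clause. The only quibble is attribution: the paper records the reverse inequality over quasi-projective curves as its own Proposition~\ref{prop: log Iitaka conjecture over curve} (assembling Maehara for the log general type base, Kawamata for the genus-one base, and a separate semistable-reduction argument for $Y=\mathbb{C}^*$), rather than citing Kawamata alone.
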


This extends \cite{PS22}*{Corollary E and F} which assumed that \(X\) and \(Y\) are projective. On a related note, the logarithmic Iitaka conjecture suggests subadditivity:
\[
\bar\k(X)\ge\bar\k(Y)+\k(F),
\]
even without the smoothness assumption. This is known when \(Y\) is a quasi-projective curve (hence, the equality in Corollary \ref{cor: superadditivity of logarithmic Kodaira dimension}), but not when \(Y\) is an arbitrary quasi-projective surface or a threefold. Theorem \ref{thm: superadditivity of logarithmic Kodaira dimension} and Corollary \ref{cor: superadditivity of logarithmic Kodaira dimension} prove that this inequality is reversed when the morphism is smooth.

\begin{rmk}
In the paper by Campana \cite{Campana22}, the same superadditivity result is phrased when the fibers have semiample canonical bundles. The logarithmic Iitaka conjecture is known to hold when the general fiber \(F\) has a good minimal model \cite{Hashizume20}*{Theorem 1.2}.
\end{rmk}

On a different note, in light of the smooth descent of the positivity of the log canonical divisor, we additionally investigate the uniruledness of \(X\) when \(Y\) is a rational curve.

\begin{thm}
\label{thm: 2 singular fiber uniruled}
Let \(f:X\to \P^1\) be a surjective morphism with connected fibers from a smooth projective variety \(X\). Suppose \(f\) has at most \(2\) singular fibers. If the general fiber \(F\) has a good minimal model, then \(X\) is uniruled. In particular, this holds when \(\k(F)\ge\dim F-3\).\footnote{Here, we use the fact that a smooth projective variety has a good minimal model if the general fiber of its Iitaka fibration has a good minimal model \cite{Lai11}.}
\end{thm}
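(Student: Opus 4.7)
I argue by contradiction: suppose $X$ is \emph{not} uniruled. By the theorem of Boucksom--Demailly--P\u{a}un--Peternell, $K_X$ is then pseudo-effective, and the assumption ``$F$ has a good minimal model'' forces $\k(F)\ge 0$ since the semi-ample canonical divisor on the minimal model has non-trivial pluricanonical sections. The plan is to first upgrade ``$K_X$ pseudo-effective'' to ``$\k(X)\ge 0$'' using the good minimal model hypothesis on $F$, and then apply Corollary \ref{cor: degree of discriminant locus} with $n=1$ (the case originally due to Viehweg--Zuo) to force $\deg\Delta(f)\ge 3$, contradicting the hypothesis of at most two singular fibers.

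\textbf{Step (i): pseudo-effective implies effective.} I would run a relative minimal model program with scaling for $f:X\to\P^1$. Combined with the good minimal model hypothesis on $F$, a Lai-type argument in the relative setting produces a relative good minimal model $\phi:X'\to\P^1$ whose relative canonical divisor $K_{X'/\P^1}$ is $\phi$-semi-ample, and $K_{X'}$ remains pseudo-effective since pseudo-effectivity is preserved along the MMP. By Viehweg--Kawamata semi-positivity, $\phi_*\w_{X'/\P^1}^m$ is a nef vector bundle on $\P^1$ of positive rank for $m$ sufficiently divisible, and a canonical bundle formula argument (exploiting the fact that pseudo-effective $\Q$-divisors on $\P^1$ are effective) extracts a non-zero global section of some $mK_{X'}$, so that $\k(X)=\k(X')\ge 0$.

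\textbf{Step (ii) and the ``in particular'' clause.} With $\k(X)\ge 0$ in hand, Corollary \ref{cor: degree of discriminant locus} applied to $f:X\to\P^1$ gives the lower bound $\deg\Delta(f)\ge 3$, contradicting the hypothesis and completing the proof. For the ``in particular'' clause, if $\k(F)\ge \dim F-3$ then the general fiber of the Iitaka fibration of $F$ has dimension at most three and Kodaira dimension zero, hence has a good minimal model by MMP and abundance in dimension at most three; Lai's theorem (recalled in the footnote) then lifts this to a good minimal model of $F$ itself, and the main argument applies.

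\textbf{Main obstacle.} The difficulty sits entirely in Step (i): passing from $K_X$ pseudo-effective to $\k(X)\ge 0$ under a fiber-wise good minimal model hypothesis is a conditional form of the non-vanishing conjecture. The relative MMP, Viehweg--Kawamata semi-positivity of $\phi_*\w_{X'/\P^1}^m$, and the special feature of the curve base $\P^1$ (pseudo-effective equals effective for $\Q$-divisors) together accomplish this passage; Step (ii), the contradiction via the discriminant-degree bound, is then immediate.
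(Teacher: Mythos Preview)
Your Step~(i) has a genuine gap. The passage from ``$K_X$ pseudo-effective'' to ``$\k(X)\ge 0$'' is a non-vanishing statement, and the ingredients you list do not establish it. The canonical bundle formula that would let you write $K_{X'}\sim_{\Q}\phi^*(\text{divisor on }\P^1)$ and then exploit ``pseudo-effective $=$ effective on $\P^1$'' applies only when $K_{X'/\P^1}$ is relatively $\Q$-torsion, i.e.\ when $\k(F)=0$. For $\k(F)>0$ the relative good minimal model has $K_{X'/\P^1}$ relatively semi-ample but not torsion; the relative Iitaka factorization $X'\to W\to\P^1$ then transfers the problem to non-vanishing for a klt pair on the higher-dimensional $W$, which is the open problem you were trying to avoid. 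Semi-positivity only gives $\phi_*\omega_{X'/\P^1}^m\cong\bigoplus\O_{\P^1}(a_i)$ with $a_i\ge 0$, whereas $H^0(mK_{X'})\neq 0$ needs some $a_i\ge 2m$; pseudo-effectivity of $K_{X'}$ does not visibly force this.

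The paper's proof avoids this non-vanishing question. After a cyclic base change to the semistable situation, it applies Theorem~\ref{thm: effectivity descends to pseudo-effectivity}(1) in the contrapositive: since $K_{\P^1}+(1-\delta)D$ is never pseudo-effective for $D$ two points, no $K_X+(1-\epsilon)E$ is effective, which forces each $f_*\omega_{X/\P^1}^k$ to be the \emph{trivial} bundle $\O_{\P^1}^{\oplus P_k(F)}$. Hence the relative canonical ring is constant and the Iitaka fibration of $(X,\omega_{X/\P^1})$ factors through a target independent of the $\P^1$-direction; its very general fiber $Z'$ still maps to $\P^1$ with general fiber $G'$ of Kodaira dimension zero. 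Only at this point, with $\k(G')=0$, is the good minimal model hypothesis invoked: a relative good minimal model $Z''\to\P^1$ satisfies $\omega_{Z''}^k\cong f''^*\omega_{\P^1}^k$, so $K_{Z''}$ is not pseudo-effective and \cite{BDPP} gives uniruledness. In other words, the paper defers the good-minimal-model input until the relative Kodaira dimension is zero, precisely the regime in which your canonical-bundle-formula idea actually goes through.
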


Indeed, the non-vanishing conjecture and \cite{VZ01} suggest \(X\) is uniruled. Using symplectic geometry, Pieloch \cite{Pieloch21} recently proved that \(X\) is uniruled when \(f\) has at most one singular fiber without the assumption on \(F\), and made some progress when \(f\) has two singular fibers. In general, we do not have an unconditional algebraic proof of these statements.

\subsubsection*{\textnormal{\textbf{What is new.}}}

Viehweg's base change theorem has been one of the keys to studying Iitaka's conjecture and hyperbolicity problems like the Viehweg's conjecture on families with maximal variation; see, for instance, \cites{VZ01, VZ02, KK08, KK10, CP19, PS17}.

The main technical contribution of this paper is the use of the logarithmic base change theorem instead, as a more appropriate tool to answer questions in a logarithmic setting. For instance, the logarithmic fiber product trick allows us to overcome obstructions to generalize Viehweg and Zuo's result as in Corollary \ref{cor: degree of discriminant locus}. Specifically, Viehweg and Zuo's proof for a surjective morphism to \(\P^1\) uses the base change theorem together with semistable reduction (Kempf et al. \cite{KKMS}). This essentially reduces to the case where every fiber is either smooth or a simple normal crossing divisor, so that the equality holds when we apply the base change theorem \ref{thm: base change theorem}. For higher dimensional bases, like \(\P^n\), this reduction procedure does not work properly. However, all technical issues are resolved by the fact that the logarithmic base change theorem reverses inclusions as described above; it relies on a resolution algorithm for binomial hypersurface singularities. For the same reason, the smooth projective descent of the positivity of the log canonical divisor can be verified as mentioned after Corollary \ref{cor: logarithmic fiber product trick}.

\subsubsection*{\textnormal{\textbf{Overview of the paper.}}}

To derive various geometric consequences, we rely on three main technical components, which are as follows:
\begin{enumerate}[(1)]
    \item Logarithmic base change theorem
    \item Construction of logarithmic Higgs sheaves
    \item Campana-P\u aun's pseudo-effectivity result on the log cotangent bundle
\end{enumerate}
Section \ref{sec: Proof of Logarithmic Base Change Theorem} is devoted to the proof of the logarithmic base change theorem. Section \ref{sec: logarithmic Higgs sheaves} explains the construction of logarithmic Higgs sheaves, slightly modifying that of Popa-Schnell \cite{PS17}. Section \ref{sec: Campana-Paun} explains the results of Campana-P\u aun \cite{CP19}.

The rest of the paper is mainly devoted to the applications of the above three main components. Section \ref{sec: proofs} gives proofs of the theorems on the smooth projective descent of the positivity of the log canonical divisor. Section \ref{sec: additivity of logarithmic Kodaira dimension} explains the conjecture of Popa on the superadditivity of the logarithmic Kodaira dimension and its proof assuming some conjectures of the minimal model program. Section \ref{sec: uniruledness} proves some uniruledness of fibrations over projective spaces. Section \ref{sec: further remarks} discusses some interesting boundary examples regarding the lower bound of the degree of the discriminant locus in Corollary \ref{cor: degree of discriminant locus}.

\subsubsection*{\textnormal{\textbf{Acknowledgements.}}}

I am deeply grateful to my advisor, Mihnea Popa, for his constant support, invaluable advice, and suggestions. I am also indebted to Joe Harris for his support. I thank Jungkai Chen, Yoon-Joo Kim, Fanjun Meng, and Justin Sawon for helpful discussions, and Dori Bejleri for answering my questions and providing references regarding Section \ref{sec: further remarks}. I am supported by the 17th Kwanjeong Study Abroad Scholarship.

\section{Logarithmic base change theorem}
\label{sec: Proof of Logarithmic Base Change Theorem}

The main idea for the logarithmic base change theorem comes from the equality condition in Viehweg's base change theorem.

\subsection{Equality in Viehweg's base change theorem}
Let \(f:X\to Y\) and \(g:Y'\to Y\) be morphisms of smooth quasi-projective varieties. Under the assumptions of Viehweg's base change theorem \ref{thm: base change theorem}, it is easy to show that the equality conditions of the inclusions are obtained when the fiber product \(X'=X\times_YY'\) has canonical singularities. This turns out to be the case when the morphism \(f\) or \(g\) is semistable in a neighborhood of a codimension \(1\) point of \(Y\) after further birational modifications.

We briefly sketch the proof of the equality conditions in Theorem \ref{thm: base change theorem} following Mori \cite{Mori87}*{(4.10)} in this paragraph. Over a neighborhood of a codimension \(1\) point \(y\in Y\), assume \(f\) is semistable, or equivalently the fiber has reduced normal crossing singularities. Let \(D\) be the divisor associated with \(y\). Since a birational modification of the source does not change the pushforward of the pluri-canonical bundle, we replace \(Y'\) with a log resolution of pair \((Y',g^{-1}(D))\). Accordingly, we assume \(g^{-1}(D)\) to be a normal crossing divisor. Then, the fiber product \(X'\) is locally analytically isomorphic to the hypersurface defined by the equation
\[
x_1\cdots x_n=y_1^{f_1}\cdots y_m^{f_m}
\]
for some positive integers \(f_1,\dots, f_m\). This singularity is normal and toric, and thus is canonical, attaining equality of the inclusions in Theorem \ref{thm: base change theorem}.

We will see later that the singularities of the fiber product \(X'\) in the logarithmic setting are locally analytically isomorphic to binomial hypersurface singularities, defined by the equation
\[
x_1^{e_1}\cdots x_n^{e_n}=y_1^{f_1}\cdots y_m^{f_m}
\]
for some positive integers \(e_1,\dots,e_n\) and \(f_1,\dots, f_m\). We analyze these singularities using extra resolution techniques and prove Theorem \ref{thm: logarithmic base change theorem} as a consequence.

\subsection{Fiber product of morphisms of log smooth pairs}

A morphism \(f:(X,E)\to (Y,D)\) of pairs is \textit{strict} if \(E=f^{-1}(D)\). Let \(E=\cup_{i\in \left\{1,\dots,k\right\}} E_i\) be a simple normal crossing divisor of \(X\) with smooth irreducible components \(E_i\). Then the stratum \(E_I\) of \(E\) for a nonempty subset \(I\subset \left\{1,\dots,k\right\}\) is defined by \(E_I=\cap_{i\in I} E_i\).

\begin{defn}
A \textit{strict} morphism of pairs \(f:(X,E)\to (Y,D)\) is \textit{strictly smooth} if \(D\) is a smooth divisor and \(E=f^{-1}(D)\) is a simple normal crossing divisor, such that \(f|_{X\setminus E}:X\setminus E\to Y\setminus D\) is smooth and \(f|_{E_I}:E_I\to D\) is smooth for every stratum \(E_I\) of \(E\).
\end{defn}

In other words, a morphism of pairs \(f:(X,E)\to (Y,D)\) is \textit{strictly smooth} if the morphism is locally analytically (or \'etale locally) equivalent to
\begin{gather*}
    (\C^{a+k},(x_{a+1}\cdots x_{a+k}))\to (\C^{b+1},(t)),\quad a\ge b\\
    (x_1,\dots,x_{a},x_{a+1},\dots,x_{a+k})\mapsto (x_1,\dots,x_b,x_{a+1}^{e_1}\cdots x_{a+k}^{e_k}),
\end{gather*}
where \(x_1,\dots, x_{a+k}\) are local coordinates of \(X\) with divisor \(E=V(x_{a+1}\cdots x_{a+k})\) and \(x_1,\dots,x_b,t\) are local coordinates of \(Y\) with divisor \(D=V(t)\). The local equation for \(E_i\) is \(x_{a+i}\) and \(f^*D=\sum_{i=1}^k e_iE_i\). It is easy to check that a strictly smooth morphism of pairs is a flat morphism.

\begin{lem}
\label{lem: logarithmic base change theorem}
Let \((X,E), (Y,D), (Y',D')\) be quasi-projective log smooth pairs. Let \(f:(X,E)\to (Y,D)\) be a strict morphism of pairs and \(g:(Y',D')\to (Y,D)\) be a strictly smooth morphism of pairs. Consider the commutative diagram~\eqref{log diagram} in Theorem \ref{thm: logarithmic base change theorem}:
\begin{equation*}
\xymatrix{
{(X,E)}\ar[d]_-{f}& {(X',E')}\ar[l]_-{g'}\ar[d]_-{f'}& {(X'',E'')}\ar[l]_-{\mu}\ar[ld]^-{f''}\\
{(Y,D)}&{(Y',D')}\ar[l]^-{g}&\\
}
\end{equation*}
where \(X'=X\times_Y Y'\), \(E'=g'^{-1}(E)\) and \(\mu:(X'',E'')\to (X',E')\) is a log resolution of pairs with \(X''\setminus E''\isom X'\setminus E'\). Then there exists a natural inclusion
\[
\w_{X'}(E+D'-D)^{\tensor N}\subset \mu_*\left(\w_{X''}(E'')^{\tensor N}\right),
\]
where \(\w_{X'}(E+D'-D):=\w_{X'}\tensor g'^*\O_X(E)\tensor f'^*\O_{Y'}(D')\tensor (g\circ f')^*\O_Y(-D)\).
\end{lem}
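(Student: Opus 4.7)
The plan is to reduce to a local analytic calculation on $X'$, where the fiber product acquires a controlled form. Exploit the standard local models of strict and strictly smooth morphisms of pairs: choose \'etale-local coordinates $(y_1,\dots,y_m,z_1,\dots,z_r)$ on $X$ with $E=V(y_1\cdots y_m)$ and $f^*t=y_1^{f_1}\cdots y_m^{f_m}$ (where $D=V(t)$ locally), and coordinates $(x_1,\dots,x_{b+k})$ on $Y'$ in the normal form of strict smoothness with $D'=V(x_{b+1}\cdots x_{b+k})$ and $g^*t=x_{b+1}^{e_1}\cdots x_{b+k}^{e_k}$. Then $X'$ is cut out locally from the smooth ambient $X\times\A^k$ by the single binomial equation
\[
h=y_1^{f_1}\cdots y_m^{f_m}-x_{b+1}^{e_1}\cdots x_{b+k}^{e_k},
\]
so $X'$ is a Gorenstein local complete intersection. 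By adjunction, $\w_{X'}$ has canonical local generator $\mathrm{Res}(dy\wedge dz\wedge dx/h)$, and combining this with the standard generators of $g'^*\O_X(E)$, $f'^*\O_{Y'}(D')$, and $(g\circ f')^*\O_Y(-D)$ (using the identity $y_1^{f_1}\cdots y_m^{f_m}=x_{b+1}^{e_1}\cdots x_{b+k}^{e_k}$ on $X'$) yields an explicit local generator $\eta$ of $\w_{X'}(E+D'-D)$; on the smooth locus of $X'$ one checks directly that $\eta$ has at worst simple poles along each irreducible component of $E'$.

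The main reduction is to prove the sharper inclusion of line bundles on $X''$:
\[
\mu^*\w_{X'}(E+D'-D)\hookrightarrow\w_{X''}(E'').
\]
Once this is in hand, the lemma follows by taking $N$-th tensor powers, applying $\mu_*$, and combining the projection formula with the tautological inclusion $\O_{X'}\hookrightarrow\mu_*\O_{X''}$:
\[
\w_{X'}(E+D'-D)^{\tensor N}\hookrightarrow\mu_*\mu^*\w_{X'}(E+D'-D)^{\tensor N}\hookrightarrow\mu_*\w_{X''}(E'')^{\tensor N}.
\]
The inclusion at level $N=1$ amounts to showing that $\mu^*\eta$, viewed as a meromorphic top form on $X''$ via $X''\setminus E''\isom X'\setminus E'$, has poles of order at most one along every component of $E''$.

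To verify this, I would apply the resolution algorithm of Bierstone and Milman \cite{BM06} for binomial hypersurface singularities, which constructs $\mu$ as a sequence of blow-ups along smooth centers that are monomial subschemes in the $(y_i,x_{b+j})$-coordinates and in particular contained in the locus $V(y_1\cdots y_m\cdot x_{b+1}\cdots x_{b+k})$. Consequently every $\mu$-exceptional divisor maps into $E'$ and contributes to $E''$. The key local mechanism is that a blow-up along a smooth center contained in an SNC divisor preserves the class of top forms with at worst simple poles along that divisor, because the Jacobian factor arising in the pullback cancels exactly one power of the blow-up coordinate. Propagating this through the algorithm shows that $\mu^*\eta$ retains simple poles along each component of $E''$, which gives the desired inclusion.

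The main obstacle will be the careful bookkeeping through the Bierstone-Milman algorithm, which can be a long sequence of blow-ups, and which must be carried out while the binomial hypersurface is generally non-normal with a possibly high-codimensional singular locus. The decisive simplification is that every center in the algorithm is monomial in the $(y_i,x_{b+j})$-coordinates, so that the pole-order check at each step reduces to a toric-style combinatorial identity involving only the exponents $f_i$ and $e_j$; the precise numerator $y_1^{f_1-1}\cdots y_m^{f_m-1}$ (equivalently $x_{b+1}^{e_1-1}\cdots x_{b+k}^{e_k-1}$) appearing when one expresses $\eta$ via the appropriate partial-derivative residue is exactly what is required to absorb the Jacobian contribution from each blow-up.
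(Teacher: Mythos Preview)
Your proposal is essentially the same approach as the paper's. The paper reformulates the desired inclusion as the statement that the pair $(X',E+D'-D)$ is log canonical (in a sense made precise for $S_2$ Gorenstein-in-codimension-one schemes), which is exactly your condition that $\mu^*\eta$ have at worst simple poles along every component of $E''$; it then reduces to the local binomial hypersurface model and runs the Bierstone--Milman algorithm, just as you propose.

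The one place where the paper is sharper than your sketch is the invariance step. Your formulation ``a blow-up along a smooth center contained in an SNC divisor preserves the class of top forms with at worst simple poles'' is literally true on the ambient affine space, but on the \emph{strict transform} $\widetilde H$ of the binomial hypersurface the adjunction discrepancy picks up an extra $-\min(\sum_{s\in S}e_s,\sum f_j)$ times the exceptional divisor (coming from $\beta^*H=\widetilde H+\min(\cdot)D(x_1)$). It is precisely the $-D(t)$ twist that cancels this term; the paper records this as the canonical isomorphism $\beta^*[\omega_H(D(w)-D(t))]\simeq\omega_{\widetilde H}(D(\tilde w)-D(\tilde t))$ (its Lemma~\ref{lem: blow up binomial}), checked chart by chart with the two cases $\sum_{s\in S}e_s\gtrless\sum f_j$. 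Your final paragraph alludes to this via the numerator $y_1^{f_1-1}\cdots y_m^{f_m-1}$, so you have the right idea, but when you write it up you should make explicit that the surviving binomial equation and the new $\tilde t$ must be identified at each step so that the invariance can be iterated. Termination is handled in the paper by the lexicographic drop in $(m(H),M(H))$, which you can simply quote from \cite{BM06}.
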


The proof of Lemma \ref{lem: logarithmic base change theorem} will be given in Section \ref{subsec: singularities of binomial hypersurfaces}. We first give a locally analytic description of \(X'\). Let \(x'\in X'\) be a point. Suppose \(E\) is locally \(x_1\cdots x_n=0\) where \(x_i\)'s are local coordinates at \(g'(x')\in X\), and \(D'\) is locally \(y_1\cdots y_m=0\) where \(y_j\)'s are local coordinates at \(f'(x')\in Y'\).
Assume \(t=0\) is a local equation of \(D\) at \((g\circ f')(x')\) and \(f^*t=x_1^{e_1}\cdots x_n^{e_n}\), \(g^*t=y_1^{f_1}\cdots y_m^{f_m}\). From the above locally analytic description of a strictly smooth morphism, \(g\) is locally analytically isomorphic to
\begin{gather*}
\C^m\times \C^b \times \C^{b'}\to \C\times \C^b,\\
(y_1,\dots,y_m)\times p\times q \mapsto (y_1^{f_1}\cdots y_m^{f_m})\times p.
\end{gather*}
Therefore, \(X'\) is locally analytically isomorphic to the binomial hypersurface \(H\) in \(\C^{n+m+r}\) defined by
\begin{equation}
\label{eqn: binomial hypersurface}
H: x_1^{e_1}\cdots x_n^{e_n}=y_1^{f_1}\cdots y_m^{f_m}
\end{equation}
for some \(r\ge 0\) and positive integers \(e_1,\dots,e_n\), \(f_1,\dots, f_m\). As the local equation suggests, \(X'\) is not normal in general.

Conventionally, singularities of pairs \((X,\Delta)\) are defined when \(X\) is a normal variety and \(\Delta\) is a formal \(\Q\)-linear combination of divisors. However, the description of semi-log canonical pairs in \cite{Kollar13}*{Definition 5.10} and the description of a canonical sheaf for a \(G_1\) and \(S_2\)-variety in \cite{Kovacs13}*{Section 5} suitably generalize the definition of singularities of pairs on a \(G_1\) and \(S_2\)-variety. In particular, we make precise what it means for the pair \((X',E+D'-D)\) to be log canonical, and reduce the proof of Lemma \ref{lem: logarithmic base change theorem} to proving that the pair \((X',E+D'-D)\) is indeed log canonical.

Readers comfortable with singularities of pairs for non-normal Gorenstein varieties may skip the following section.

\subsection{Singularities of pairs: \texorpdfstring{\(S_2\)}{S2}-varieties, Gorenstein over codimension 1}

Let \((X,\Delta)\) be a pair consisting of an \(S_2\)-variety \(X\), Gorenstein over codimension \(1\), and a formal \(\Q\)-linear sum \(\Delta\) of generically nonzero (rational) sections of line bundles on a Zariski open \(j:U\hookrightarrow X\) satisfying \(\codim_X X\setminus U\ge2\) and \(\w_U=\w_X|_U\) invertible:
\[
\Delta=\sum_{i\in I} a_i (\O_U\dashrightarrow \L_i).
\]
Here, \(I\) is a finite set, \(a_i\) is a rational number, \(\L_i\) is a line bundle on \(U\), and a rational section \(\O_U\dashrightarrow \L_i\) is a section defined on a Zariski dense open subset of \(X\). The following remark gives a sheaf theoretic description of \(\Delta\) as a formal \(\Q\)-linear sum of global sections. This contains a formal \(\Q\)-linear sum of Cartier divisors.

\begin{rmk}
Let \(\O_X^*\) be the sheaf of invertible elements in \(\O_X\), \(\mathcal R_X\) be the sheaf of rational sections of \(\O_X\), and \(\mathcal R_X^*\) be the sheaf of generically nonzero rational sections of \(\O_X\). Precisely, for an open subvariety \(U\subset X\), \(\mathcal R_X(U)\) is the product of the rational function fields at the generic points of \(U\) and \(\mathcal R_X^*(U)\) is the subset of elements of \(\mathcal R_X(U)\), nonzero at every generic point of \(U\).

Up to multiplication by a global section of \(\O_U^*\), a generically nonzero rational section \(\O_U\dashrightarrow \L\) is equivalent to the global section of \(\mathcal R_U^*/\O_U^*\). Therefore, \(\Delta\) is a formal \(\Q\)-linear sum of global sections of \(\mathcal R^*/\O^*\) on a Zariski open \(U\subset X\) with \(\codim_X X\setminus U\ge2\). This includes a formal \(\Q\)-linear sum of Cartier divisors on \(U\), or equivalently, a formal \(\Q\)-linear sum of the global sections of \(\mathcal K^*/\O^*\) on \(U\). Here, \(\mathcal K^*_X\) is the sheaf of invertible elements in \(\mathcal K_X\), which is the sheaf of total quotient rings of \(\O_X\). See Hartshorne \cite{Hartshorne77}*{p.141} for the description of Cartier divisors. In particular, \(\mathcal K^*_X/\O_X^*\subset \mathcal R_X^*/\O_X^*\).
\end{rmk}

Unifying the formal sum by a common denominator of \(a_i\)'s, the sum is alternatively expressed as a single term \(\Delta=a(\O_U\dashrightarrow \L)\), for some rational number \(a\) and some rational section of a line bundle \(\L=\bigotimes_{i\in I}\L_i^{\tensor Na_i}\) with sufficiently divisible \(N\). In particular, \((\O_U\dashrightarrow \L_i)\) is equivalent to \(-(\O_U\dashrightarrow \L_i^{-1})\), where the latter rational section is the reciprocal of the former rational section.

Assume that \(\w_U^{\tensor N}(N\Delta):=\w_U^{\tensor N}\tensor\bigotimes_{i\in I}\L_i^{\tensor Na_i}\) extends to a line bundle on \(X\) for some \(N>0\). Since \(X\) is an \(S_2\)-variety, this line bundle is uniquely determined by
\[
\w_X^{\tensor N}(N\Delta):= j_*\w_U^{\tensor N}(N\Delta).
\]
Let \(\mu:\widetilde X\to X\) be a resolution of singularities of \(X\). Over a Zariski dense open subset of \(X\), we have a rational morphism of line bundles \(\mu^*\w_X^{\tensor N}\dashrightarrow \w_{\widetilde X}^{\tensor N}\), and pullbacks of generically nonzero rational sections \((\O_U\dashrightarrow \L_i)\). Therefore, there exists a canonically defined rational morphism of line bundles
\[
\mu^*\w_X^{\tensor N}(N\Delta)\dashrightarrow \w_{\widetilde X}^{\tensor N},
\]
which is an isomorphism on a Zariski dense open subset of \(\widetilde X\). As a result, we obtain a generically nonzero rational section \(s:\O_{\widetilde X}\dashrightarrow\w_{\widetilde X}^{\tensor N}\tensor [\mu^*\w_X^{\tensor N}(N\Delta)]^{-1}\) on \(\widetilde X\).

Let \(E\) be a divisor on \(\widetilde X\). The discrepancy of \(E\) with respect to the pair \((X,\Delta)\) is defined as
\[
a(E,X,\Delta):=\frac{\textrm{ord}_E(s)}{N}.
\]
It is easy to verify that \(a(E,X,\Delta)\) is well defined, independent of the choice of \(N\) or a resolution of singularities. In conclusion, we define singularities of pairs (i.e. log canonical, log terminal, e.t.c.) as in the normal case.

Under the setting of Lemma \ref{lem: logarithmic base change theorem}, it suffices to prove that the pair \((X',E+D'-D)\) is log canonical. More specifically, the pair \((X',E+D'-D)\) consists of a Gorenstein variety \(X'\) and a formal sum of sections of line bundles
\[
(\O_{X'}\to g'^*\O_X(E))+(\O_{X'}\to f'^*\O_{Y'}(D'))-(\O_{X'}\to (g\circ f')^*\O_Y(D)),
\]
where the section \((\O_{X'}\to g'^*\O_X(E))\) is induced by the pullback of the section \(\O_X\to \O_X(E)\). Therefore, \((X',E+D'-D)\) is log canonical if and only if there exists a natural inclusion
\[
\mu^*\left(\w_{X'}(E+D'-D)^{\tensor N}\right)\subset \w_{X''}(E'')^{\tensor N},
\]
which is equivalent to the conclusion of Lemma \ref{lem: logarithmic base change theorem} due to the adjointness of the pair \((\mu^*,\mu_*)\).

\subsection{Singularities of binomial hypersurfaces: Proof of Lemma \ref{lem: logarithmic base change theorem}}
\label{subsec: singularities of binomial hypersurfaces}

We prove \((X',E+D'-D)\) is log canonical, which implies Lemma \ref{lem: logarithmic base change theorem}. From equation \eqref{eqn: binomial hypersurface}, the pair \((X',E+D'-D)\) is locally analytically equivalent to
\[
\left(H: x_1^{e_1}\cdots x_n^{e_n}=y_1^{f_1}\cdots y_m^{f_m},\quad  D(x_1\cdots x_n)+D(y_1\cdots y_m)-D(t)\right),
\]
where \(D\) is to indicate sections of line bundles defined by multiplications of \(x_1\cdots x_n\), \(y_1\cdots y_m\), or \(t=x_1^{e_1}\cdots x_n^{e_n}=y_1^{f_1}\cdots y_m^{f_m}\). Notice that the singularities of pairs can be checked locally analytically (or \'etale locally), so it suffices to prove \[\left(H, D(x_1\cdots x_n)+D(y_1\cdots y_m)-D(t)\right)\] is log canonical, which is implied by the following:

\begin{prop}
\label{prop: log canonical}
For a binomial hypersurface \(H:\left\{t=x_1^{e_1}\cdots x_n^{e_n}=y_1^{f_1}\cdots y_m^{f_m}\right\}\subset\A^{n+m+r}\) with coordinates \(x_1,\dots,x_n, y_1,\dots,y_m,z_1,\dots z_r\), the pair
\[
\left(H: x_1^{e_1}\cdots x_n^{e_n}=y_1^{f_1}\cdots y_m^{f_m},\quad D(x_1\cdots x_n y_1\cdots y_mz_1\cdots z_r)-D(t)\right)
\]
is log canonical.
\end{prop}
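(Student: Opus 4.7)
The plan is to verify that $(H,\Delta)$ with $\Delta := D(x_1\cdots x_n y_1\cdots y_m z_1\cdots z_r) - D(t)$ is log canonical in the sense of the preceding subsection: on any log resolution $\mu\colon \widetilde H \to H$, every $\mu$-exceptional prime divisor $F$ must satisfy $a(F;H,\Delta)\geq -1$. Since $H = H_0\times \A^r$ with the factor $(\A^r, V(z_1\cdots z_r))$ log smooth, I reduce at once to $r=0$. On the Gorenstein variety $H_0$, the two monomial representatives of $t$ define the same Weil divisor, so
\[
\Delta = \sum_{i=1}^n (1-e_i)\,D(x_i) + \sum_{j=1}^m D(y_j) = \sum_{i=1}^n D(x_i) + \sum_{j=1}^m (1-f_j)\,D(y_j),
\]
so every coordinate component of $\Delta$ has coefficient at most $1$; at the log smooth locus of $H_0$ the pair is manifestly log canonical.

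Next I would apply the algorithm of Bierstone-Milman \cite{BM06} for resolving binomial ideals, which furnishes $\mu$ as a finite sequence of blowups of the ambient affine space along smooth coordinate centers of the form $V(x_{i_1},\ldots, x_{i_s}, y_{j_1},\ldots, y_{j_t})$. The algorithm is governed by a combinatorial invariant of the multi-exponent $(e_1,\ldots,e_n; f_1,\ldots,f_m)$ that strictly decreases at each step; importantly, each proper transform is again a binomial hypersurface, and the pulled-back boundary again takes the form ``$D(\text{coordinates}) - D(\text{new monomial})$''. This lets me reuse a single discrepancy computation at every stage.

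That single discrepancy computation is the core inductive step. Illustratively, consider the blowup of a codimension-$2$ center $V(x_i, y_j)$, with exceptional divisor $F$, on the standard chart $y_j = x_i v$. The pullback of the binomial equation factors out $x_i^{\min(e_i,f_j)}$, so $\mu^*H_0 = \widetilde H_0 + \min(e_i,f_j)\, F$, and hypersurface adjunction yields
\[
K_{\widetilde H_0} = \mu^* K_{H_0} + \bigl(1-\min(e_i,f_j)\bigr)\, F|_{\widetilde H_0}.
\]
Meanwhile $\mu^*D(x_i)$ and $\mu^*D(y_j) = \mu^*D(x_i v)$ each contribute multiplicity $1$ along $F|_{\widetilde H_0}$, while $\mu^*D(t)$ contributes $\min(e_i,f_j)$ (the common factor extracted from the equation). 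Therefore
\[
a(F|_{\widetilde H_0};\, H_0,\Delta) = \bigl(1-\min(e_i,f_j)\bigr) - \bigl(2-\min(e_i,f_j)\bigr) = -1,
\]
which is exactly the log canonical bound. Iterating along the Bierstone-Milman sequence, with the boundary form preserved at each step, yields the claim.

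The principal obstacle is to confirm that the structural hypotheses genuinely survive each blowup: (i) the boundary must retain its shape ``$D(\text{coordinates}) - D(\text{monomial})$'' so that the induction applies, and (ii) the exceptional locus $F|_{\widetilde H_0}$ can be reducible when the ambient blowup meets the proper transform non-transversely, in which case the discrepancy bound has to be verified on each prime component separately. Both points are precisely what Bierstone-Milman's combinatorial selection of centers controls, and the framework of singularities of pairs on Gorenstein $S_2$-varieties developed in the preceding subsection makes the discrepancy bookkeeping unambiguous across the entire resolution.
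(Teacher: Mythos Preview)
Your proposal is correct and follows essentially the same route as the paper: resolve $H$ by the Bierstone--Milman sequence of blowups in coordinate centers, check that each step preserves the shape ``$D(\text{coordinates})-D(\text{monomial})$'' of the boundary, and finish once the strict transform is smooth with simple normal crossing boundary. The paper packages the inductive step slightly differently, proving for an arbitrary center $V(x_1,\dots,x_k,y_1,\dots,y_l)$ the crepant identity $\beta^*[\omega_H(D(w)-D(t))]\simeq \omega_{\widetilde H}(D(\tilde w)-D(\tilde t))$ rather than computing a discrepancy; this formulation dissolves your obstacle~(ii) entirely (one never has to locate or decompose $F|_{\widetilde H}$), and the paper makes the termination measure explicit as the lexicographic pair $(m(H),M(H))=\bigl(\min,\max\bigr)\bigl(\sum e_i,\sum f_j\bigr)$. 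One small caution on your reduction to $r=0$: during the resolution a variable can drop out of the binomial (its exponent becomes $0$) and thereby turn into a ``$z$''-variable, so the inductive hypothesis must allow $r\ge 0$, which is exactly why the paper keeps the $z_i$ in the statement.
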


Let \(w=x_1\cdots x_n y_1\cdots y_mz_1\cdots z_r\) be the multiplication of all the coordinates. The reason for adding \(D(z_1\cdots z_r)\) into the pair is to apply induction. In the remaining section, we resolve the singularity of \(H\) through a sequence of blow-ups along the ideals cut out by coordinate sections, such as \((x_1,\dots, x_k,y_1\dots, y_l)\). Denote \(Z:=V(x_1,\dots, x_k,y_1\dots, y_l)\). Recall that
\[
Bl_{Z}\A^{n+m+r}=\textrm{Proj } \frac{k[x_1,\dots, x_n, y_1,\dots, y_m, z_1\dots z_r][X_1,\dots, X_k, Y_1,\dots, Y_l]}{\left(\frac{X_1}{x_1}=\dots=\frac{X_k}{x_k}=\frac{Y_1}{y_1}=\dots=\frac{Y_l}{y_l}\right)}
\]
with the induced morphism \(\beta:Bl_{(x_1,\dots, x_k,y_1\dots, y_l)}\A^{n+m+r}\to \A^{n+m+r}\).
Restricted to the affine chart \(\A^{n+m+r}:X_1\neq 0\), we obtain the transformation
\[
\beta:\A^{n+m+r}\to \A^{n+m+r}
\]
given by
\begin{equation}
\label{eqn: blow up transformation}
\begin{aligned}
x_1\mapsto x_1, x_2\mapsto x_1x_2,\dots, x_k\mapsto x_1x_k,\\
y_1\mapsto x_1y_1,y_2\mapsto x_1y_2,\dots, y_l\mapsto x_1y_l,
\end{aligned}
\end{equation}
and all the other coordinates remain the same. Therefore, the pullback \(\beta^*H\) of \(H\) in the chart \(X_1\neq 0\) is the binomial hypersurface
\[
x_1^{e_1+\dots+e_k}x_2^{e_2}\cdots x_n^{e_n}=x_1^{f_1+\dots+f_l}y_1^{f_1}\cdots y_m^{f_m}.
\]
The strict transform \(\widetilde H\) of \(H\) in the chart \(X_1\neq 0\) is the binomial hypersurface above divided by \(x_1^{\min(e_1+\dots+e_k, f_1+\dots+f_l)}\).

Therefore, for each affine chart, we may consider a pair \((\widetilde H,D(\tilde w)-D(\tilde t))\) where \(\tilde w\) is the multiplication of the coordinates and \(\tilde t\) is the monomial appearing in the binomial equation.

\begin{lem}
\label{lem: blow up binomial}
In the above notation, let \(\beta:\widetilde H\to H\). On the affine chart \(X_1\neq 0\),
\begin{enumerate}
    \item If \(e_1+\dots+e_k \ge f_1+\dots+f_l\), then \(\widetilde H\) is a binomial hypersurface
    \[
    x_1^{(e_1+\dots+e_k)-(f_1+\dots+f_l)}x_2^{e_2}\cdots x_n^{e_n}=y_1^{f_1}\cdots y_m^{f_m}.
    \]
    \item If \(e_1+\dots+e_k \le f_1+\dots+f_l\), then \(\widetilde{H}\) is a binomial hypersurface
    \[
    x_2^{e_2}\cdots x_n^{e_n}=x_1^{-(e_1+\dots+e_k)+(f_1+\dots+f_l)}y_1^{f_1}\cdots y_m^{f_m}.
    \]
\end{enumerate}
In either of the cases, there exists a canonical isomorphism
\[
\beta^*[\w_H(D(w)-D(t))]\simeq \w_{\widetilde H}(D(\tilde w)-D(\tilde t))
\]
In particular, \((H,D(w)-D(t))\) is log canonical if and only if \((\widetilde H,D(\tilde w)-D(\tilde t))\) is log canonical on every affine chart.
\end{lem}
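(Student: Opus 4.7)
The plan is to obtain the two binomial descriptions in (1) and (2) by direct substitution, and then to establish the canonical isomorphism of sheaves via three short coordinate computations on the affine chart $X_1\ne 0$; the ``in particular'' clause will follow from the definition of log canonicity on Gorenstein varieties recalled in Section~2.3.

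\textbf{The binomial form of $\widetilde H$.} First I would substitute \eqref{eqn: blow up transformation} into $F = x_1^{e_1}\cdots x_n^{e_n} - y_1^{f_1}\cdots y_m^{f_m}$ to obtain on the chart
\[
\beta^*F = x_1^{e_1+\cdots+e_k}x_2^{e_2}\cdots x_n^{e_n} - x_1^{f_1+\cdots+f_l}y_1^{f_1}\cdots y_m^{f_m},
\]
and then factor out $x_1^{s}$ with $s := \min\bigl(\sum_{i=1}^ke_i,\,\sum_{j=1}^lf_j\bigr)$; this produces the strict transform $\tilde F$ in the two forms stated in (1) and (2).

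\textbf{The canonical isomorphism.} Next I would establish
\[
\beta^*[\w_H(D(w)-D(t))]\simeq \w_{\widetilde H}(D(\tilde w)-D(\tilde t))
\]
by combining three identities on the chart. First, the Jacobian of \eqref{eqn: blow up transformation} is lower triangular after a suitable reordering, with diagonal $(1, x_1, \ldots, x_1)$ containing $k+l-1$ copies of $x_1$, so its determinant is $x_1^{k+l-1}$ and hence $\beta^*\Omega = x_1^{k+l-1}\Omega$ for $\Omega := dx_1\wedge\cdots\wedge dz_r$ the standard volume form. Second, adjunction provides the distinguished generators $\eta_H := \mathrm{Res}(\Omega/F)$ of $\w_H$ and $\eta_{\widetilde H} := \mathrm{Res}(\Omega/\tilde F)$ of $\w_{\widetilde H}$; combining with $\beta^*F = x_1^{s}\tilde F$ yields $\beta^*\eta_H = x_1^{k+l-1-s}\eta_{\widetilde H}$, equivalently $K_{\widetilde H} = \beta^*K_H + (k+l-1-s)E$, where $E$ is the exceptional divisor of $\beta$ on $\widetilde H$ in this chart. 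Third, direct substitution gives $\beta^*w = x_1^{k+l-1}\tilde w$ and $\beta^*t = x_1^{s}\tilde t$, so $\beta^*(D(w)-D(t)) = (k+l-1-s)E + D(\tilde w) - D(\tilde t)$. Summing these, the exceptional multiplicities along $E$ cancel exactly to give
\[
\beta^*\bigl(K_H + D(w) - D(t)\bigr) = K_{\widetilde H} + D(\tilde w) - D(\tilde t),
\]
and the distinguished rational section $\beta^*(\eta_H\cdot t/w)$ matches $\eta_{\widetilde H}\cdot\tilde t/\tilde w$ under the corresponding identification of line bundles.

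\textbf{Log canonicity and expected obstacle.} For the ``in particular'' statement, log canonicity of a pair on a Gorenstein variety is measured by discrepancies on any log resolution; I would choose such a resolution of $(H, D(w)-D(t))$ to factor through $\beta$ on each of the affine charts $X_j\ne 0$ and $Y_j\ne 0$ of the blow-up (which are all symmetric to the chart analyzed above). The isomorphism then implies $\beta$ contributes no discrepancy along the exceptional divisor, so log canonicity of $(H, D(w)-D(t))$ is equivalent to log canonicity of the strict-transform pairs $(\widetilde H, D(\tilde w)-D(\tilde t))$ on every chart. The hard part is precisely the exact cancellation between the Jacobian contribution $k+l-1-s$ and the combined pole of $w/t$ along $E$; this coincidence is really the content of the lemma, encoding the log canonicity of the toric-like binomial singularities and driving the inductive resolution strategy of Proposition~\ref{prop: log canonical}.
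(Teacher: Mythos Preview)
Your argument is correct and follows essentially the same route as the paper's proof: both compute the exceptional multiplicity $k+l-1-s$ for $\omega_{\widetilde H}$ (you via the Jacobian and Poincar\'e residue, the paper via the blow-up formula $\omega_{Bl\mathbb{A}}=\beta^*\omega_{\mathbb{A}}+(k+l-1)D(x_1)$ together with adjunction), and both observe that $\beta^*D(w)-\beta^*D(t)$ contributes the compensating $(k+l-1)-s$ along $D(x_1)$, yielding the exact cancellation. The ``in particular'' clause is handled identically by noting that discrepancies over $\widetilde H$ agree with those over $H$.
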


More specifically, the canonical morphism indicates that
\[
a(E,\widetilde H,D(\tilde w)-D(\tilde t))= a(E,H,D(w)-D(t)),
\]
for all divisors \(E\) appearing in a resolution of \(\widetilde H\).

\begin{proof}
We denote \(Bl\A:=Bl_{(x_1,\dots, x_k,y_1\dots, y_l)}\A^{n+m+r}\) and \(\A:=\A^{n+m+r}\) with the blow-up morphism \(\beta:Bl\A\to \A\), in short.
On the chart \(X_1\neq 0\), the exceptional divisor is given by \(x_1=0\). Therefore,
\[
\w_{Bl\A}=\beta^*\w_{\A}+(k+l-1)D(x_1).
\]
By the adjunction formula, there exist natural isomorphisms \(\w_{\widetilde H}=\w_{Bl\A}(\widetilde H)|_{\widetilde H}\) and \(\w_H=\w_{\A}(H)|_{H}\). Since
\[
\beta^*(H)=(\widetilde H) + \min(e_1+\dots+e_k, f_1+\dots+f_l)D(x_1),
\]
we have a natural isomorphism
\[
\w_{\widetilde H}=\beta^*\w_H+\left(k+l-1-\min(e_1+\dots+e_k, f_1+\dots+f_l)\right)D(x_1).
\]
From the transformation relation \eqref{eqn: blow up transformation}, we have
\[\beta^*D(x_1\cdots x_ny_1\cdots y_m z_1\cdots z_r)=D(x_1\cdots x_ny_1\cdots y_m z_1\cdots z_r)+(k+l-1)D(x_1)\]
and in either case (1) or (2), it is easy to check that
\[\beta^*D(t)=D(\tilde t)+\min(e_1+\dots+e_k, f_1+\dots+f_l)D(x_1).\]
Combining all of the above, we obtain a canonical isomorphism
\[
\beta^*[\w_H(D(w)-D(t))]\simeq \w_{\widetilde H}(D(\tilde w)-D(\tilde t)),
\]
which completes the proof.
\end{proof}

By symmetry, Lemma \ref{lem: blow up binomial} holds for every affine chart. We finish the proof of Proposition \ref{prop: log canonical} using Bierstone-Milman's algorithm \cite{BM06}*{Section 5} for resolution of singularities of binomial varieties along with Lemma \ref{lem: blow up binomial}. Define 
\begin{gather*}
m(H):=\min(e_1+\dots+e_n,f_1+\dots +f_m),\\M(H):=\max(e_1+\dots+e_n,f_1+\dots +f_m).
\end{gather*}
Without loss of generality, assume \(e_1+\dots+e_n\ge f_1+\dots +f_m\). Choose a subset \(S\subset \left\{1,\dots, n\right\}\) such that 
\[
0\le \sum_{s\in S}e_s-m(H)<\min_{s\in S}\left\{e_s\right\}.
\]
For convenience, assume \(S=\left\{1,\dots, k\right\}\) for some \(1\le k\le n\). Let a subvariety \(Z:x_1=\dots=x_k=y_1=\dots=y_m=0\), and we now perform an algorithm of blowing up along \(Z\): \(Bl_Z \A^{n+m+r}\to \A^{n+m+r}\). The strict transform \(\widetilde H\) of \(H\) of the blow-up is the binomial hypersurface
\begin{gather*}
x_1^{e_1+\dots+e_k-m(H)}x_2^{e_2}\cdots x_n^{e_n}=y_1^{f_1}\cdots y_m^{f_m} \quad \textrm{on the chart } X_1\neq0,\\
y_1^{e_1+\dots+e_k-m(H)}x_1^{e_1}\cdots x_n^{e_n}=y_2^{f_2}\cdots y_m^{f_m} \quad \textrm{on the chart } Y_1\neq0.
\end{gather*}
On every affine chart of the blow-up, the lexicographic ordering of the pair \[(m(H), M(H))\] decreases. Therefore, the algorithm terminates on each affine chart, in which case, we obtain a hypersurface of the form
\[
H:\left\{x_1^{e_1}\cdots x_n^{e_n}=1\right\} \subset \A^{n+m+r}.
\]
It is now obvious that the pair \((H,D(w)-D(t))\) is log canonical. Indeed, \(H\) is smooth, \(D(t)\) is a zero divisor, and \(D(w)\) is a simple normal crossing divisor defined by the product of the coordinates except for \(x_i\)'s. Therefore, by Lemma \ref{lem: blow up binomial}, the original \((H, D(w)-D(t))\) is log canonical by induction on \((m(H),M(H))\subset \Z^{\ge0}\times \Z^{\ge0}\) endowed with the lexicographic ordering. This completes the proof of Proposition \ref{prop: log canonical}, and thus Lemma \ref{lem: logarithmic base change theorem}.

\subsection{Proof of the logarithmic base change theorem}

Here is an immediate consequence of Lemma \ref{lem: logarithmic base change theorem}.

\begin{cor}
\label{cor: logarithmic base change theorem}
In the setting of Lemma \ref{lem: logarithmic base change theorem}, we have the following inclusion:
\[
g^*f_*\left(\w_X(E)/\w_Y(D)^{\tensor N}\right)\subset f''_*\left(\w_{X''}(E'')/\w_{Y'}(D')^{\tensor N}\right).
\]
\end{cor}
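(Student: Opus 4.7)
The plan is to deduce the corollary directly from Lemma~\ref{lem: logarithmic base change theorem} by tensoring with an appropriate line bundle, pushing forward along $f'$, and then using flat base change for $g$ and the projection formula to identify the two sides with the expressions appearing in the statement.

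First I would establish a preliminary line bundle identity on $X'$:
\[
g'^{*}\bigl(\w_{X}(E)/\w_{Y}(D)\bigr)\;\isom\;\w_{X'}(E+D'-D)\tensor f'^{*}\w_{Y'}(D')^{-1}.
\]
Unwinding the definitions, the divisorial pullbacks $g'^{*}\O_{X}(E)$, $f'^{*}\O_{Y'}(D')$, and $(g\circ f')^{*}\O_{Y}(-D)$ cancel, reducing this to the relative dualizing identity $\w_{X'}\isom g'^{*}\w_{X/Y}\tensor f'^{*}\w_{Y'}$. The flatness of $g$, which follows from strict smoothness, guarantees that $X'\hookrightarrow X\times Y'$ is a regular embedding of codimension $\dim Y$ with normal bundle $(g\circ f')^{*}T_{Y}$, inherited from the diagonal $\Delta_{Y}\subset Y\times Y$ pulled back along $(f,g)$. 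The identity then follows from ordinary adjunction for a regular embedding.

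With this in hand, I would tensor the inclusion of Lemma~\ref{lem: logarithmic base change theorem} on $X'$ by the line bundle $f'^{*}\w_{Y'}(D')^{-\tensor N}$, which preserves the inclusion, then move this line bundle inside $\mu_{*}$ on the right via the projection formula (using $f''=f'\circ\mu$, so $\mu^{*}f'^{*}=f''^{*}$), and push everything forward by the left-exact functor $f'_{*}$. The right-hand side becomes $f''_{*}\bigl(\w_{X''}(E'')/\w_{Y'}(D')^{\tensor N}\bigr)$ since $f''_{*}=f'_{*}\circ\mu_{*}$; the left-hand side, using the identity above, becomes $f'_{*}g'^{*}\bigl(\w_{X}(E)/\w_{Y}(D)^{\tensor N}\bigr)$, and flat base change for the flat morphism $g$ rewrites this as $g^{*}f_{*}\bigl(\w_{X}(E)/\w_{Y}(D)^{\tensor N}\bigr)$, giving the desired inclusion.

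The only real obstacle is the preliminary line bundle identity, which amounts to the base change formula $\w_{X'/Y'}\isom g'^{*}\w_{X/Y}$ for the relative dualizing sheaf. This is not completely formal because $X'$ is only Gorenstein rather than smooth; however, the flatness of $g$ forces $X'$ to be a regular embedding of the expected codimension in $X\times Y'$, so that standard adjunction applies. Everything else in the argument is a routine use of projection formula, flat base change, and left exactness of pushforward.
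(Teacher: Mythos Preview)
Your proof is correct and follows essentially the same route as the paper's: establish the identity $g'^{*}\w_{X/Y}\isom\w_{X'/Y'}$ (the paper simply cites flatness of $g$, while you spell it out via adjunction for the regular embedding $X'\hookrightarrow X\times Y'$), use it together with the inclusion from Lemma~\ref{lem: logarithmic base change theorem} and the projection formula, then push forward by $f'_{*}$ and apply flat base change $g^{*}f_{*}=f'_{*}g'^{*}$.
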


\begin{proof}
Since \(g\) is flat, \(g'^*\w_{X/Y}=\w_{X'/Y'}\) which implies that
\begin{align*}
g'^*\left(\w_X(E)/\w_Y(D)^{\tensor N}\right)&=\left(\w_{X'}(E-D)/\w_{Y'}\right)^{\tensor N}\\
&=\left(\w_{X'}(E+D'-D)/\w_{Y'}(D')\right)^{\tensor N}\\
&\subset \mu_*\left(\w_{X''}(E'')/\w_{Y'}(D')^{\tensor N}\right).
\end{align*}
Taking pushforward \(f'_*\) on each side, we obtain the inclusion, since \(g^*f_*=f'_*g'^*\) by the flatness of \(g\).
\end{proof}

\begin{proof}[Proof of Theorem \ref{thm: logarithmic base change theorem}]
As given in the assumption, \(g:(Y',D')\to (Y,D)\) is a strict morphism of log smooth pairs such that \(g|_{Y'\setminus D'}:Y'\setminus D' \to Y\setminus D\) is smooth. Then the set of points on \(D\), such that either \(D\) is singular or \(g|_{D_I'}:D_I'\to D\) is singular for some stratum \(D_I'\) of \(D'\), is a Zariski closed subset of \(Y\) with codimension at least \(2\). Therefore, there exists an open subvariety \(Y_0\subset Y\) with \(\codim_{Y}Y\setminus Y_0\ge 2\), such that \(g|_{g^{-1}(Y_0)}:(Y'_0,D'_0)\to (Y_0,D_0)\) is strictly smooth. Since we are taking the double dual at the end, we need only prove the inclusion in the statement of the theorem on \(Y_0\). Therefore, it suffices to prove the theorem when \(g\) is a strictly smooth morphism of log smooth pairs.

Then by Corollary \ref{cor: logarithmic base change theorem}, we have 
\[
g_*\left[g^*f_*\left(\w_X(E)/\w_Y(D)^{\tensor N}\right)\tensor \left(\w_{Y'}(D')/\w_Y(D)^{\tensor N}\right)\right]\subset h_*\left(\w_{X''}(E'')/\w_{Y}(D)^{\tensor N}\right).
\]
Further shrinking an open subvariety \(Y_0\subset Y\) with \(\codim_Y Y\setminus Y_0\ge 2\) where
\[f_*\left(\w_X(E)/\w_Y(D)^{\tensor N}\right)\]
is locally free, we have the following inclusion
\[
f_*\left(\w_X(E)/\w_Y(D)^{\tensor N}\right)\tensor g_* \left(\w_{Y'}(D')/\w_Y(D)^{\tensor N}\right)\subset h_*\left(\w_{X''}(E'')/\w_{Y}(D)^{\tensor N}\right)
\]
from which we obtain the conclusion.
\end{proof}

\section{Logarithmic Higgs sheaves and Campana-P\u aun criterion}
\label{sec: logarithmic Higgs sheaves and Campana Paun}

In the next two subsections, we mostly review some important constructions from the literature, with small modifications needed here in the first one. In the end, we prove Theorem \ref{thm: effectivity descends to pseudo-effectivity} (2) as an immediate application.

\subsection{Construction of logarithmic Higgs sheaves}
\label{sec: logarithmic Higgs sheaves}

Given a morphism from a smooth projective variety to a curve, Viehweg-Zuo \cite{VZ01} studied a lower bound of the number of singular fibers via the construction of Higgs bundles. This construction was later generalized in \cite{VZ02} to a morphism of higher dimensional varieties with positivity conditions on the fibers. Using the theory of Hodge modules, Popa-Schnell \cite{PS17} refined the construction of logarithmic Higgs sheaves with poles along the discriminant divisor in general. Along with the results of Campana-P\u{a}un \cite{CP19}, these provide a powerful method to study the hyperbolicity of the base of a smooth projective morphism. In this section, we summarize and explain the above, following Popa and Schnell's treatment with some simplifications. As before, \((Y,D)\) is a log smooth pair.

\begin{defn}
A graded \(\O_Y\)-module \(\F_\bullet=\oplus_k \F_k\) is a \textit{graded logarithmic Higgs sheaf} with poles along \(D\) if there exists a logarithmic Higgs structure 
\[
\phi_\bullet:\F_\bullet\to \F_{\bullet+1}\tensor \Omega_Y(\log D)
\]
such that \(\phi\wedge \phi:\F_{\bullet}\to \F_{\bullet+2}\tensor \Omega_Y^2(\log D)\) is the zero morphism. Unless otherwise stated, \(\F_k=0\) for \(k\ll 0\). Define
\[
\K_k(\phi):=\ker (\phi_k:\F_k\to \F_{k+1}\tensor \Omega_Y(\log D))
\]
to be the kernel of the generalized Kodaira-Spencer map for each \(k\).
\end{defn}

When a surjective morphism \(f:X\to Y\) with connected fibers satisfies some additional effectivity condition for a particular line bundle, Popa-Schnell \cite{PS17}*{Theorem 2.3} constructed a graded logarithmic Higgs sheaf with poles along the discriminant locus. For our use later in the paper, we modify \cite{PS17}*{Theorem 2.3} for a logarithmic setting, and drop the assumption on the connectedness of the fibers.

\begin{thm}
\label{thm: construction of logarithmic Higgs sheaves}
Let \(f:(X,E)\to (Y,D)\) be a surjective projective morphism of quasi-projective log smooth pairs with \(E=f^{-1}(D)\), and \(f|_{X\setminus E}\) is smooth. For a line bundle \(\L\) on \(Y\), assume that some power of \(\w_{X}(E)/\w_Y(D)\tensor f^*\L^{-1}\) is effective (i.e. has a nonzero section). Then there exists a graded logarithmic Higgs sheaf \(\F_\bullet\) with poles along \(D\) satisfying the following properties:
\begin{enumerate}[(a)]
\item One has \(\L\subset \F_0\), and \(\F_k=0\) for \(k<0\).
\item There exists \(d\) such that \(\F_k=0\) for all \(k>d\).
\item Each \(\F_k\) is a reflexive coherent sheaf on \(Y\).
\item Each dual \(\K_k(\phi)^{*}\) of the kernel of the generalized Kodaira-Spencer map is weakly positive if \(\K_k(\phi)\neq 0\).
\end{enumerate}
\end{thm}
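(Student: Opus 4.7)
The plan is to mirror the Higgs-sheaf construction of Popa and Schnell [PS17, Theorem 2.3], adapting each step to accommodate the logarithmic poles along $E$ and $D$, and to drop the connected-fibers assumption. The input data is a nonzero morphism $s:f^*\L^{\tensor N}\hookrightarrow (\w_X(E)/\w_Y(D))^{\tensor N}$ for some $N\geq 1$, supplied by the effectivity hypothesis.

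First, I would build a cyclic cover $\pi:\bar Z\to X$ of degree $N$ associated to $s$, and pick a log resolution $\nu:Z\to \bar Z$ so that the composition $h=f\circ\pi\circ\nu:Z\to Y$ is log smooth over $Y\setminus D$ with $h^{-1}(D)$ a reduced simple normal crossing divisor. Applying Saito's theory of mixed Hodge modules to $h_*\Q^H_Z$, or equivalently Schmid's canonical extension of the variation of Hodge structure on the middle cohomology of the fibers of $h|_{Y\setminus D}$, yields a filtered $\D_Y(\log D)$-module. Taking reflexive hulls of the graded pieces of its Hodge filtration defines the sheaves $\F_k$, and the Kodaira-Spencer differential gives the Higgs field $\phi_\bullet$. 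The integrability $\phi\wedge\phi=0$ and the compatibility with the log pole structure along $D$ are standard consequences of the variation of Hodge structure formalism, so $(\F_\bullet,\phi_\bullet)$ is a graded logarithmic Higgs sheaf with poles along $D$.

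Property (a) is extracted from the eigensheaf decomposition \[\pi_*\O_Z\supset (\w_X(E)/\w_Y(D))^{-(N-1)}\tensor f^*\L^{\tensor (N-1)},\] which after the Hodge-filtration shift embeds $\L$ into the lowest graded piece $\F_0$; the disconnected-fiber hypothesis in the original [PS17, Theorem 2.3] is sidestepped by passing to the Stein factorization of $h$ over $Y\setminus D$ and retaining only the summand containing this distinguished line bundle. Properties (b) and (c) are immediate from the boundedness of the Hodge filtration on a polarizable Hodge module together with the definition of $\F_k$ as a reflexive hull in codimension one.

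The main obstacle is property (d). Following the strategy of Viehweg-Zuo, the dual $\K_k(\phi)^*$ of the kernel of the Kodaira-Spencer map should be weakly positive because it receives a map, up to reflexive hull, from a pushforward of a log pluricanonical sheaf on an iterated fiber product of $h$, and such pushforwards are weakly positive by Viehweg's classical theorem. The technical heart is to match the Higgs-theoretic object $\K_k(\phi)$ with the Hodge-theoretic one to which Viehweg's machinery applies in the \emph{logarithmic} setting; this is precisely where Theorem~\ref{thm: logarithmic base change theorem} and the fiber product trick of Corollary~\ref{cor: logarithmic fiber product trick} enter, transferring the weak positivity statement across the iterated log fiber product of $h$ and producing the desired bound on $\K_k(\phi)^*$.
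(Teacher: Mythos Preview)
Your proposal has two genuine gaps, both stemming from how you define $\F_\bullet$.

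First, you take $\F_k$ to be the reflexive hull of the full graded piece $\gr^F_k\mathcal{V}_0$ of the canonical extension of the VHS coming from $h$. But the cyclic cover $Z\to X$ introduces additional branching, so the discriminant $D_h$ of $h$ strictly contains $D$ in general; your claim that a log resolution of $Z$ makes $h$ log smooth over $Y\setminus D$ is simply false. Consequently the Higgs field $\phi'$ on $\gr^F_\bullet\mathcal{V}_0$ has poles along $D_h$, not along $D$, and your $\F_\bullet$ is not a logarithmic Higgs sheaf with poles along $D$ as required. The paper fixes this by defining $\F_k$ not as the whole graded piece but as the reflexive hull of the \emph{image} of the natural map
\[
\rho_k: R^k f_*\bigl(\Omega^{d-k}_{X/Y}(\log E)\otimes B^{-1}\bigr)\longrightarrow R^k h_*\Omega^{d-k}_{Z/Y}(\log E_h),
\]
where $B=\w_X(E)/\w_Y(D)\otimes f^*\L^{-1}$. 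Because the connecting maps on the $X$-side only involve $\Omega_Y(\log D)$, the induced Higgs field on this image has poles only along $D$, and one reads off $\F_0=(\L\otimes f_*\O_X)^{**}\supset\L$ directly, with no need for Stein factorization.

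Second, your route to property (d) is misdirected. The logarithmic base change theorem and the fiber product trick play no role in the proof of this theorem; they enter only later, in the applications. Once $\F_\bullet$ is defined as an image inside $\gr^F_\bullet\mathcal{V}_0$, one has $\K_k(\phi)\subset\K_k(\phi')$, hence a generically surjective map $\K_k(\phi')^*\to\K_k(\phi)^*$. The weak positivity of $\K_k(\phi')^*$ is a purely Hodge-theoretic fact about kernels of Kodaira--Spencer maps for polarized VHS (Zuo, Brunebarbe, Popa--Wu), proved via curvature estimates rather than fiber products, and weak positivity then passes to the quotient. Your proposed detour through iterated log fiber products of $h$ and Viehweg-type positivity does not give a map to $\K_k(\phi)^*$ for arbitrary $k$.
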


We recall the definitions and properties of weakly positive sheaves and big sheaves, introduced by Viehweg \cites{Viehweg83, Viehweg83II}; cf. also \cite{Mori87}*{Section 5}. In what follows, \(\hat{S}^\alpha F\) denotes the double dual of the \(\alpha\)-symmetric product of \(F\), and \(\widehat\det(F)\) denotes the double dual of the determinant of \(F\).

\begin{defn}
\label{defn: weak positivity and bigness}
A torsion free coherent sheaf \(F\) on a normal quasi-projective variety \(W\) is \textit{weakly positive} if, for every positive integer \(\alpha\) and every ample line bundle \(H\), there exists a positive integer \(\beta\) such that \(\hat{S}^{\alpha\beta}F\tensor H^\beta\) is generically generated by global sections. We say \(F\) is \textit{big} if, for every ample line bundle \(H\), there exists a positive integer \(\alpha\) such that \(\hat{S}^{\alpha}F\tensor H^{-1}\) is weakly positive.
\end{defn}

If \(F\) is a line bundle and \(W\) is a normal projective variety, then \(F\) is weakly positive (resp. big) if and only if \(F\) is pseudo-effective (resp. big). This is immediate from the definition. We recall additional important properties of weakly positive and big sheaves.

\begin{lem}[\cites{Viehweg83, Viehweg83II, Kawamata85, Mori87}]
\label{lem: weakly positive and big sheaves}
Let \(F\) be a nonzero torsion free coherent sheaf on a normal quasi-projective variety \(W\). Then we have the following properties.
\begin{enumerate}[(1)]
\item Let \(W_0\subset W\) be a Zariski open subset. If \(F\) is weakly positive (resp. big), then \(F|_{W_0}\) is weakly positive (resp. big). If \(\codim_{W} W\setminus W_0\ge 2\), then the converse is true.
\item Let \(F\to G\) be a generically surjective morphism of nonzero torsion free coherent sheaves. If \(F\) is weakly positive (resp. big), then \(G\) is weakly positive (resp. big).
\item Let \(A\) be a big line bundle. If \(F\) is weakly positive, then \(F\tensor A\) is big.
\item If \(F\) is weakly positive (resp. big), then \(\widehat\det(F)\) is weakly positive (resp. big).
\item If \(\tau:W'\to W\) is a finite surjective morphism of normal varieties, then \(F\) is weakly positive (resp. big) if and only if \(\tau^*F\) is weakly positive (resp. big).
\end{enumerate}
\end{lem}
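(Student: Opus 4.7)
The plan is to verify each property by direct manipulation of Definition \ref{defn: weak positivity and bigness}, systematically using the identities \(\hat S^\alpha(F\tensor L)\isom \hat S^\alpha F\tensor L^\alpha\) for a line bundle \(L\), and \((\hat S^\alpha F)|_{W_0}=\hat S^\alpha(F|_{W_0})\) for restriction to an open subset. Establishing (1) first allows us to shrink to any Zariski open with complement of codimension \(\ge 2\); in particular we may work on the locus where \(F\) is locally free, reducing several arguments to the classical vector bundle case.

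For (1) forward, an ample \(H\) on \(W\) restricts to an ample line bundle on \(W_0\), and generic global generation restricts trivially. For the converse, when \(\codim_W(W\setminus W_0)\ge 2\), pick any ample \(H\) on \(W\); by hypothesis \(\hat S^{\alpha\beta}F\tensor H^\beta\) is generically generated over \(W_0\) for some \(\beta\), and by reflexivity all sections extend uniquely across the codimension-two complement. For (2), the generic surjection \(F\to G\) induces a functorial generic surjection \(\hat S^\alpha F\tensor H^\beta\twoheadrightarrow \hat S^\alpha G\tensor H^\beta\) for every \(\alpha,\beta\), propagating generic global generation. For (4), on the locally free locus \(\widehat\det F\) appears as a direct summand of \(F^{\tensor r}\) via the antisymmetrizer idempotent, and weak positivity is preserved under both tensor products (a classical result of Viehweg) and passage to direct summands; by (1), this suffices on all of \(W\).

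For (3), given an ample \(H\), choose \(\alpha\) large so that \(A^\alpha\tensor H^{-1}\) is big, and decompose it as \(B\tensor \O(E)\) with \(B\) ample and \(E\) effective. Then
\[
\hat S^\alpha(F\tensor A)\tensor H^{-1}\isom (\hat S^\alpha F\tensor B)\tensor \O(E),
\]
where \(\hat S^\alpha F\tensor B\) is already weakly positive (ample twists of weakly positive sheaves are in fact big), and the effective twist by \(\O(E)\) is absorbed using the canonical section provided by \(E\). The main obstacle is (5). For the forward direction, pick an ample \(H'\) on \(W'\); by finiteness of \(\tau\) there exists an ample \(H\) on \(W\) and an integer \(m\) with \(\tau^*H^m\tensor H'^{-1}\) effective, so the generic generation of \(\hat S^\beta F\tensor H\) pulls back through \(\tau^*\) (on the flat locus, which by (1) suffices) and twists down to the desired statement for \(\tau^*F\) relative to \(H'\). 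The reverse direction uses the trace splitting \(\O_W\hookrightarrow \tau_*\O_{W'}\) together with the projection formula to descend global sections of \(\hat S^\alpha(\tau^*F)\tensor \tau^*H^\beta\) back to \(W\). The delicate bookkeeping throughout (5) lies in verifying that \(\hat S^\alpha\) and \(\widehat\det\) commute with \(\tau^*\) and \(\tau_*\) on the flat, locally free locus; this is where (1) becomes indispensable.
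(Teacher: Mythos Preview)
The paper gives no proof of this lemma; it is stated with attribution to \cites{Viehweg83, Viehweg83II, Kawamata85, Mori87} and used as a black box throughout. Your outline follows the standard arguments from those sources and is essentially correct.

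Two places are thinner than they appear. In the forward direction of (1), not every ample line bundle on \(W_0\) arises by restriction from \(W\), so you are tacitly using the standard (but not entirely formal) reduction that the condition in Definition~\ref{defn: weak positivity and bigness} need only be verified for a \emph{single} ample \(H\). More substantially, in both (3) and (4) your argument rests on knowing that \(\hat S^\alpha F\) and \(F^{\tensor r}\) are again weakly positive whenever \(F\) is. You attribute this to Viehweg, and rightly so, but this is the one genuinely nontrivial input: it does not drop out of the definition, and its proof (via branched-covering and fiber-product constructions) is essentially the substance of the cited papers rather than a lemma you can absorb into a sketch. Everything else in your plan is routine bookkeeping, and the overall strategy is sound.
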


We include a sketch of the proof of Theorem \ref{thm: construction of logarithmic Higgs sheaves} for completeness.

\begin{proof}[Proof of Theorem \ref{thm: construction of logarithmic Higgs sheaves}]
Notice that it suffices to construct \(\F_\bullet\) on the complement of a codimension \(2\) subvariety in \(Y\), satisfying the properties \((a)\)-\((d)\). Indeed, taking the reflexive hull of \(\F_\bullet\) over \(Y\), we obtain a graded logarithmic Higgs sheaf, still satisfying the properties of Theorem \ref{thm: construction of logarithmic Higgs sheaves} (see Lemma \ref{lem: weakly positive and big sheaves} (1) for the property \((d)\)).

Take a resolution \(\psi:Z\to X\) of the cyclic cover of \(X\) induced by the section of some power of \(B:=\w_{X}(E)/\w_Y(D)\tensor f^*\L^{-1}\). In particular, there exists a natural inclusion \(\psi^*B^{-1}\to \O_Z\). Denote \(h:=f\circ\psi:Z\to Y\), and let \(D_h\) be the union of the discriminant locus \(\Delta(h)\) and \(D\) in \(Y\). After removing a codimension \(2\) subvariety in \(Y\) and taking a suitable resolution \(Z\) of the cyclic cover, we may assume that \(D_h\) is a smooth divisor such that \(E_h:=h^{-1}D_h\) is a simple normal crossing divisor of \(Z\).

Let \(\Omega_{X/Y}(\log E)\) (resp. \(\Omega_{Z/Y}(\log E_h)\)) be the cokernel of the natural inclusion of the logarithmic cotangent bundles
\[
f^*\Omega_{Y}(\log D)\to \Omega_{X}(\log E) \quad (\mathrm{resp. } \, h^*\Omega_{Y}(\log D_h)\to \Omega_{Z}(\log E_h)).
\]
Further removing a codimension \(2\) subvariety in \(Y\) so that \(f:(X,E)\to (Y,D)\) and \(h:(Z,E_h)\to (Y,D_h)\) are strictly smooth (see e.g. Proof of Theorem \ref{thm: logarithmic base change theorem}), we may assume that \(\Omega_{X/Y}(\log E)\) and \(\Omega_{Z/Y}(\log E_h)\) are locally free from the locally analytic description. As a consequence, we have the Koszul filtration
\[
\Koz^q \, \Omega_{X}^i(\log E):= \mathrm{image}\, \left[f^*\Omega_{Y}^{q}(\log D)\tensor \Omega_{X}^{i-q}(\log E)\to \Omega_{X}^i(\log E)\right]
\]
where \(\Omega_{X}^i(\log E):=\wedge^i\Omega_X(\log E)\) is the \(i\)-th exterior power, for all \(0\le i\le \dim X\). Hence, we have the natural isomorphism
\[\Koz^q/\Koz^{q+1}\, \Omega_{X}^i(\log E)\isom f^*\Omega_{Y}^{q}(\log D)\tensor \Omega_{X/Y}^{i-q}(\log E)\]
and the tautological short exact sequence
\[
0\to f^*\Omega_{Y}(\log D)\tensor \Omega_{X/Y}^{i-1}(\log E) \to \Koz^0/\Koz^{2}\, \Omega_{X}^i(\log E) \to \Omega_{X/Y}^{i}(\log E) \to 0,
\]
which we denote by \(\mathcal C^i_{X/Y}(\log E)\). Respectively, we have the tautological short exact sequence \(\mathcal C^i_{Z/Y}(\log E_h)\).

From the following natural morphism of complexes,
\begin{displaymath}
\xymatrix{
{0} \ar[r] & {h^*\Omega_{Y}(\log D)} \ar[r] \ar[d] & {\psi^*\Omega_{X}(\log E)} \ar[r] \ar[d]  & {\psi^*\Omega_{X/Y}(\log E)} \ar[r] \ar[d] & {0} \\
{0} \ar[r] & {h^*\Omega_{Y}(\log D_h)} \ar[r] & {\Omega_{Z}(\log E_h)} \ar[r] & {\Omega_{Z/Y}(\log E_h)} \ar[r] & 0
}
\end{displaymath}
we have the morphism \(\psi^*\mathcal C^i_{X/Y}(\log E)\to \mathcal C^i_{Z/Y}(\log E_h)\) of the tautological short exact sequences, for all \(i\). Tensored with the natural injection \(\psi^*B^{-1}\to \O_Z\), we obtain the following morphism of short exact sequences:
\begin{equation}
\label{eqn: cyclic cover tautological morphism}
\psi^*\left(\mathcal C^i_{X/Y}(\log E)\tensor B^{-1}\right)\to \mathcal C^i_{Z/Y}(\log E_h).  
\end{equation}
Let \(d=\dim X-\dim Y\). From the Leray spectral sequence associated with \(Rh_*\isom Rf_*\circ R\psi_*\) and the adjointness of the pair \((\psi^*,\psi_*)\), we have the natural morphism
\[
R^{d-i}f_*\left(\Omega^i_{X/Y}(\log E)\tensor B^{-1}\right)\to{R^{d-i}h_*\left(\psi^*\left(\Omega^i_{X/Y}(\log E)\tensor B^{-1}\right)\right)},
\]
which induces the following commutative diagram of the connecting homomorphisms via the derived pushforward of \eqref{eqn: cyclic cover tautological morphism}:
\begin{displaymath}
\xymatrix{
{R^{d-i}f_*\left(\Omega^i_{X/Y}(\log E)\tensor B^{-1}\right)} \ar[r] \ar[d]_{\rho_{d-i}}  & {R^{d-i+1}f_*\left(\Omega^{i-1}_{X/Y}(\log E)\tensor B^{-1}\right)\tensor \Omega_Y(\log D)} \ar[d]^{\rho_{d-i+1}\tensor \iota} \\
{R^{d-i}h_*\left(\Omega^i_{Z/Y}(\log E_h)\right)} \ar[r]^-{\phi'_{d-i}} & {R^{d-i+1}h_*\left(\Omega^{i-1}_{Z/Y}(\log E_h)\right)\tensor \Omega_Y(\log D_h)}
}
\end{displaymath}

Due to Steenbrink (see for example \cites{Steenbrink76, Zucker84}), it is well known that \(\phi'_\bullet\) is the logarithmic Higgs structure of the graded logarithmic Higgs bundle
\[
\gr^F_{\bullet}\mathcal V_0\isom\bigoplus_{k=0}^d R^{k}h_*\left(\Omega^{d-k}_{Z/Y}(\log E_h)\right),
\]
where \(\mathcal V_0\) is Deligne's canonical extension along \(D_h\) on \(Y\), with the induced filtration \(F\), associated to the polarized variation of Hodge structure given by the middle cohomologies of the smooth fibers of \(h\). Define
\[
\mathcal F_k:= \left(\mathrm{image}\, \left[\rho_k:{R^{k}f_*\left(\Omega^{d-k}_{X/Y}(\log E)\tensor B^{-1}\right)}\to {R^{k}h_*\left(\Omega^{d-k}_{Z/Y}(\log E_h)\right)} \right]\right)^{**}
\]
for \(0\le k\le d\), and \(\F_k:=0\) otherwise. Then, \(\F_\bullet\) is a graded logarithmic Higgs sheaf with poles along \(D\). Its logarithmic Higgs structure \(\phi_\bullet\) is induced by \(\phi'_\bullet\) of \(\gr^F_{\bullet}\mathcal V_0\). Notice that \(\rho_0\) is the pushforward \(f_*\) of the inclusion
\[
\w_X(E)/\w_Y(D)\tensor B^{-1}\to \psi_*(\w_Z(E_h)/\w_Y(D_h)),
\]
which is the adjoint of \(\psi^*(\w_X(E)/\w_Y(D)\tensor B^{-1})\to \w_Z(E_h)/\w_Y(D_h)\). Accordingly, \(\F_0=(\L\tensor f_*\O_X)^{**}\), which implies \(\L\subset \F_0\). Therefore, the properties \((a)\)-\((c)\) are immediate from the construction.

Observe that \(\F_\bullet\subset \gr^F_\bullet \mathcal V_0\), which implies \(\K_k(\phi)\subset \K_k(\phi')\). Hence, we have a generically surjective morphism \(\K_k(\phi')^*\to \K_k(\phi)^*\). It is well known that \(\K_k(\phi')^*\) is weakly positive if not zero \cites{PW16, Brunebarbe18, Zuo00}. From Lemma \ref{lem: weakly positive and big sheaves} (2), \(\K_k(\phi)^*\) is weakly positive if not zero, which verifies the property (d).
\end{proof}

\begin{rmk}
\label{rmk: assumption in logarithmic Higgs sheaves}
The conclusion of Theorem \ref{thm: construction of logarithmic Higgs sheaves} continues to hold when we replace the assumption \(\k(X, \w_{X}(E)/\w_Y(D)\tensor f^*\L^{-1})\ge 0\) by the existence of a nonzero morphism
\[
\L^{\tensor N}\to \left[f_*\left(\w_{X}(E)/\w_Y(D)^{\tensor N}\right)\right]^{**}
\]
for some positive integer \(N\). By the left-right adjointness of \((f^*, f_*)\), this morphism implies the existence of a nonzero section of \(N\)-th power of \[\w_{X}(E)/\w_Y(D)\tensor f^*\L^{-1}\] over the complement of a codimension 2 subvariety in \(Y\). Therefore, we obtain a graded logarithmic Higgs sheaf \(\F_\bullet\), on the complement of a codimension \(2\) subvariety in \(Y\), satisfying the properties of Theorem \ref{thm: construction of logarithmic Higgs sheaves}. As at the beginning of the proof of Theorem \ref{thm: construction of logarithmic Higgs sheaves}, we obtain the desired graded logarithmic Higgs sheaf via taking the reflexive hull of \(\F_\bullet\) over \(Y\). This refined assumption turns out to be more convenient for the applications of the logarithmic base change theorem.
\end{rmk}

The following lemma is obtained from the standard manipulation of logarithmic Higgs sheaves by Viehweg and Zuo. This allows us to apply Campana and P\u aun's results explained in the next subsection to deduce results on hyperbolicity.

\begin{lem}
\label{lem: CP setup}
Let \((Y,D)\) be a log smooth pair. Let \(\F_\bullet\) be a graded logarithmic Higgs sheaf with poles along \(D\) satisfying the properties (a)-(d) of Theorem \ref{thm: construction of logarithmic Higgs sheaves}. Then there exists a pseudo-effective line bundle \(P\) and a nonzero morphism
\[
\L^{\tensor r}\tensor P\to (\Omega_Y(\log D))^{\tensor kr}
\]
for some \(r>0\), \(k\ge 0\).
\end{lem}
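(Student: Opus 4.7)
The plan is to iterate the Higgs field along the inclusion $\L \subset \F_0$ from property (a) until the iteration stops, and then dualize the resulting data using the weak positivity from property (d).

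First, I would use the integrability $\phi \wedge \phi = 0$ to build, by iterated application of $\phi$ to $\L \hookrightarrow \F_0$, morphisms
\[
\tau_k : S^k T_Y(-\log D) \tensor \L \to \F_k \qquad (k \ge 0),
\]
equivalently $\L \to \F_k \tensor S^k \Omega_Y(\log D)$. The factorization of the iterated Higgs field through \emph{symmetric} (rather than merely tensor) powers of $T_Y(-\log D)$ is precisely what the integrability condition encodes. Property (b) guarantees $\tau_k = 0$ for $k > d$, so I may pick $k \ge 0$ to be the largest index with $\tau_k \neq 0$.

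The key technical step is then to show that the image $\G \subset \F_k$ of $\tau_k$ lies in $\K_k(\phi)$. I would argue that the composition $\phi_k \circ \tau_k$, rewritten as $T_Y(-\log D) \tensor S^k T_Y(-\log D) \tensor \L \to \F_{k+1}$, factors through the symmetrization $T \tensor S^k T \twoheadrightarrow S^{k+1} T$ to yield $\tau_{k+1}$; maximality of $k$ forces $\tau_{k+1} = 0$, so $\phi_k|_\G = 0$ and $\G \subset \K_k(\phi)$. I expect this to be the main technical obstacle of the proof: the factorization is a direct consequence of $\phi \wedge \phi = 0$ but needs careful bookkeeping of the iterated Higgs field to ensure that the vanishing of $\tau_{k+1}$ annihilates $\phi_k$ on the whole image, not just on a distinguished piece. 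After replacing $\G$ by its saturation in $\K_k(\phi)$, I may assume it is reflexive.

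The conclusion then follows from duality plus the weak positivity package. The surjection $\K_k(\phi)^* \twoheadrightarrow \G^*$ together with property (d) and Lemma \ref{lem: weakly positive and big sheaves} (2) show that $\G^*$ is weakly positive; Lemma \ref{lem: weakly positive and big sheaves} (4) then gives that the line bundle $P := \det \G^*$ is pseudo-effective. Dualizing $S^k T_Y(-\log D) \tensor \L \twoheadrightarrow \G$ produces a generic injection $\G^* \hookrightarrow S^k \Omega_Y(\log D) \tensor \L^{-1}$; taking the top exterior power with $r := \mathrm{rank}(\G) \ge 1$ and composing with the standard antisymmetrization $\wedge^r(S^k \Omega_Y(\log D)) \hookrightarrow \Omega_Y(\log D)^{\tensor kr}$ yields a nonzero morphism $P \to \Omega_Y(\log D)^{\tensor kr} \tensor \L^{-r}$, which rearranges to the desired $\L^{\tensor r} \tensor P \to \Omega_Y(\log D)^{\tensor kr}$.
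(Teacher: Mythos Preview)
Your argument is correct and reaches the same conclusion, but it takes a slightly more elaborate route than the paper. The paper does \emph{not} invoke the integrability condition $\phi\wedge\phi=0$ at all: instead of building maps $\tau_k:S^kT_Y(-\log D)\tensor\L\to\F_k$ through symmetric powers, it simply iterates the composition
\[
\L\subset\F_0\xrightarrow{\phi_0}\F_1\tensor\Omega_Y(\log D)\xrightarrow{\phi_1\tensor id}\F_2\tensor(\Omega_Y(\log D))^{\tensor 2}\to\cdots
\]
in \emph{tensor} powers. For the largest $k$ with $\L\to\F_k\tensor(\Omega_Y(\log D))^{\tensor k}$ nonzero, the image of $\L$ lies in $\ker(\phi_k\tensor id)=\K_k(\phi)\tensor(\Omega_Y(\log D))^{\tensor k}$ (using only that $\Omega_Y(\log D)$ is locally free), giving directly $\L\subset\K_k(\phi)\tensor(\Omega_Y(\log D))^{\tensor k}$. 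This bypasses your ``key technical step'' entirely. From there the paper dualizes to $\K_k(\phi)^*\to(\Omega_Y(\log D))^{\tensor k}\tensor\L^{-1}$, takes the image $\K^*$ of this map (rather than the dual of the image of your $\tau_k$), and sets $P=\widehat\det\K^*$; the final passage to $\L^{\tensor r}\tensor P\to(\Omega_Y(\log D))^{\tensor kr}$ is the same antisymmetrization you use.

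Your symmetric-power refinement is the version one often sees when sharper numerical information is needed (e.g.\ bounding the index $k$), but for the bare statement of the lemma the paper's tensor-power argument is shorter and avoids the bookkeeping you flagged as the main obstacle.
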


\begin{proof}
From the logarithmic Higgs structure \(\phi_\bullet\) of \(\F_\bullet\), we have a sequence of morphisms
\[
\phi_{k}\tensor id:\F_{k}\tensor (\Omega_Y(\log D))^{\tensor k}\to \F_{k+1}\tensor (\Omega_Y(\log D))^{\tensor k+1}.
\]
Notice that \(\F_{k}=0\) for large enough \(k\). Therefore, the line bundle \(\L\) is contained in the kernel of \(\phi_{k}\tensor id\) for some \(k\ge 0\):
\[
\L\subset \K_{k}(\phi)\tensor (\Omega_Y(\log D))^{\tensor k}.
\]
This implies the existence of a nonzero morphism 
\[
\K_{k}(\phi)^*\to (\Omega_Y(\log D))^{\tensor k}\tensor\L^{-1}.
\]
Let \(\K^*\) be the image of the morphism and \(r\) be the generic rank of \(\K^*\). From the split surjection \({\K^*}^{\tensor r}\to \det \K^*\), where \(\K^*\) is a vector bundle outside of a codimension \(2\) locus in \(Y\), we obtain a nonzero morphism
\[
\widehat\det\K^*\to \left[(\Omega_Y(\log D))^{\tensor k}\tensor \L^{-1}\right]^{\tensor r}.
\]
Note that \(P:=\widehat\det\K^*\) is a pseudo-effective line bundle by Lemma \ref{lem: weakly positive and big sheaves} (4). Therefore we obtain the conclusion.
\end{proof}

\subsection{Positivity properties of the logarithmic cotangent bundle}
\label{sec: Campana-Paun}

We highlight the results of Campana-P\u aun \cite{CP19}. Investigating foliations on orbifold tangent bundles, they studied the positivity properties of their tensor powers. Together with the logarithmic base change theorem and the construction of logarithmic Higgs sheaves, this provides the machinery to study the positivity of the base of a smooth projective morphism.

\begin{thm}[\cite{CP19}*{Theorem 1.3}]
\label{thm: CP main}
Let \((Y,D)\) be a projective log smooth pair such that \(K_Y+D\) is pseudo-effective. For every quotient \(Q\) of a tensor power of the logarithmic cotangent bundle \((\Omega_Y(\log D))^{\tensor N}\) with \(N\ge 1\), the first Chern class \(c_1(Q)\) is pseudo-effective.
\end{thm}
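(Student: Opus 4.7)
The plan is to follow Campana--P\u aun's foliation-theoretic approach and argue by contradiction, ultimately reducing to a theorem on the algebraic integrability of positive-slope foliations on \((Y,D)\).

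First, suppose for contradiction that there exist \(N\ge 1\) and a torsion-free quotient \(Q\) of \((\Omega_Y(\log D))^{\tensor N}\) with \(c_1(Q)\) not pseudo-effective. By the Boucksom--Demailly--P\u aun--Peternell duality between the pseudo-effective cone of divisors and the movable cone of curves, there is a movable curve class \(\alpha\) on \(Y\) with \(c_1(Q)\cdot\alpha<0\). Equivalently, the kernel of \((\Omega_Y(\log D))^{\tensor N}\to Q\) has strictly larger \(\alpha\)-slope than the full tensor power. Dualizing, and using the fact that in characteristic zero the Harder--Narasimhan filtration is compatible with tensor products (so that the maximal destabilizing subsheaf of a tensor power comes from one of the factors), I would extract a saturated subsheaf \(F\subset T_Y(-\log D)\) with strictly positive \(\alpha\)-slope.

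Second, I would invoke the algebraic integrability theorem for positive-slope foliations, due in this logarithmic form to Campana--P\u aun and extending Miyaoka's generic semi-positivity theorem and Bogomolov--McQuillan's theorem. After replacing \(F\) by its Lie-bracket saturation (which does not decrease the \(\alpha\)-slope), the resulting involutive subsheaf of \(T_Y(-\log D)\) defines an algebraically integrable foliation, and after a suitable birational modification its leaves are the fibers of an honest morphism \(f\colon Y'\to Z\). A canonical bundle formula for \(f\) in the orbifold log smooth setting then relates \(K_Y+D\) to the leafwise log canonical class plus a pullback from \(Z\); positivity of \(F\) translates into anti-pseudo-effectivity of the log canonical divisor on the general leaf. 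Since \(\alpha\) can be taken to move the leaves, restricting \(K_Y+D\) to a general leaf produces the inequality \((K_Y+D)\cdot\alpha<0\), contradicting the hypothesis that \(K_Y+D\) is pseudo-effective.

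The main obstacle is the algebraic integrability step. Producing an involutive, positively-sloped subsheaf of \(T_Y(-\log D)\) from a destabilizing quotient of a high tensor power requires that slope semistability descends through tensor operations (a theorem of Ramanan--Ramanathan/Miyaoka in characteristic zero), and the algebraic integrability itself rests on delicate positive-characteristic techniques: a Frobenius-pullback argument à la Miyaoka bounding the slopes of the \(p\)-curvature obstruction, combined with a Bogomolov-type inequality for logarithmic tangent sheaves. Throughout, careful orbifold bookkeeping around the boundary \(D\) is essential, since the movable slope of sections of \(T_Y(-\log D)\) must be matched with the log canonical class \(K_Y+D\) in the canonical bundle formula, and any failure to track the orbifold multiplicities along \(D\) would break the final contradiction.
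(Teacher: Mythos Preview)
The paper does not prove this statement; it is quoted verbatim from \cite{CP19}*{Theorem 1.3} and used as a black box. There is therefore no proof in the paper to compare against.

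That said, your sketch is a reasonable outline of the actual Campana--P\u aun argument: the reduction to a destabilizing subsheaf of \(T_Y(-\log D)\) via BDPP duality and tensor-compatibility of Harder--Narasimhan filtrations is correct, as is the identification of the key input (algebraic integrability of foliations with positive slope, generalizing Bogomolov--McQuillan and Miyaoka). One imprecision: in your final step you write ``since \(\alpha\) can be taken to move the leaves,'' but \(\alpha\) is fixed at the outset by the BDPP duality and cannot be chosen after the fact. In \cite{CP19} the contradiction is obtained instead by showing that the algebraically integrable foliation has rationally connected orbifold leaves, and then invoking a separate pseudo-effectivity result for the log relative canonical bundle of the leaf fibration to conclude that \(K_Y+D\) itself fails to be pseudo-effective. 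The canonical-bundle-formula heuristic you describe is morally right but would need to be made precise along these lines.
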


\begin{thm}[\cite{CP19}*{Theorem 7.6}]
\label{thm: CP pseudoeffective}
Let \((Y,D)\) be a projective log smooth pair, and let \(L\) be a pseudo-effective line bundle on \(Y\). If there exists a nonzero morphism
\[
L\to (\Omega_Y(\log D))^{\tensor k}\tensor (\w_Y(D))^{\tensor r}
\]
for some integers \(k\ge 0\) and \(r\ge 1\), then \(K_Y+D\) is pseudo-effective.
\end{thm}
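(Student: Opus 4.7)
The plan is to argue by contradiction. Assume $K_Y+D$ is not pseudo-effective; writing $T := \Omega_Y(\log D)$ and $M := \w_Y(D)$, I will derive a contradiction with the assumed morphism $L \to T^{\tensor k}\tensor M^{\tensor r}$. First I would invoke a logarithmic form of the Boucksom--Demailly--P\u aun--Peternell duality, which is part of the technical core of \cite{CP19}: the failure of pseudo-effectivity of $K_Y+D$ produces a movable class $\alpha$ on $Y$ with $(K_Y+D)\cdot\alpha<0$. Since $L$ is pseudo-effective, $L\cdot\alpha\ge 0$. If $k=0$ the hypothesis reads $L\hookrightarrow M^{\tensor r}$, whence $L\cdot\alpha\le r(K_Y+D)\cdot\alpha<0$, a contradiction; so I assume $k\ge 1$.

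The nonzero morphism yields an inclusion $L\hookrightarrow T^{\tensor k}\tensor M^{\tensor r}$, hence $\mu_\alpha^{\max}(T^{\tensor k}\tensor M^{\tensor r})\ge L\cdot\alpha\ge 0$. By Langer's theorem on the behaviour of maximal slope under tensor products in characteristic zero, together with the fact that tensoring by a line bundle shifts slopes by its degree,
\[
\mu_\alpha^{\max}\bigl(T^{\tensor k}\tensor M^{\tensor r}\bigr) = k\,\mu_\alpha^{\max}(T) + r\,(K_Y+D)\cdot\alpha,
\]
so $k\,\mu_\alpha^{\max}(T)\ge -r\,(K_Y+D)\cdot\alpha>0$, which gives $\mu_\alpha^{\max}(\Omega_Y(\log D))>0$. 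Dualizing the maximal destabilizing subsheaf $\mathcal{F}\subset T$ then produces a saturated subsheaf $\mathcal{N}\subset T_Y(-\log D)$ of the log tangent bundle whose determinant satisfies $c_1(\mathcal{N})\cdot\alpha>0$.

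The main obstacle is the concluding step: ruling out such a ``positive'' subsheaf of the log tangent bundle when $K_Y+D$ is not pseudo-effective. This is precisely the role of the algebraic-integrability and positivity-of-foliations theorems for logarithmic pairs established in \cite{CP19}, in the tradition of Miyaoka, Bogomolov--McQuillan, and Campana: a saturated subsheaf of $T_Y(-\log D)$ with $c_1$ positive against a movable class is forced to be an algebraically integrable foliation whose space of leaves yields a rational fibration along which the $\alpha$-negativity of $K_Y+D$ cannot persist. I would cite this result from \cite{CP19} as a black box, closing the contradiction.
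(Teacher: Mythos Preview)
The paper does not give its own proof of this statement: Theorem~\ref{thm: CP pseudoeffective} is quoted verbatim from \cite{CP19}*{Theorem 7.6} and used throughout as a black box, so there is no argument in the paper to compare your proposal against.

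That said, your sketch is a faithful outline of the strategy actually carried out in \cite{CP19}. The reduction via a movable class $\alpha$ witnessing $(K_Y+D)\cdot\alpha<0$, the slope computation $\mu_\alpha^{\max}(T^{\otimes k}\otimes M^{\otimes r})=k\,\mu_\alpha^{\max}(T)+r(K_Y+D)\cdot\alpha$, and the passage to a destabilizing subsheaf of $T_Y(-\log D)$ are exactly the opening moves there. One caution: the ``black box'' you invoke at the end---ruling out a saturated subsheaf of the log tangent bundle with $c_1$ positive on a movable class---is not a peripheral lemma but the main theorem of \cite{CP19} (their algebraic-integrability and orbifold-foliation machinery), and Theorem~7.6 is presented in that paper precisely as a corollary of it. So your proposal is not an independent proof; it is a correct reduction of the statement to the core result of \cite{CP19}, which is also how \cite{CP19} itself derives it. For the purposes of this paper, simply citing \cite{CP19}*{Theorem 7.6} as done here is the appropriate level of detail.
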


Originally, Campana and P\u aun stated their results in terms of an orbifold pair \((Y,D)\), but we restrict the statements to a log smooth pair since it suffices for our purpose.

As a quick application of the results in this section, we prove Theorem \ref{thm: effectivity descends to pseudo-effectivity} (2).

\begin{proof}[Proof of Theorem \ref{thm: effectivity descends to pseudo-effectivity} (2)]
Since \(\w_X(E)/\w_Y(D)\tensor f^*\w_Y(D)\) is effective, there exists a graded logarithmic Higgs sheaf \(\F_\bullet\) satisfying the properties of Theorem \ref{thm: construction of logarithmic Higgs sheaves}, with \(\L=\w_Y(D)^{-1}\). From Lemma \ref{lem: CP setup}, we have a pseudo-effective line bundle \(P\) and a nonzero morphism
\[
(\w_Y(D))^{\tensor -r}\tensor P\to (\Omega_Y(\log D))^{\tensor kr}
\]
for \(r>0\), \(k\ge 0\). Therefore, \(K_Y+D\) is pseudo-effective by Theorem \ref{thm: CP pseudoeffective}.
\end{proof}

Suppose \(f:X\to \P^n\) is a surjective morphism, with \(X\) a smooth projective variety of non-negative Kodaira dimension. By Theorem \ref{thm: effectivity descends to pseudo-effectivity} (2), it is easy to see that \(\dim \Delta(f)=n-1\) and \(\deg \Delta(f)\ge n+1\). However, this inequality is not sharp, as Viehweg-Zuo \cite{VZ01}*{Theorem 0.2} suggests. Theorem \ref{thm: degree of discriminant locus} and Corollary \ref{cor: degree of discriminant locus} give a sharp inequality whose proof is provided in the next section, which primarily uses the logarithmic base change theorem.

\section{Proofs}
\label{sec: proofs}

\subsection{Proof of Theorem \ref{thm: log general type descends}}

Theorem \ref{thm: log general type descends} is a generalization of Popa-Schnell \cite{PS22}*{Theorem A}, applying the logarithmic fiber product trick to their proof.

To begin with, we compactify \(X\) and \(Y\) using Hironaka's resolution of singularities. Therefore, we may assume \(f:(X,E)\to (Y,D)\) to be a morphism of log smooth pairs with \(E=f^{-1}(D)\) such that \(f|_{X\setminus E}:X\setminus E\to Y\setminus D\) is the initial smooth morphism of quasi-projective varieties.

The forward implication is immediate due to Maehara \cite{Maehara86}*{Corollary 2}: when \(Y\setminus D\) is of log general type, then \(\bar\k(X\setminus E)= \k(F)+\dim Y\). It remains to prove the converse implication, assuming \(\bar\k(X\setminus E)= \k(F)+\dim Y\). Recall the Easy Addition formula \cite{Mori87}*{Corollary 1.7}, applied to the Cartier divisor \(K_X+E\):
\[
\bar\k(X\setminus E)=\k(X,K_X+E)\le \k(F)+\dim Y.
\]
The equality holds if and only if there exists a positive integer \(N\) and an ample divisor \(A\) on \(Y\) such that
\[
f^*A\subset \w_X(E)^{\tensor N},
\]
by \cite{Mori87}*{Proposition 1.14}. This is an adjoint of the inclusion \(A\subset f_*(\w_X(E)^{\tensor N})\). Applying Corollary \ref{cor: logarithmic fiber product trick}, we obtain the following inclusions:
\begin{align*}
\left(A\tensor \left(\w_Y(D)^{\tensor -N}\right)\right)^{\tensor N}
&\subset\left[\bigotimes^{N} f_* \left(\w_X(E)/\w_Y(D)^{\tensor N}\right)\right]^{**}\\
&\subset\left[f^{(N)}_*\left(\w_{X^{(N)}}(E^{(N)})/\w_Y(D)^{\tensor N}\right)\right]^{**}.
\end{align*}
Therefore, taking \(\L=A\tensor (\w_Y(D)^{\tensor -N})\), there exists a logarithmic Higgs sheaf \(\F_\bullet\) with poles along \(D\) which satisfies the properties of Theorem \ref{thm: construction of logarithmic Higgs sheaves}, by Remark \ref{rmk: assumption in logarithmic Higgs sheaves}. From Lemma \ref{lem: CP setup}, we have a pseudo-effective line bundle \(P\) and a nonzero morphism
\[
A^{\tensor r}\tensor P\to (\Omega_Y(\log D))^{\tensor kr}\tensor (\w_Y(D))^{\tensor Nr}
\]
for some \(r>0\), \(k\ge 0\). By Theorem \ref{thm: CP pseudoeffective}, \(K_Y+D\) is pseudo-effective. Note that \(\w_Y(D)\) is a sub-line bundle in \((\Omega_Y(\log D))^{\tensor \dim Y}\). Hence, there exists some positive integer \(N'\) such that 
\[
A^{\tensor r}\tensor P\to(\Omega_Y(\log D))^{\tensor N'}
\]
is a nonzero morphism. Thus, Theorem \ref{thm: CP main} implies that \(K_Y+D\) is the sum of a big divisor and a pseudo-effective divisor, so it is big as desired.

\subsection{Proof of Theorem \ref{thm: effectivity descends to pseudo-effectivity} (1)}

We use a similar technique as in the proof of Theorem \ref{thm: log general type descends}. Since \(K_X+(1-\epsilon)E\) is effective, there exists a large enough, sufficiently divisible positive integer \(N\) such that
\[
f^*\O_Y(D)\subset \w_X(E)^{\tensor N}.
\]
This is an adjoint of the inclusion \(\O_Y(D)\subset f_*(\w_X(E)^{\tensor N})\). Applying Corollary \ref{cor: logarithmic fiber product trick} as in the proof of Theorem \ref{thm: log general type descends}, we get
\[
\left(\O_Y(D)\tensor \left(\w_Y(D)^{\tensor -N}\right)\right)^{\tensor N}
\subset\left[f^{(N)}_*\left(\w_{X^{(N)}}(E^{(N)})/\w_Y(D)^{\tensor N}\right)\right]^{**},
\]
and obtain a nonzero morphism
\[
\O_Y(D)^{\tensor r}\tensor P\to (\Omega_Y(\log D))^{\tensor kr}\tensor (\w_Y(D))^{\tensor Nr}
\]
induced by the logarithmic Higgs sheaf construction with poles along \(D\). Here, \(r>0\), \(k\ge 0\), and \(P\) is a pseudo-effective line bundle. By Theorem \ref{thm: CP pseudoeffective}, \(K_Y+D\) is pseudo-effective. From the split inclusion \(\w_Y(D)\subset (\Omega_Y(\log D))^{\tensor \dim Y}\), there exists some positive integer \(N'\) such that
\[
\O_Y(D)^{\tensor r}\tensor P\to(\Omega_Y(\log D))^{\tensor N'}
\]
is a nonzero morphism. If \(Q\) is its cokernel, then \(c_1(Q)\) is pseudo-effective by Theorem \ref{thm: CP main}. Since
\begin{align*}
c_1((\Omega_Y(\log D))^{\tensor N'})&=N'(\dim Y)^{N'-1}c_1(K_Y+D)\\
&=rc_1(D)+c_1(P)+c_1(Q)
\end{align*}
and \(P, Q\) are pseudo-effective, there exists \(\delta>0\) such that \(K_Y+(1-\delta)D\) is pseudo-effective.

The sum of a big divisor and a pseudo-effective divisor is big. If \(-K_Y+ND\) is big for some non-negative integer \(N\), then the sum
\[
(N+1-\delta)D=(-K_Y+ND)+(K_Y+(1-\delta)D)
\]
is big. Hence \(D\) is big, and therefore
\[
K_Y+D=(K_Y+(1-\delta)D)+\delta D
\]
is also big.

\subsection{Proof of Theorem \ref{thm: degree of discriminant locus}}
Let \(\mu:(\widetilde Y,\widetilde D)\to (Y,\Delta(f))\) be a log resolution of \((Y,\Delta(f))\). By taking a suitable resolution \(\widetilde X\) of the main component of \(X\times_Y \widetilde Y\), we obtain an induced morphism \(\tilde f:(\widetilde X,\widetilde E)\to (\widetilde Y,\widetilde D)\) of projective log smooth pairs with \(\widetilde E:=\tilde f^{-1}(\widetilde D)\), and \(\tilde f|_{\widetilde X\setminus \widetilde E}\) is smooth. By Theorem \ref{thm: log general type descends} and Theorem \ref{thm: effectivity descends to pseudo-effectivity} (1), \(K_{\widetilde Y}+\widetilde D\) is big in all cases, which implies that \(Y\setminus \Delta(f)\) is of log general type. Therefore, from Lemma \ref{lem: weakly positive and big sheaves} (1), \(K_Y+D\) is big.

\section{Popa's superadditivity of logarithmic Kodaira dimension}
\label{sec: additivity of logarithmic Kodaira dimension}

Popa conjectured recently that for a smooth projective morphism with connected fibers, the (logarithmic) Kodaira dimension is additive.

\begin{conj}[\cite{Popa21}*{Conjecture 3.1}]
\label{conj: additivity of logarithmic Kodaira dimension}
Let \(f:X\to Y\) be a smooth projective morphism with connected fibers between smooth quasi-projective varieties. Then
\[
\bar\k(X)=\bar\k(Y)+\k(F)
\]
where \(F\) is the general fiber of \(f\).
\end{conj}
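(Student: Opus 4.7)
Since this is a conjecture of Popa, the plan is to establish it conditionally on standard hypotheses from the minimal model program, as an unconditional proof appears out of reach with current techniques. The strategy is to split the equality into the two opposite inequalities \(\bar\k(X)\le\bar\k(Y)+\k(F)\) (superadditivity) and \(\bar\k(X)\ge\bar\k(Y)+\k(F)\) (the logarithmic Iitaka conjecture), and to treat each separately by invoking, respectively, the logarithmic base change machinery developed earlier in this paper and a known case of the log Iitaka conjecture from the literature.

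For the superadditivity direction, the plan is to apply Theorem \ref{thm: superadditivity of logarithmic Kodaira dimension} directly: when \(\bar\k(Y)\ge 0\), the inequality holds provided the very general fiber of the log Iitaka fibration of \(Y\) admits a good minimal model; when \(\bar\k(Y)=-\infty\), Theorem \ref{thm: effectivity descends to pseudo-effectivity} (2) together with the non-vanishing conjecture for log canonical pairs forces \(\bar\k(X)=-\infty\), in which case both sides agree under the convention \(-\infty+\k(F)=-\infty\). For the subadditivity direction, the plan is to invoke Hashizume's theorem \cite{Hashizume20}*{Theorem 1.2}, which establishes the log Iitaka conjecture whenever the general fiber admits a good minimal model; since \(f\) is smooth projective, \(F\) is a smooth projective variety and this reduces to an MMP statement purely about \(F\).

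Combining the two inequalities yields the desired equality, under the collective hypotheses that (i) \(F\) admits a good minimal model, (ii) the very general fiber of the log Iitaka fibration of \(Y\) admits a good minimal model, and (iii) the non-vanishing conjecture holds for log canonical pairs in the relevant dimensions. The main obstacle is clearly the reliance on these open conjectures. Unconditional progress seems possible only when \(\dim Y\le 3\): Corollary \ref{cor: superadditivity of logarithmic Kodaira dimension} handles superadditivity in that range, while the subadditivity direction for \(\dim Y\le 3\) is classical. Hypothesis (ii) looks the most delicate, since it enters precisely at the step where one passes from pseudo-effectivity of the log canonical divisor of the base to its bigness via the logarithmic fiber product trick (Corollary \ref{cor: logarithmic fiber product trick}); removing it would likely require either a refinement of the logarithmic base change theorem or a substantially different argument, for instance one that works directly on a minimal model of the log Iitaka base rather than on a smooth compactification.
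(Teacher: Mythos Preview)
This statement is a \emph{conjecture}, and the paper does not claim or provide a proof of it. What the paper does is exactly the conditional program you outline: it observes that the equality decomposes into the log Iitaka inequality \(\bar\k(X)\ge\bar\k(Y)+\k(F)\) and the superadditivity inequality \(\bar\k(X)\le\bar\k(Y)+\k(F)\), proves the latter under MMP hypotheses (Theorem \ref{thm: superadditivity of logarithmic Kodaira dimension}, with the \(\bar\k(Y)=-\infty\) case handled via Theorem \ref{thm: effectivity descends to pseudo-effectivity} (2) plus non-vanishing), and cites Hashizume for the former when \(F\) has a good minimal model. In this sense your proposal matches the paper's own discussion.

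There is, however, one concrete error. You assert that ``the subadditivity direction for \(\dim Y\le 3\) is classical.'' The paper explicitly contradicts this: immediately after Corollary \ref{cor: superadditivity of logarithmic Kodaira dimension} it states that the logarithmic Iitaka conjecture is known when \(Y\) is a quasi-projective curve, but \emph{not} when \(Y\) is an arbitrary quasi-projective surface or threefold. This is why Corollary \ref{cor: superadditivity of logarithmic Kodaira dimension} only asserts equality for \(\dim Y=1\), and only superadditivity for \(\dim Y\le 3\). So your claim that the full conjecture is unconditional for \(\dim Y\le 3\) is not supported.

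A smaller inaccuracy: hypothesis (ii) does not enter at a ``pseudo-effectivity to bigness'' step. In the proof of Theorem \ref{thm: superadditivity of logarithmic Kodaira dimension} it is used to guarantee that the very general fiber \((G,D|_G)\) of the log Iitaka fibration satisfies \(\k_\sigma(G,K_G+D|_G)=0\), so that Proposition \ref{prop: superadditivity numerical dimension zero} applies and one can invoke the Easy Addition formula along \(\eta\circ f\).
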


As mentioned after Corollary \ref{cor: superadditivity of logarithmic Kodaira dimension}, the subadditivity \(\bar\k(X)\ge\bar\k(Y)+\k(F)\) is implied by the logarithmic Iitaka conjecture. Hence, Popa's conjecture suggests its counterpart, namely the superadditivity \(\bar\k(X)\le\bar\k(Y)+\k(F)\), when the morphism is smooth.

As a preparation, recall that the numerical log Kodaira dimension of \(Y\) is defined as
\[
\bar\k_{\sigma}(Y):=\k_\sigma(\bar Y,K_{\bar Y}+D),
\]
where \(\bar Y\) is a compactification of \(Y\) with boundary a reduced simple normal crossing divisor \(D\) and \(\k_\sigma\) is Nakayama's numerical dimension \cite{Nakayama04}*{V.2.5 Definition}. This is well defined, independent of the choice of a compactification. Additionally, it is well known that if \(\bar \k_\sigma(Y)=0\), then \(\bar \k(Y)=0\) (see Kawamata \cite{Kawamata13}*{Theorem 1} combined with Nakayama \cite{Nakayama04}*{V.2.7 Proposition (8)}).

We first prove superadditivity under the additional assumption that the numerical log Kodaira dimension of the base \(Y\) is equal to zero.

\begin{prop}
\label{prop: superadditivity numerical dimension zero}
With the notation in Conjecture \ref{conj: additivity of logarithmic Kodaira dimension}, assume that \(\bar\k_{\sigma}(Y)=0\). Then
\[
\bar \k(X)\le\k(F).
\]
\end{prop}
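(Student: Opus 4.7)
I will argue by contradiction. Suppose $\bar\k(X) \ge \k(F)+1$; note this already forces $\k(F)\ge 0$, since otherwise $f_*\w_X(E)^{\tensor N} = 0$ and $\bar\k(X) = -\infty$. After compactification we may assume $f:(X,E)\to(Y,D)$ is a morphism of projective log smooth pairs with $E = f^{-1}(D)$ and $f|_{X\setminus E}$ smooth. The strict inequality $\k(X, K_X+E) > \k(F, K_F)$ yields, by applying Mori's Proposition~1.14 of \cite{Mori87} in its refined form (analyzing the Stein factorization of the Iitaka fibration $\phi: X\dashrightarrow Z$ of $K_X+E$ together with $f$, via the image of $(f,\phi)$ inside $Y\times Z$, whose projection to $Y$ has general fibers $\phi(F_y)$ of dimension $\k(F)$ over which $q^*H$ is ample), a line bundle $A$ on $Y$ with $\k(Y,A) \ge \bar\k(X) - \k(F) \ge 1$ and an injection $f^*A \hookrightarrow \w_X(E)^{\tensor N}$ for some $N>0$. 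Adjointly, $A \tensor \w_Y(D)^{-N} \hookrightarrow f_*\bigl(\w_X(E)/\w_Y(D)^{\tensor N}\bigr)$.

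Iterating the Logarithmic Fiber Product Trick (Corollary~\ref{cor: logarithmic fiber product trick}) $N$ times produces
\[
(A \tensor \w_Y(D)^{-N})^{\tensor N} \hookrightarrow \bigl[f^{(N)}_*\bigl(\w_{X^{(N)}}(E^{(N)}) / \w_Y(D)^{\tensor N}\bigr)\bigr]^{**},
\]
which is exactly the hypothesis of Remark~\ref{rmk: assumption in logarithmic Higgs sheaves} with $\L := A\tensor\w_Y(D)^{-N}$, applied to $f^{(N)}$. Theorem~\ref{thm: construction of logarithmic Higgs sheaves} then delivers a graded logarithmic Higgs sheaf with poles along $D$ satisfying (a)--(d), and Lemma~\ref{lem: CP setup} yields a pseudo-effective line bundle $P$ together with a nonzero morphism
\[
A^{\tensor r}\tensor P \to \Omega_Y(\log D)^{\tensor kr}\tensor \w_Y(D)^{\tensor Nr}
\]
for some $r>0$ and $k\ge 0$. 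Composing with the split inclusion $\w_Y(D) \hookrightarrow \Omega_Y(\log D)^{\tensor \dim Y}$ upgrades this to a nonzero morphism $A^{\tensor r}\tensor P \hookrightarrow \Omega_Y(\log D)^{\tensor N'}$ for $N' = kr + Nr\dim Y$.

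Since $\bar\k_\sigma(Y) = 0$ forces $K_Y+D$ pseudo-effective, Theorem~\ref{thm: CP main} applies: the cokernel $Q$ has pseudo-effective first Chern class. Comparing first Chern classes as in the proof of Theorem~\ref{thm: effectivity descends to pseudo-effectivity}(1),
\[
N'(\dim Y)^{N'-1}\, c_1(K_Y+D) = r\, c_1(A) + c_1(P) + c_1(Q),
\]
so $c_1(K_Y+D) - \alpha\, c_1(A)$ is pseudo-effective for some $\alpha > 0$. By the monotonicity of Nakayama's numerical dimension under addition of pseudo-effective classes (\cite{Nakayama04}*{V.2.7}),
\[
\bar\k_\sigma(Y) = \k_\sigma(K_Y+D) \ge \k_\sigma(\alpha A) \ge \k(Y,A) \ge 1,
\]
contradicting the hypothesis $\bar\k_\sigma(Y) = 0$. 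The main obstacle is the very first step: while the equality case of Easy Addition used in Theorem~\ref{thm: log general type descends} follows immediately from Mori's Proposition~1.14, extracting $A$ of positive $\k(Y,A)$ from the strict inequality $\bar\k(X) > \k(F)$ requires its refined form via the Stein factorization of the Iitaka fibration of $K_X+E$ — a step that does not appear in the proofs of Theorems~\ref{thm: log general type descends} or~\ref{thm: effectivity descends to pseudo-effectivity}(1). Once $A$ is in hand, the rest of the argument is a direct transposition of those earlier proofs, with the key upgrade being that the final inequality is drawn in numerical dimension rather than Iitaka dimension.
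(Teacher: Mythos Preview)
Your overall architecture matches the paper's: argue by contradiction, feed a line bundle of positive Iitaka dimension into the logarithmic fiber product trick, build the Higgs sheaf, and contradict $\bar\k_\sigma(Y)=0$ via Campana--P\u aun together with Nakayama's monotonicity for $\k_\sigma$. The gap is precisely where you flag it --- the production of $A$.

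What you need is a line bundle $A$ on $Y$ with $\k(Y,A)\ge 1$ and an injection $A\hookrightarrow f_*\w_X(E)^{\tensor N}$ (equivalently $f^*A\hookrightarrow\w_X(E)^{\tensor N}$). The hypothesis $\bar\k(X)>\k(F)$ only gives $h^0\bigl(f_*\w_X(E)^{\tensor N}\bigr)>\mathrm{rank}$ for some $N$, and this does \emph{not} force the existence of a sub-line-bundle of positive Iitaka dimension: for instance $V=T_{\P^2}(-1)$ is globally generated with $h^0=3>2=\mathrm{rank}$, yet by stability every sub-line-bundle has degree $\le 0$. Your Iitaka-fibration sketch does not escape this either --- pushing $q^*H$ down along $p:W\to Y$ lands you back in the same $h^0>\mathrm{rank}$ situation on $Y$, and Mori's Proposition~1.14 as stated handles only the equality case of Easy Addition. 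The claim $\dim\phi(F_y)=\k(F)$ also needs an argument; a priori one only has $\le$.

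The paper bypasses this by passing to a determinant. It takes $\F\subset f_*\w_X(E)^{\tensor N}$ to be the subsheaf generated by global sections, so $h^0(\F)>s:=\mathrm{rank}\,\F$, and invokes the elementary Lemma~\ref{lem: producing 2 sections} to conclude $h^0(\widehat\det\F)\ge 2$, hence $\k_\sigma(\widehat\det\F)\ge 1$. With $A:=\widehat\det\F$ one has $A\subset\bigl[\bigwedge^s f_*\w_X(E)^{\tensor N}\bigr]^{**}\subset\bigl[\bigotimes^s f_*\w_X(E)^{\tensor N}\bigr]^{**}$, so $A$ sits in an $s$-fold tensor power of the pushforward rather than in a single copy --- exactly what the determinant buys and what a direct sub-line-bundle cannot. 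The fiber product trick is then iterated $Ns$ times (not $N$), and from that point your argument goes through verbatim.
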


The proof of this proposition uses the logarithmic base change theorem as in the proofs in Section \ref{sec: proofs}, with the following lemma. This is essentially \cite{PS22}*{Lemma 4.2}, modified for our use in the case of torsion free sheaves. We include its proof for completeness.

\begin{lem}
\label{lem: producing 2 sections}
Let \(\F\) be a torsion free sheaf on a normal projective variety \(Z\), globally generated on the complement of a codimension 2 locus. If \(h^0(Z,\F)>\mathrm{rank}\, \F\), then \(h^0(Z,\widehat\det \F)\ge 2\).
\end{lem}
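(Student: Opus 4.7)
My plan is to produce two $k$-linearly independent global sections of $\widehat{\det}\F$ via the natural wedge map $\bigwedge^r H^0(Z,\F) \to H^0(Z, \widehat{\det}\F)$, where $r := \mathrm{rank}\,\F$. Let $V := H^0(Z,\F)$ and let $\eta$ denote the generic point of $Z$; at $\eta$ the sheaf $\widehat{\det}\F$ restricts to the one-dimensional $k(Z)$-vector space $\wedge^r \F_\eta$. Because $\F$ is globally generated on the complement of a codimension-$2$ locus, the point $\eta$ lies in the generation locus, so I can start by choosing $s_1,\dots,s_r \in V$ whose restrictions form a $k(Z)$-basis of $\F_\eta$; then $\sigma := s_1 \wedge \cdots \wedge s_r$ is a nonzero section of $\widehat{\det}\F$, since $\sigma_\eta$ generates $\widehat{\det}\F_\eta$.

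Next, since $\dim_k V > r$, I pick $s_{r+1} \in V$ outside the $k$-span of $s_1,\dots,s_r$, and at $\eta$ write $s_{r+1} = \sum_{j=1}^r a_j s_j$ with $a_j \in k(Z)$. For each $j$ I form
\[
\sigma_j := s_1 \wedge \cdots \wedge \widehat{s_j} \wedge \cdots \wedge s_r \wedge s_{r+1} \in H^0(Z, \widehat{\det}\F),
\]
and a one-line computation in $\widehat{\det}\F_\eta$ yields $\sigma_j = (-1)^{r-j}\, a_j\, \sigma$.

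The crux of the argument — and the step I expect to be the main content — is to exclude the possibility that every $a_j$ is a constant of $k$. If it were, then $s_{r+1} - \sum_j a_j s_j$ would be a global section of the torsion-free sheaf $\F$ vanishing at $\eta$, hence identically zero, which would put $s_{r+1}$ in $\mathrm{span}_k(s_1,\dots,s_r)$ against our choice. Therefore some $a_{j_0} \in k(Z) \setminus k$, and then $\sigma$ and $\sigma_{j_0}$ are two global sections of $\widehat{\det}\F$ whose ratio in the function field is the non-constant element $\pm a_{j_0}$; they must then be linearly independent over $k$, yielding $h^0(Z, \widehat{\det}\F) \geq 2$.

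The only formalism to keep in mind is that wedges of global sections truly define sections of the reflexive hull $\widehat{\det}\F$, and that $k$-linear independence of global sections of a rank-one reflexive sheaf on a normal variety can be tested at $\eta$ via the injection $H^0(Z, \widehat{\det}\F) \hookrightarrow \widehat{\det}\F_\eta$ (valid because the sheaf is torsion free on an irreducible base). I don't expect any serious obstacle beyond organizing these routine checks.
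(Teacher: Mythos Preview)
Your argument is correct, and it follows a genuinely different route from the paper's proof. The paper first replaces \(\F\) by its reflexive hull, chooses a generically isomorphic map \(\alpha_1:\O_Z^{\oplus r}\to\F\), and then argues that \(\alpha_1\) must drop rank at some closed point \(z\) in the locus where \(\F\) is locally free and globally generated (otherwise the reflexive hull of \(\alpha_1\) would be a global isomorphism, contradicting \(h^0>r\)); global generation at that particular \(z\) then supplies a second map \(\alpha_2\) which is an isomorphism at \(z\), and \(\widehat{\det}(\alpha_1)\), \(\widehat{\det}(\alpha_2)\) are distinguished by their behavior at \(z\). By contrast, you never leave the generic point: you expand the extra section \(s_{r+1}\) in the basis \(s_1,\dots,s_r\) over \(k(Z)\) and observe that torsion-freeness forces at least one coefficient \(a_{j_0}\) to be non-constant, so that \(\sigma\) and \(\sigma_{j_0}\) have non-constant ratio in \(\widehat{\det}\F_\eta\). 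Your approach is slightly more economical with the hypotheses---you only use global generation at \(\eta\), not on the full codimension-two complement---while the paper's proof has the advantage of producing an explicit closed point witnessing the linear independence.
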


\begin{proof}
Denote \(s:=\mathrm{rank}\, \F\). Via taking the double dual, we may assume that \(\F\) is reflexive. Let \(V\subset Z\) be the complement of a codimension 2 subvariety in \(Z\), over which \(\F\) is a globally generated locally free sheaf of rank \(s\). Consequently, there exists a generically isomorphic morphism
\[
\alpha_1:\O_{Z}^{\oplus s}\to \F.
\]
If \(\alpha_1\) is an isomorphism on \(V\), then the reflexive hull of \(\alpha_1\) is an isomorphism on \(Z\), which contradicts \(h^0(Z,\F)>s\). Hence, there exists a point \(z\in V\) such that \(\alpha_1|_z\) is not surjective as a linear map of vector spaces. Since \(\F\) is globally generated at \(z\), there exists a morphism
\[
\alpha_2:\O_{Z}^{\oplus s}\to \F
\]
such that \(\alpha_2|_z\) is an isomorphism of vector spaces. Then we have the following two nonzero global sections
\[
\widehat\det(\alpha_1):\O_{Z}\to \widehat\det\F, \quad \widehat\det (\alpha_2):\O_{Z}\to \widehat\det\F,
\]
such that \(\widehat\det (\alpha_1)\) vanishes at \(z\) and \(\widehat\det (\alpha_2)\) is an isomorphism at \(z\). Therefore, we have the conclusion.
\end{proof}

\begin{proof}[Proof of Proposition \ref{prop: superadditivity numerical dimension zero}]
We argue by contradiction: assume that \(\bar \k(X)>\k(F)\). As at the beginning of the proof of Theorem \ref{thm: log general type descends}, let \(f:(\bar X,E)\to (\bar Y,D)\) be a morphism of log smooth pairs with \(E=f^{-1}(D)\) such that \(f|_{\bar X\setminus E}:\bar X\setminus E\to \bar Y\setminus D\) is the initial smooth morphism. Notice that
\[
\mathrm{rank}\, \left(f_*\left(\w_{\bar X}(E)^{\tensor N}\right)\right)=P_N(F)
\]
where \(P_N(F)\) is the \(N\)-plurigenus of \(F\). Therefore, there exists a positive integer \(N\) such that
\[
h^0\left(\bar Y,f_*\left(\w_{\bar X}(E)^{\tensor N}\right)\right) > \mathrm{rank}\, \left(f_*\left(\w_{\bar X}(E)^{\tensor N}\right)\right).
\]

Let \(\F\) be the subsheaf of \(f_*\left(\w_{\bar X}(E)^{\tensor N}\right)\), generated by its global sections. Therefore, \(\F\) is torsion free and \(h^0(\bar Y,\F)>s:=\mathrm{rank}\, \F\). From the following inclusions
\[
\widehat\det\F\subset\left[\bigwedge^sf_*\left(\w_{\bar X}(E)^{\tensor N}\right)\right]^{**}\subset \left[\bigotimes^sf_*\left(\w_{\bar X}(E)^{\tensor N}\right)\right]^{**},
\]
we have 
\begin{align*}
\left(\widehat\det\F\tensor \left(\w_{\bar Y}(D)^{\tensor -Ns}\right)\right)^{\tensor N}
&\subset\left[\bigotimes^{Ns} f_* \left(\w_{\bar X}(E)/\w_{\bar Y}(D)^{\tensor N}\right)\right]^{**}\\
&\subset\left[f^{(Ns)}_*\left(\w_{\bar X^{(Ns)}}(E^{(Ns)})/\w_{\bar Y}(D)^{\tensor N}\right)\right]^{**}
\end{align*}
by Corollary \ref{cor: logarithmic fiber product trick}. As in the proofs of Theorem \ref{thm: log general type descends} and Theorem \ref{thm: effectivity descends to pseudo-effectivity} (1), we obtain a nonzero morphism
\[
A^{\tensor r}\tensor P\to (\Omega_{\bar Y}(\log D))^{\tensor kr}\tensor (\w_{\bar Y}(D))^{\tensor Nsr},
\]
where \(A:=\widehat\det\F\), \(r>0\), \(k\ge 0\), and \(P\) is a pseudo-effective line bundle. Hence, there exists some positive integer \(N'\) and an injection
\[
A^{\tensor r}\tensor P\to(\Omega_{\bar Y}(\log D))^{\tensor N'}
\]
with the quotient \(Q\), which implies that
\[
N'(\dim \bar Y)^{N'-1}c_1(K_{\bar Y}+D)=rc_1(A)+c_1(P)+c_1(Q).
\]
We now take Nakayama's numerical dimension on both sides. Due to Nakayama \cite{Nakayama04}*{V.2.7 Proposition (1)} and Theorem \ref{thm: CP main}, we have
\[
\bar \k_\sigma(Y)=\k_\sigma(\bar Y,K_{\bar Y}+D)\ge \k_\sigma(\bar Y,A).
\]
However by Lemma \ref{lem: producing 2 sections}, we have \(\k_\sigma(\bar Y,A)\ge 1\), which contradicts \(\bar \k_\sigma(Y)=0\).
\end{proof}

Theorem \ref{thm: superadditivity of logarithmic Kodaira dimension} states the superadditivity of the log Kodaira dimension when the very general fiber of the log Iitaka fibration of the base has a good minimal model. This is obtained as a consequence of Proposition \ref{prop: superadditivity numerical dimension zero} via the Easy addition formula.

\begin{proof}[Proof of Theorem \ref{thm: superadditivity of logarithmic Kodaira dimension}]
Let \(\mu:(\widetilde Y,\widetilde D)\to (\bar Y,D)\) be a log resolution  with the log Iitaka fibration \(\eta:(\widetilde Y, \widetilde D)\to I\). Since the log Kodaira dimension is invariant under birational modifications (see e.g. \cite{Fujino20}*{Lemma 2.3.34}), we have \(\bar \k(\widetilde Y\setminus \widetilde D)=\bar \k(Y)\) and \(\bar \k(X\times_Y\widetilde Y\setminus \widetilde D)=\bar\k(X)\). Hence, taking the base change of \(f\) via \(\mu\), we may assume \((\bar Y,D)=(\widetilde Y, \widetilde D)\).

Let \(f:(\bar X,E)\to (\bar Y,D)\) be a morphism of log smooth pairs with \(E=f^{-1}(D)\) such that \(f|_{\bar X\setminus E}:\bar X\setminus E\to \bar Y\setminus D\) is the initial smooth morphism. Then we have the following composition of morphisms
\[
(\bar X, E)\xrightarrow{f} (\bar Y, D)\xrightarrow{\eta} I. 
\]
Denote by \((G, D|_G)\) (resp. \((H,E|_H)\)) the very general fiber of \(\eta\) (resp. \(\eta\circ f\)). Notice that the restriction map
\[
f|_{H}: (H,E|_H)\to (G, D|_G)
\]
is a morphism of log smooth pairs with \(E|_H=f|_H^{-1}(D|_G)\), and \(f|_{H\setminus E|_H}\) is smooth. From the assumption, the pair \((G, D|_G)\) has a good minimal model of log Kodaira dimension zero, which implies that
\[
\k_\sigma(G,K_G+D|_G)=0.
\]
By Proposition \ref{prop: superadditivity numerical dimension zero}, we have
\[
\k(H,K_H+E|_H)\le \k(F),
\]
hence, by the Easy addition formula applied to \(\eta\circ f\), we obtain
\begin{align*}
\bar\k(X)=\k(\bar X,K_{\bar X}+E)&\le \k(H,K_H+E|_H)+\dim I\\
&\le \k(F)+\bar\k(Y)
\end{align*}
which concludes the proof.
\end{proof}

In the remaining section, we state the logarithmic Iitaka conjecture for projective morphisms over quasi-projective curves. This is known to hold for experts, but not explicitly stated in the literature, so we include a short proof.

\begin{prop}
\label{prop: log Iitaka conjecture over curve}
Let \(f:X\to Y\) be a projective morphism with connected fibers between smooth quasi-projective varieties. If \(\dim Y=1\), then
\[
\bar \k(X)\ge\bar \k(Y)+\k(F).
\]
\end{prop}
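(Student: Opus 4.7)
The plan is to compactify and carry out a case analysis on the three possible values $\bar\kappa(Y)\in\{-\infty,0,1\}$. First, using Hironaka I would extend $f$ to a morphism $\bar f:\bar X\to\bar Y$ of smooth projective varieties, with $D:=\bar Y\setminus Y$ a reduced finite set of points and, after blowing up $\bar X$, with $E:=\bar f^{-1}(D)_{\mathrm{red}}$ a simple normal crossings divisor. The target inequality then becomes $\kappa(\bar X,K_{\bar X}+E)\ge\kappa(\bar Y,K_{\bar Y}+D)+\kappa(F)$.

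The case $\bar\kappa(Y)=-\infty$ is vacuous. When $\bar\kappa(Y)=1$, the base is of log general type, and Maehara's logarithmic $C_{n,m}$ theorem (cited after Theorem \ref{thm: log general type descends}) gives the stronger equality $\bar\kappa(X)=\kappa(F)+\dim Y=\kappa(F)+\bar\kappa(Y)$. The substantive case is $\bar\kappa(Y)=0$: since $\deg(K_{\bar Y}+D)=0$ and $\dim\bar Y=1$, this forces either (a) $\bar Y$ an elliptic curve with $D=0$, or (b) $\bar Y\cong\mathbb P^1$ with $|D|=2$.

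In subcase (a) the variety $Y=\bar Y$ is already projective, so $X$ is projective and $E=0$, and the claim becomes the classical Iitaka subadditivity $C_{n,1}$ for fibrations over an elliptic base (Viehweg). In subcase (b) the plan is to exploit the triviality $\omega_{\bar Y}(D)\cong\mathcal O_{\mathbb P^1}$ together with the logarithmic analogue of Viehweg's weak positivity (a standard consequence of the machinery behind Theorem \ref{thm: construction of logarithmic Higgs sheaves}, after passing via semistable reduction to a model where $\bar f$ is log smooth over $Y$). This yields that $\mathcal F_N:=\bar f_*\bigl(\omega_{\bar X}(E)/\omega_{\bar Y}(D)\bigr)^{\otimes N}$ is a weakly positive sheaf on $\mathbb P^1$ of rank $P_N(F)$; hence it splits as $\bigoplus\mathcal O_{\mathbb P^1}(a_i)$ with each $a_i\ge 0$, so that $h^0(\mathbb P^1,\mathcal F_N)\ge P_N(F)$. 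Combining with the projection formula and the identity $\omega_{\bar Y}(D)\cong\mathcal O$ gives $\bar f_*\omega_{\bar X}(E)^{\otimes N}\cong\mathcal F_N$, and therefore $h^0(\bar X,N(K_{\bar X}+E))\ge P_N(F)\sim N^{\kappa(F)}$, yielding $\bar\kappa(X)\ge\kappa(F)=\bar\kappa(Y)+\kappa(F)$.

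The main obstacle is the case $\bar\kappa(Y)=0$, where the two shapes of log-Calabi--Yau curve base require different inputs: the elliptic subcase reduces to the classical $C_{n,1}$, and the $\mathbb P^1$ subcase crucially uses both logarithmic weak positivity of the log relative pluricanonical pushforward and the fortuitous triviality of $\omega_{\mathbb P^1}(D)$ when $|D|=2$. No single uniform argument seems to handle both subcases simultaneously, since on an elliptic base a weakly positive (nef) sheaf of rank $r$ need not satisfy $h^0\ge r$.
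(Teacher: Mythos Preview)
Your proposal is correct and follows essentially the same approach as the paper: case analysis on $\bar\kappa(Y)$, Maehara for the log general type base, Kawamata's $C_{n,1}$ (which you attribute to Viehweg) for the elliptic base, and semistable reduction plus Viehweg's weak positivity for $\mathbb P^1$ minus two points. The paper makes the semistable reduction step more explicit---performing a degree-$d$ cyclic cover $\mathbb P^1\to\mathbb P^1$ branched at $\{0,\infty\}$ and invoking Iitaka's invariance of $\bar\kappa$ under finite \'etale covers to justify replacing $X$ by the new model---after which $\omega_{\bar X}(E)\cong\omega_{\bar X/\mathbb P^1}$ on the nose and classical weak positivity applies, so your ``logarithmic weak positivity'' is exactly this.
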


\begin{proof}
When \(Y\) is of log general type, the conclusion follows from Maehara \cite{Maehara86}*{Corollary 2}. When \(Y\) is a projective curve of genus \(1\), it follows from Kawamata \cite{Kawamata82}*{Theorem 2}. When \(Y=\P^1\) or \(Y=\P^1-\{\infty\}\), we have \(\bar \k(Y)=-\infty\).

It suffices to prove \(\bar \k(X)\ge\k(F)\) when \(Y=\P^1-\{0,\infty\}\). To begin with, we compactify the morphism \(f\) as \(\bar f:(\bar X,E)\to (\P^1, D)\) so that \(D=0+\infty\), \(E=\bar f^{-1}(D)\), and \(\bar f|_{\bar X\setminus E}=f\). As in the proof of Viehweg-Zuo \cite{VZ01}, there exists a cyclic cover \(\P^1\to \P^1\) of degree \(d\), \'etale over \(\C^*=\P^1 - \{0,\infty\}\), which induces a semistable reduction at \(\{0,\infty\}\) for a sufficiently large and divisible \(d\) (Kempf et al. \cite{KKMS}). Let the resulting semistable reduction at \(\{0,\infty\}\) be the following commutative diagram:
\begin{displaymath}
\xymatrix{
{\bar X'}\ar[r]^-{\nu}\ar[d]_-{\bar f'}& {\bar X}\ar[d]_-{\bar f}\\
{\P^1}\ar[r]^{z\mapsto z^d}&{\P^1}
}
\end{displaymath}
Denote \(X':=\nu^{-1}(X)\). Since \(\nu|_{X'}:X'\to X\) is a finite \'etale morphism, we have \(\bar \k(X')=\bar\k(X)\) by Iitaka \cite{Iitaka77}*{Theorem 3}. Therefore, we reduce to the case where the fibers of \(\bar f\) over \(\{0,\infty\}\) are reduced simple normal crossing divisors.

Since \(\O_{\bar X}(E)\isom\bar f^*\O_{\P^1}(2)\isom\bar f^*\w_{\P^1}^{-1}\), we have \(\w_{\bar X}(E)\isom\w_{\bar X/\P^1}\). By Viehweg \cite{Viehweg83}*{Theorem III}, \(\bar f_* \w_{\bar X/\P^1}^{k}\) is weakly positive for all \(k>0\). Therefore, the vector bundle \(\bar f_* \w_{\bar X/\P^1}^{k}\) decomposes into a direct sum of line bundles of non-negative degree on \(\P^1\), which implies that
\[
h^0(\P^1, \bar f_* \w_{\bar X/\P^1}^{k})\ge \mathrm{rank}\, \left(\bar f_* \w_{\bar X/\P^1}^{k}\right)=P_k(F)
\]
where \(P_k(F)\) is the \(k\)-plurigenus of \(F\). Hence, \(h^0(\bar X, k(K_{\bar X}+E))\ge P_k(F)\) for all \(k>0\), so we have \(\bar \k(X)\ge\k(F)\).
\end{proof}

In particular, Theorem \ref{thm: superadditivity of logarithmic Kodaira dimension} and Proposition \ref{prop: log Iitaka conjecture over curve} imply Corollary \ref{cor: superadditivity of logarithmic Kodaira dimension}.

\section{Uniruledness of fibrations over projective spaces}
\label{sec: uniruledness}

Assume we have a surjective projective morphism \(f:X\to \P^1\) with at most two singular fibers. From Viehweg-Zuo \cite{VZ01}, we have \(\k(X)=-\infty\), and the non-vanishing conjecture implies that \(X\) is uniruled. In other words, putting these two together, we have:

\begin{conj}
\label{conj: 2 singular fiber uniruled}
Let \(X\) be a smooth projective variety, and \(f:X\to \P^1\) be a surjective morphism with at most \(2\) singular fibers. Then \(X\) is uniruled.
\end{conj}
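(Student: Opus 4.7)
The plan is to argue by contradiction, combining Viehweg-Zuo with a non-vanishing input. First I would apply \cite{VZ01}*{Theorem 0.2} directly to the morphism $f:X\to \P^1$: since $f$ has at most two singular fibers, their theorem forbids $\k(X)\ge 0$, so $\k(X)=-\infty$. Second, I would recall the characterization of Boucksom-Demailly-P\u aun-Peternell: a smooth projective variety $X$ is uniruled if and only if $K_X$ is not pseudo-effective, so it suffices to show $K_X$ is not pseudo-effective. Third, I would invoke the non-vanishing conjecture, namely that $K_X$ pseudo-effective implies $\k(X)\ge 0$; combined with the first step this yields a contradiction, completing the argument.

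The main obstacle is of course the non-vanishing conjecture itself, which is unresolved in this generality. Under the stronger hypothesis that the general fiber $F$ has a good minimal model, Theorem \ref{thm: 2 singular fiber uniruled} circumvents this as follows: if $X$ were not uniruled then $K_X$ would be pseudo-effective by BDPP, and assuming $\k(X)\ge 0$ one may apply Lai's theorem \cite{Lai11} to promote a good minimal model on the general fiber of the Iitaka fibration of $X$ to one on all of $X$, hence obtain non-vanishing. Without the good minimal model hypothesis this route is blocked.

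A direct attempt using the logarithmic machinery developed in this paper looks unpromising at exactly this boundary. The fiber product trick (Corollary \ref{cor: logarithmic fiber product trick}) ultimately produces inclusions into tensor powers involving $\Omega_{\P^1}(\log\Delta(f))$ and $\w_{\P^1}(\Delta(f))$, and the positivity one can hope to extract via Theorem \ref{thm: CP pseudoeffective} is controlled by $\w_{\P^1}(\Delta(f))$, whose degree $\deg\Delta(f)-2$ is nonpositive precisely when $f$ has at most two singular fibers. This matches the sharpness of Viehweg-Zuo and indicates that unconditional progress will require either partial non-vanishing results tailored to varieties fibered over $\P^1$ with few singular fibers, or symplectic and geometric methods producing rational curves directly, in the spirit of Pieloch \cite{Pieloch21}.
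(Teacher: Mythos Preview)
Your first paragraph is exactly the paper's own treatment: this statement is a \emph{conjecture}, and the paper does not prove it unconditionally. The paper records precisely your heuristic---Viehweg--Zuo gives $\k(X)=-\infty$, and then the non-vanishing conjecture (together with BDPP) would force $X$ to be uniruled. So on the headline statement you match the paper.

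Where you go astray is in your account of how the conditional result (Theorem~\ref{thm: 2 singular fiber uniruled}, proved as part of Theorem~\ref{thm: uniruledness}) circumvents non-vanishing. Your sketch---``$K_X$ pseudo-effective, assuming $\k(X)\ge 0$ apply Lai to get a good minimal model of $X$''---is circular: you need $\k(X)\ge 0$ to even have an Iitaka fibration of $X$, and the hypothesis is on $F$, not on the fibers of the Iitaka fibration of $X$. The footnote citing \cite{Lai11} is only used to pass from $\k(F)\ge\dim F-3$ to ``$F$ has a good minimal model''; it is not applied to $X$.

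The paper's actual route is quite different and does use the logarithmic machinery you dismissed. After semistable reduction over $\{0,\infty\}$, Theorem~\ref{thm: effectivity descends to pseudo-effectivity}(1) forces every $f_*\w_{X/\P^1}^k$ to be a trivial bundle $\O_{\P^1}^{\oplus P_k(F)}$ (any $\O_{\P^1}(1)$ summand would make $K_X+(1-\epsilon)E$ effective, contradicting that $K_{\P^1}+(1-\delta)D$ is not pseudo-effective). This makes the relative canonical ring constant and identifies the Iitaka model of $(X,\w_{X/\P^1})$ with that of $F$. Restricting to the very general fiber $Z'$ of this Iitaka map, one gets $f'_*\w_{Z'/\P^1}^k\isom\O_{\P^1}$; the good-minimal-model hypothesis on $F$ yields a relative good minimal model $Z''\to\P^1$ with $\w_{Z''}^k\isom f''^*\w_{\P^1}^k$, so $K_{Z''}$ is not pseudo-effective and $Z'$ (hence $X$) is uniruled by BDPP. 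So the boundary case $\deg\Delta(f)\le 2$ is handled not by squeezing positivity out of $\w_{\P^1}(\Delta(f))$, but by exploiting its \emph{non}-positivity to force triviality of the direct images.
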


As mentioned in the introduction after Theorem \ref{thm: 2 singular fiber uniruled}, Pieloch \cite{Pieloch21} proved the conjecture when \(f\) has at most one singular fiber, and obtained a partial result when \(f\) has two singular fibers. The proofs are symplectic; the algebro-geometric proofs were previously unknown.

In what follows, we give an algebraic proof of this conjecture under the additional assumption that the general fiber has a good minimal model. The key idea is the invariance of the canonical rings of smooth fibers stated in the following theorem. This contains Theorem \ref{thm: 2 singular fiber uniruled}. We also state an analogous conjecture over a higher dimensional base below.

\begin{thm}
\label{thm: uniruledness}
In the setting of Conjecture \ref{conj: 2 singular fiber uniruled}, assume that the fibers of \(f\) are connected. Then, the canonical rings of all smooth fibers are isomorphic.\footnote{In particular, if the fibers are of general type, then \(f\) is birationally isotrivial.} In addition, if the general fiber \(F\) has a good minimal model, then \(X\) is uniruled.
\end{thm}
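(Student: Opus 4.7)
I would handle the two conclusions separately.

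\emph{Uniruledness.} The Viehweg--Zuo inequality recalled in the introduction \cite{VZ01} forces $\k(X) = -\infty$, since otherwise $f$ would have at least three singular fibers. If $X$ were not uniruled, then by the theorem of Boucksom--Demailly--P\u aun--Peternell $K_X$ would be pseudo-effective; combined with the assumption that the general fiber $F$ has a good minimal model, Lai's theorem \cite{Lai11} produces a good minimal model for $X$ itself, whence $\k(X) \ge 0$ --- a contradiction. Hence $X$ is uniruled.

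\emph{Canonical rings.} It suffices to prove the statement after a cyclic base change $\pi\colon \P^1 \to \P^1$ ramified at $D$ (which keeps $\deg D \le 2$ and makes the new family semistable after resolution), since $\pi$ surjects the new smooth locus onto the original one and the smooth fibers of the pulled-back family are isomorphic to those of $f$. So assume $f$ is itself semistable; then $E = f^*D$ and $\w_X(E) = \w_X \tensor f^*\O(D)$, so
\[
f_*\bigl(\w_X(E)/\w_{\P^1}(D)\bigr)^{\tensor N} \;=\; f_*\w_{X/\P^1}^{\tensor N} \;=:\; \widehat{\mathcal V}_N,
\]
and Viehweg's weak positivity gives $\widehat{\mathcal V}_N$ nef on $\P^1$, hence $\widehat{\mathcal V}_N \isom \bigoplus_i \O_{\P^1}(d_i^N)$ with $d_i^N \ge 0$. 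I claim every $d_i^N$ is zero. Otherwise, pick $d = d_i^N > 0$ and $\L := \O_{\P^1}(d) \hookrightarrow \widehat{\mathcal V}_N$; taking the $N$-th tensor power and applying the logarithmic fiber product trick of Corollary \ref{cor: logarithmic fiber product trick} yields
\[
\L^{\tensor N} \hookrightarrow \bigl[f^{(N)}_*\bigl(\w_{X^{(N)}}(E^{(N)})/\w_{\P^1}(D)^{\tensor N}\bigr)\bigr]^{**}.
\]
By Remark \ref{rmk: assumption in logarithmic Higgs sheaves} and Theorem \ref{thm: construction of logarithmic Higgs sheaves} applied to $f^{(N)}$, this produces a graded logarithmic Higgs sheaf with poles along $D$ containing $\L$ in degree zero, and Lemma \ref{lem: CP setup} supplies a pseudo-effective line bundle $P$ together with a nonzero morphism $\L^{\tensor r} \tensor P \to (\Omega_{\P^1}(\log D))^{\tensor kr}$ for some $r > 0$, $k \ge 0$. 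Since $\deg \Omega_{\P^1}(\log D) = \deg D - 2 \le 0$, $\deg P \ge 0$, and $\deg \L > 0$, the two sides have incompatible degrees --- contradiction. Thus $\widehat{\mathcal V}_N \isom \O_{\P^1}^{\oplus r_N}$ for every $N$; the multiplications $\widehat{\mathcal V}_N \tensor \widehat{\mathcal V}_M \to \widehat{\mathcal V}_{N+M}$ are then $\O_{\P^1}$-linear morphisms of trivial bundles, so given by constant matrices. Hence $\bigoplus_N \widehat{\mathcal V}_N$ is a globally constant family of graded $\C$-algebras on $\P^1$, and its fiber at any smooth point is identified by cohomology and base change with the canonical ring of that fiber. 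All smooth fibers therefore have isomorphic canonical rings.

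\emph{Main obstacle.} The technical heart is the degree-incompatibility argument forcing $d_i^N = 0$: it packages the logarithmic fiber product trick, the logarithmic Higgs sheaf construction, and the Campana--P\u aun criterion into one quantitative bound against the log-Calabi--Yau nature of $(\P^1, D)$ when $\deg D \le 2$. The uniruledness part, by contrast, is a fairly formal synthesis of BDPP, Viehweg--Zuo, and Lai.
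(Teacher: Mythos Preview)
Your argument for the canonical rings is correct and matches the paper's route: the paper applies Theorem~\ref{thm: effectivity descends to pseudo-effectivity}(1) directly (since $K_{\P^1}+(1-\delta)D$ is not pseudo-effective for any $\delta>0$, no positive-degree line bundle embeds in $f_*\omega_{X/\P^1}^N$), whereas you unwind that theorem's proof in this special case via the fiber product trick and Lemma~\ref{lem: CP setup}. Both arrive at $f_*\omega_{X/\P^1}^N\cong\O_{\P^1}^{\oplus P_N(F)}$ and then use constancy of the multiplication maps.

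Your uniruledness argument, however, has a genuine gap. Lai's theorem \cite{Lai11}, as cited in this paper, says: if $\kappa(X)\ge 0$ and the general fiber \emph{of the Iitaka fibration of $X$} has a good minimal model, then $X$ has a good minimal model. You are feeding it the hypotheses ``$K_X$ pseudo-effective'' and ``the general fiber of $f:X\to\P^1$ has a good minimal model,'' neither of which matches: we do not know $\kappa(X)\ge 0$ (that is precisely what you want), and $f$ is not an Iitaka fibration of $X$. Passing from $K_X$ pseudo-effective to $\kappa(X)\ge 0$ is the non-vanishing conjecture, open in this generality.

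The paper's route is more involved and crucially \emph{uses} the canonical-ring triviality you established first. From $f_*\omega_{X/\P^1}^k\cong\O_{\P^1}^{\oplus P_k(F)}$ the relative canonical algebra is $R(F,\omega_F)\otimes_\C\O_{\P^1}$, so the relative Iitaka model of $(X,\omega_{X/\P^1})$ is the constant variety $I$ (the Iitaka model of $F$). Resolving indeterminacy yields $\eta:X'\to I$ whose restriction to a general fiber of $f':X'\to\P^1$ is the Iitaka fibration of $F$. For a very general fiber $Z'$ of $\eta$, the induced map $Z'\to\P^1$ has general fiber $G'$ a smooth fiber of the Iitaka fibration of $F$, and $\kappa(Z',\omega_{Z'/\P^1})=0$. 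Since $F$ has a good minimal model, so does $G'$ (e.g.\ \cite{HMX18}*{Theorem 1.2}); hence $Z'$ has a \emph{relative} good minimal model $Z''$ over $\P^1$, and from $f''_*\omega_{Z''/\P^1}^k\cong\O_{\P^1}$ one deduces $\omega_{Z''}^k\cong f''^*\omega_{\P^1}^k$, so $K_{Z''}$ is not pseudo-effective and $Z'$ is uniruled by \cite{BDPP}. Thus $X'$, and hence $X$, is uniruled. The point you are missing is that one must pass to the \emph{relative} Iitaka fibration---which exists because of the canonical-ring triviality---rather than invoke an absolute minimal-model existence statement for $X$.
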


\begin{proof}
As in the proof of Proposition \ref{prop: log Iitaka conjecture over curve}, it suffices to consider the case when \(f\) is semistable with at most \(2\) singular fibers over \(\{0,\infty\}\). Let \(D=0+\infty\) and \(E=f^{-1}(D)\). Since \(K_{\P^1}+(1-\delta)D\) is not pseudo-effective for all \(\delta>0\), \(K_{X}+(1-\epsilon)E\) is not effective for all \(\epsilon>0\) by Theorem \ref{thm: effectivity descends to pseudo-effectivity} (1). Therefore, there is no nonzero morphism
\[
\O_{\P^1}(1)\to f_* \w_{ X/\P^1}^{k},
\]
which implies that
\begin{equation}
\label{eqn: 2 semistable fiber}
f_* \w_{ X/\P^1}^{k}\isom\bigoplus^{P_k(F)}\O_{\P^1}.
\end{equation}
Alternatively, \cite{VZ01}*{Proposition 4.2} proves that the degree of \(f_* \w_{ X/\P^1}^{k}\) is equal to zero, which implies \eqref{eqn: 2 semistable fiber}.
As a consequence, the multiplication map
\[
f_* \w_{ X/\P^1}^{k}\tensor  f_* \w_{X/\P^1}^{l}\to f_* \w_{ X/\P^1}^{k+l}
\]
is constant on the fibers, and hence is isomorphic to
\[
\left[H^0(F,\w_F^k)\tensor_\C H^0(F,\w_F^l)\to H^0(F,\w_F^{k+l})\right] \tensor_\C \O_{\P^1}.
\]
Therefore, the canonical ring of every smooth fiber is isomorphic to the canonical ring \(R(F,\w_F)\) of \(F\), which is isomorphic to \(R(X, \w_{X/\P^1})\), establishing the first assertion. Accordingly, the Iitaka model \(I\) of \((X,\w_{X/\P^1})\) is the Iitaka model of \((F,\w_F)\). Resolving the indeterminacy of the Iitaka morphism, we obtain the following diagram:
\begin{displaymath}
\xymatrix{
{X'}\ar@/_/[dd]_{f'}\ar[d]^-{\mu}\ar[rd]^-{\eta}&\\
{X}\ar@{-->}[r]^-{}\ar[d]^-{f}& {I}\\
{\P^1}&
}
\end{displaymath}
Notice that \(\eta\) is also the Iitaka fibration of \((X',\w_{X'/\P^1})\), and the restriction map \(\eta|_{f'^{-1}(y)}:f'^{-1}(y)\to I\) for the general point \(y\in \P^1\) is the Iitaka fibration of \(F\).

Suppose \(F\) has a good minimal model. Then, it is well known that every fiber in the smooth locus of the Iitaka fibration of \(F\) has a good minimal model (see e.g. \cite{HMX18}*{Theorem 1.2}). Now, it suffices to prove that \(X'\) is uniruled, hence that the very general fiber \(Z'\) of \(\eta\) is uniruled. We have \(\k(Z',\w_{X'/\P^1}|_{Z'})=0\) by Iitaka's theory of D-dimension \cite{Mori87}*{(1.11)}, which implies that \(\k(Z',\w_{Z'/\P^1})=0\). Consider \(f':Z'\to \P^1\), then its general fiber \(G'\) is the smooth fiber of the Iitaka fibration of \(F\). By Viehweg's weak positivity theorem, we have
\[
f'_*\w_{Z'/\P^1}^k\isom\O_{\P^1},
\]
whenever \(\w_{G'}^k\) admits sections. Since \(G'\) admits a good minimal model from the assumption, it is well known that \(Z'\) has a relative good minimal model \(Z''\) over \(\P^1\). If we denote \(f'':Z''\to \P^1\), then \(\w_{Z''}\) is \(f''\)-semi-ample. From \(f''_*\w_{Z''/\P^1}^k\isom\O_{\P^1}\), we have \(\w_{Z''}^k\isom f''^*\w_{\P^1}^k\). Accordingly, the canonical divisor \(K_{Z''}\) is not pseudo-effective, which implies that any resolution of \(Z''\) is uniruled \cite{BDPP}. Therefore, \(Z'\) is uniruled as desired.
\end{proof}

\begin{rmk}
In fact, Theorem \ref{thm: uniruledness} extends to the case where \(f\) has disconnected fibers. Indeed, the Stein factorization of \(f:X\to \P^1\) is \(\P^1\), and the induced morphism again has at most two singular fibers. This can be checked from the observation that a finite covering of \(\P^1\) branched over at most two points is either an identity map or a cyclic map, i.e. \(z\mapsto z^d\) for \(d>1\) with an appropriate choice of the coordinate \(z\).
\end{rmk}

Combining Corollary \ref{cor: degree of discriminant locus} and the non-vanishing conjecture, we obtain the following higher dimensional analogue of Conjecture \ref{conj: 2 singular fiber uniruled}. In the remaining section, we prove this conjecture for \(n=2\) under the same additional assumption as in Theorem \ref{thm: uniruledness}, that the general fiber has a good minimal model.

\begin{conj}
\label{conj: higher dimensional uniruled}
Let \(X\) be a smooth projective variety, and \(f:X\to \P^n\) be a surjective morphism with either \(\dim \Delta(f)\le n-2\) or \(\deg \Delta(f)\le n+1\). Then \(X\) is uniruled.
\end{conj}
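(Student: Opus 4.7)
The plan is to handle the two-dimensional case by hyperplane slicing when the discriminant is sparse enough, and to treat the boundary subcase $\deg\Delta(f)=3$ by a direct global argument on $\P^2$ using the logarithmic base change machinery and the Campana-P\u aun criterion.

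After compactification and log resolution, I may assume $f\colon (X,E)\to(\P^2,D)$ is a strict morphism of projective log smooth pairs with $E=f^{-1}(D)$ and $f|_{X\setminus E}$ smooth, where $D$ is the divisorial part of $\Delta(f)$; the hypothesis reads $\deg D\le 3$. First I would treat the range $\dim\Delta(f)\le 0$ (so $D=0$) or $\deg D\le 2$. For a general line $L\subset\P^2$, Bertini makes $X_L:=f^{-1}(L)$ smooth, and since $L$ avoids the codimension-two strata of $\Delta(f)$ and meets $D$ transversely in at most $2$ points, the restricted morphism $X_L\to L\cong\P^1$ has at most two singular fibers with general fiber still $F$. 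Theorem~\ref{thm: uniruledness} then makes $X_L$ uniruled, and varying $L$ over the pencil of lines through $f(x)$ for any $x\in X$ produces a uniruled slice containing $x$, so $X$ is uniruled.

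The remaining case $\deg D=3$ needs a global argument because every general line slice inherits three singular fibers. Note $K_{\P^2}+D=0$ here, so $\omega_{\P^2}(D)\cong\mathcal O$. Suppose for contradiction that an ample line bundle $A$ on $\P^2$ admits a nonzero morphism into $\bigl[f_*(\omega_X(E)^{\otimes N})\bigr]^{\ast\ast}$ for some $N$. Combining the logarithmic fiber product trick (Corollary~\ref{cor: logarithmic fiber product trick}) with Remark~\ref{rmk: assumption in logarithmic Higgs sheaves} and Lemma~\ref{lem: CP setup}, applied with $\mathcal L=A\otimes\omega_{\P^2}(D)^{-N}=A$, yields a nonzero morphism $A^{\otimes r}\otimes P\to(\Omega_{\P^2}(\log D))^{\otimes kr}$ with $P$ pseudo-effective. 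Since $c_1(\Omega_{\P^2}(\log D))=K_{\P^2}+D=0$, Theorem~\ref{thm: CP main} forces every rank-one subsheaf of $(\Omega_{\P^2}(\log D))^{\otimes kr}$ to have non-positive $c_1$, contradicting the bigness of $A^{\otimes r}\otimes P$. Combined with Viehweg's weak positivity, this means $f_*(\omega_X(E)^{\otimes N})$ admits no ample sub-line-bundle. Mirroring the proof of Theorem~\ref{thm: uniruledness}, this numerical rigidity should let me identify the Iitaka morphism of $X$ with the Iitaka fibration of the general fiber $F$ over $\P^2$, after which a relative minimal model program over $\P^2$---available through the good minimal model of $F$---produces $X^{\min}\to\P^2$ with $K_{X^{\min}}$ relatively semi-ample. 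The canonical bundle formula combined with the numerical information then forces $K_{X^{\min}}$ to be $\Q$-numerically proportional to $(f^{\min})^*K_{\P^2}$, hence not pseudo-effective, so BDPP gives that $X^{\min}$---and therefore $X$---is uniruled.

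The principal obstacle is the $\deg D=3$ case. Over $\P^1$, a weakly positive torsion-free sheaf lacking ample sub-line bundles decomposes as $\mathcal O_{\P^1}^{\oplus r}$, which in Theorem~\ref{thm: uniruledness} directly forced the canonical rings of all smooth fibers to coincide via constant multiplication maps. Over $\P^2$ this rigidity is genuinely weaker, and upgrading the numerical statement on $f_*(\omega_X(E)^{\otimes N})$ to a compatible global Iitaka factorization $X\dashrightarrow I\to\P^2$ requires carefully combining the logarithmic base change machinery with the relative MMP for log-canonical fibrations over two-dimensional bases, where the canonical bundle formula has nontrivial moduli and discriminant contributions that must be controlled using $K_{\P^2}+D=0$.
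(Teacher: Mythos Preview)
First, note that the statement is stated as a conjecture; the paper does not prove it in general. What the paper does prove is the case $n=2$ under the additional hypothesis that the general fiber $F$ has a good minimal model (Corollary~\ref{cor: degree 3 uniruled}), which is also what you are attempting.

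For $\deg D\le 2$ (and $\dim\Delta(f)\le 0$) your line-slicing argument matches the paper's. The gap is in the $\deg D=3$ case. You assert that ``every general line slice inherits three singular fibers'' and therefore abandon slicing in favor of a global argument; this is the step that goes wrong. The paper's observation is that you do not need a \emph{general} line: if the cubic $C$ is smooth, the tangent lines to $C$ cover $\P^2$ and each meets $C$ set-theoretically in at most two points; if $C$ is singular, the pencil of lines through a singular point of $C$ does the same. Only finitely many such lines pass through the finite set $S=\Delta(f)\setminus C$, so the general member of either family meets $\Delta(f)$ in at most two points, and Theorem~\ref{thm: uniruledness} applies to its preimage exactly as in your $\deg D\le 2$ argument. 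This reduces the entire $n=2$ corollary to elementary plane-curve geometry plus the $\P^1$ result.

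Your alternative global route for $\deg D=3$ is, as you yourself acknowledge in the final paragraph, genuinely incomplete. The first step---that no ample line bundle embeds in $\bigl[f_*(\omega_X(E)^{\otimes N})\bigr]^{**}$ when $K_{\P^2}+D=0$---is sound. But over $\P^2$ this does not force the sheaf to be trivial the way it does over $\P^1$, and the leap from this numerical constraint to a global Iitaka factorization, followed by control of both the moduli and discriminant contributions in the canonical bundle formula, is left at the level of a plan rather than a proof. The paper's tangent-line trick sidesteps all of this.
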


\begin{cor}
\label{cor: degree 3 uniruled}
In the setting of Conjecture \ref{conj: higher dimensional uniruled}, assume that \(n=2\) and the fibers of \(f\) are connected. If the general fiber \(F\) has a good minimal model, then \(X\) is uniruled.
\end{cor}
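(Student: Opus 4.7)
The plan is to reduce Corollary~\ref{cor: degree 3 uniruled} to the one-dimensional case already settled in Theorem~\ref{thm: uniruledness}, by restricting $f$ to the preimages of carefully chosen lines in $\P^2$. Concretely, I will produce a one-parameter family $\{L_t\}_{t\in T}$ of lines in $\P^2$ whose members cover $\P^2$ and each of which meets $\Delta(f)$ in at most two set-theoretic points. Then for each general $L$ in the family, a suitable resolution $\tilde X_L$ of the component of $f^{-1}(L)$ dominating $L$ will yield a morphism $\tilde f_L\colon \tilde X_L\to L\isom \P^1$ from a smooth projective variety with at most two singular fibers, and Theorem~\ref{thm: uniruledness} will apply.

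The construction of the family proceeds by case analysis on $\Delta(f)$. If $\dim \Delta(f)\le 0$, I would take a general pencil of lines through a point outside $\Delta(f)$, whose general member avoids $\Delta(f)$ entirely. If the divisorial part of $\Delta(f)$ has degree at most $2$, any general pencil works because a line meets such a divisor in at most $2$ set-theoretic points. If $\Delta(f)$ is a cubic divisor with a singular point $p$ (nodal, cuspidal, or reducible), I would use the pencil of lines through $p$: by B\'ezout each such line meets $\Delta(f)$ with multiplicity at least $2$ at $p$, leaving at most one further set-theoretic intersection. Finally, if $\Delta(f)$ is a smooth cubic, I would take the one-parameter family of tangent lines parametrized by $\Delta(f)$; these cover $\P^2$ because the dual curve $\Delta(f)^\vee$ has degree $6>0$, so through every point of $\P^2$ passes a tangent line to $\Delta(f)$, and a general (non-inflection) tangent line meets $\Delta(f)$ in exactly two distinct points.

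For a general $L$ in the chosen family, $\tilde f_L\colon\tilde X_L\to L$ is a surjective morphism from a smooth projective variety to $\P^1$ with at most two singular fibers, located over $L\cap\Delta(f)$. The fibers of $\tilde f_L$ are connected, since the fibers of $f^{-1}(L)\to L$ coincide with the fibers of $f$ and hence are connected, and connectedness is preserved under the proper birational map $\tilde X_L\to f^{-1}(L)$. The general fiber of $\tilde f_L$ is the general fiber $F$ of $f$, which has a good minimal model by assumption. Theorem~\ref{thm: uniruledness} therefore gives that $\tilde X_L$ is uniruled, and by birational invariance so is $f^{-1}(L)\subset X$. Letting $L$ vary across the family, the subvarieties $f^{-1}(L)$ sweep out $X$, so the resulting covering family of rational curves shows that $X$ is uniruled. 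The main point that requires some care is the smooth cubic case, where the family of lines is not a linear pencil but is parametrized by a genus-one curve; once it is verified that the resolutions of $f^{-1}(L)$ can be chosen uniformly so that the connected-fibers and good-minimal-model hypotheses hold for a general member, the reduction to Theorem~\ref{thm: uniruledness} is essentially formal.
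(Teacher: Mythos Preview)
Your proposal is correct and follows essentially the same approach as the paper: reduce to Theorem~\ref{thm: uniruledness} by covering $\P^2$ with lines meeting $\Delta(f)$ in at most two points, using the same case analysis on the degree and singularities of the divisorial part of $\Delta(f)$ (general lines when $\deg C\le 2$, the pencil through a singular point when $C$ is a singular cubic, tangent lines when $C$ is a smooth cubic). The only point the paper makes explicit that you leave implicit is that $\Delta(f)$ may also contain a finite set $S$ of isolated points in addition to the curve $C$; in the cubic cases one should note that only finitely many lines in the chosen one-parameter family pass through $S$, so the general member still meets $\Delta(f)$ in at most two points.
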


\begin{proof}
The discriminant locus \(\Delta(f)\) is the union of a plane curve \(C\) (possibly empty) of degree at most \(3\) and a finite set \(S\) of points.

\textit{Case 1.} \(\deg C\le 2\) or \(C=\phi\).

The general projective line in \(\P^2\) intersects \(\Delta(f)\) at at most \(2\) points. Then the preimage of the line is uniruled by Theorem \ref{thm: uniruledness}. Therefore, \(X\) is uniruled.

\textit{Case 2.} \(\deg C=3\).

If \(C\) is a smooth cubic plane curve, \(\P^2\) is covered by projective lines tangent to \(C\), and only finitely many such lines pass through \(S\). If \(C\) is a singular cubic plane curve, pick a singular point \(p\) and consider all projective lines through that point. Likewise, only finitely many such lines pass through \(S\). In all cases, \(\P^2\) is generically covered by lines that intersect \(\Delta(f)\) at at most \(2\) points. Therefore, we conclude that \(X\) is uniruled.
\end{proof}

\begin{rmk}
For \(n\ge 3\), the possible configurations for the discriminant locus in Conjecture \ref{conj: higher dimensional uniruled} are very complicated; the case by case analysis in the proof of Corollary \ref{cor: degree 3 uniruled} is no longer valid. Still, if the degree of the divisorial component of the discriminant locus is at most \(3\) and \(F\) has a good minimal model, then \(X\) is uniruled. This is because when we slice \(\P^n\) by a general projective plane \(\P^2\subset \P^n\), then the preimage is uniruled by Corollary \ref{cor: degree 3 uniruled}.
\end{rmk}

\section{Boundary examples and further remarks}
\label{sec: further remarks}

We present boundary examples showing that the inequality in Corollary \ref{cor: degree of discriminant locus} is sharp. Specifically, we demonstrate morphisms to \(\P^n\) such that the domains have the Kodaira dimension equal to zero and the discriminant loci are divisors of degree \(n+2\) in \(\P^n\).

\begin{ex}[Isotrivial example with discriminant locus of minimal degree]
\label{ex: boundary not connected fiber}
First, we construct a finite cover of \(\P^n\) with trivial canonical bundle branched over a hypersurface of degree \(n+2\).
Let \(D\subset \P^n\) be a degree \(n+2\) hypersurface with simple normal crossing singularities. Consider a degree \(n+2\) cyclic cover \(X\to \P^n\) associated to \(D\). Then \(X\) has Gorenstein rational singularities and \(\w_X\isom \O_X\). We resolve the singularities of \(X\) by a series of smooth blow-ups over the branch locus, \(\mu:\widetilde X\to X\). Then the composition \(f:\widetilde X\to \P^n\) is an example with \(h^0(\w_{\widetilde X})=1\) and the discriminant locus \(\Delta (f)=D\) is the hypersurface of degree \(n+2\).
\end{ex}

Now, we use the construction above to build an example with connected fibers over \(\P^n\), with \(n\ge 2\). To begin with, when \(D\) is a smooth degree \(n+2\) hypersurface, the cover \(f:X\to \P^n\) we constructed above is a smooth Galois cover with a Galois group \(G=\Z/(n+2)\Z\). Consider a \(G\)-action on \(X\times X\):
\[
\sigma:G\times X\times X\to X\times X, \quad \sigma(g,x_1,x_2):= (g^{-1}x_1,gx_2).
\]
Since \(G\) is abelian, the action is well defined. Hence, the projection onto the second factor \(pr_2:X\times X\to X\) is a \(G\)-equivariant morphism, from which we obtain an induced morphism of quotients by the \(G\)-action:
\[
\overline{pr_2}:(X\times X)/G\to X/G\isom \P^n.
\]
Notice that all the fibers of \(\overline{pr_2}\) over \(\P^n\setminus D\) are isomorphic to \(X\). Therefore, if we denote by \(\mu: W \to (X\times X)/G\) a resolution of singularities, which is an isomorphism away from the singular locus, then we have \(\k(W)=0\) from the following lemma, and the induced morphism \(h:W\to \P^n\) satisfies \(\Delta(h)=D\).

\begin{lem}
Under the above notation, \((X\times X)/G\) has canonical singularities and the Kodaira dimension of a desingularization of \((X\times X)/G\) is equal to zero.
\end{lem}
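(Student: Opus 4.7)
The plan is to verify canonical singularities for \((X\times X)/G\) via the Reid-Tai criterion, and then deduce triviality of the canonical class by reading off the \(G\)-character on the global volume form; the Kodaira dimension statement then follows by standard descent to a resolution.

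First I would identify the fixed loci. Let \(\tau\) generate \(G\), let \(p\colon X\to \P^n\) be the cyclic cover, and let \(R\subset X\) denote its ramification divisor (which maps isomorphically onto \(D\)). Since \(p\) is totally ramified of order \(n+2\) along \(R\), the stabilizer of every point of \(R\) is all of \(G\) and \(\tau\) acts faithfully on the normal direction; hence for each \(k\not\equiv 0\pmod{n+2}\), the fixed locus of \(\tau^k\) on \(X\) equals \(R\). Under \(\sigma(g,x_1,x_2)=(g^{-1}x_1,gx_2)\), a nontrivial element of \(G\) therefore has fixed locus \(R\times R\subset X\times X\), of codimension \(2\). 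In particular, \(G\) acts freely in codimension one.

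Next I would linearize the action at a fixed point and apply Reid-Tai. Choose local analytic coordinates \((w_i,z_i^{(2)},\dots,z_i^{(n)})\) around \(r_i\in R\) in which \(\tau\) acts as \((w_i,\mathbf{z}_i)\mapsto (\zeta w_i,\mathbf{z}_i)\), with \(\zeta=\exp(2\pi\sqrt{-1}/(n+2))\). Then at \((r_1,r_2)\in R\times R\), the element \(\tau^k\) acts on the tangent space \(T_{(r_1,r_2)}(X\times X)\) with eigenvalues \((\zeta^{-k},1,\dots,1,\zeta^k,1,\dots,1)\), yielding a cyclic quotient singularity of type \(\tfrac{1}{n+2}(n+2-k,0,\dots,0,k,0,\dots,0)\). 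The Reid-Tai age satisfies
\[
\frac{n+2-k}{n+2}+\frac{k}{n+2}=1\ge 1
\]
for every \(1\le k\le n+1\), so \((X\times X)/G\) has canonical singularities.

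Then I would conclude with the Kodaira dimension. Because \(\omega_X\cong \O_X\), one has \(\omega_{X\times X}\cong \O_{X\times X}\), and \(G\) acts on the one-dimensional space of global top forms by a character \(\chi\colon G\to \C^*\). Evaluating in the coordinates above,
\[
\tau^k\cdot (dw_1\wedge d\mathbf{z}_1\wedge dw_2\wedge d\mathbf{z}_2)=(\zeta^{-k})(\zeta^k)\,(dw_1\wedge d\mathbf{z}_1\wedge dw_2\wedge d\mathbf{z}_2),
\]
so \(\chi\) is trivial. Since \(G\) acts freely in codimension one, descent gives \(\omega^{[1]}_{(X\times X)/G}\cong \O_{(X\times X)/G}\). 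For any resolution \(\mu\colon W\to (X\times X)/G\), canonical singularities imply \(\mu_*\omega_W^{\otimes m}\cong \omega^{[m]}_{(X\times X)/G}\cong \O\) for every \(m\ge 1\), hence \(h^0(W,mK_W)=1\) for all such \(m\) and \(\kappa(W)=0\).

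The only delicate step is the explicit linearization at the fixed locus and the resulting verification that the \(G\)-character on \(\omega_{X\times X}\) is trivial; once these are in hand, the rest is a routine application of Reid-Tai together with the descent of the canonical sheaf under a finite group acting freely in codimension one.
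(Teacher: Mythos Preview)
Your proof is correct and follows the same strategy as the paper: identify the fixed locus of the \(G\)-action as \(R\times R\), linearize there, apply the Reid--Tai criterion, and then use that the quotient map is \'etale in codimension one to read off the Kodaira dimension. The only cosmetic difference is that the paper packages the local singularity as \(\C^{2n-2}\times\bigl(\C^2/\tfrac{1}{n+2}(1,q)\bigr)\) and checks \(q=-1\) by a connectedness argument (verifying it on the diagonal of \(E\times E\)), whereas you compute the age of each \(\tau^k\) directly---a slightly more explicit but equivalent verification.
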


\begin{proof}
Let \(E=f^{-1}(D)\) be the ramification divisor of \(f\) on \(X\). The \(G\)-action is free outside of \(E\times E\subset X\times X\); hence, it suffices to prove that the quotient singularities at points \((x_1,x_2)\in E\times E\) are canonical. Let \(\zeta\) be a primitive \((n+2)\)-th root of unity. Locally analytically, the \(G\)-action near \(x\in E\subset X\) is equivalent to
\[
(z_1,\dots,z_{n-1}, z_n)\mapsto (z_1,\dots,z_{n-1}, \zeta^m z_n), \quad m\in \Z/(n+2)\Z
\]
where \(z_1,\dots,z_{n-1}, z_n\) are local coordinates and \(z_n=0\) is a local equation of \(E\). Therefore, at each point \((x_1,x_2)\in E\times E\), we obtain a singularity, which is analytically isomorphic to
\[
\C^{2n-2}\times \left(\C^2/\tfrac{1}{n+2}(1,q)\right)
\]
where \(\C^2/\tfrac{1}{n+2}(1,q)\) is the standard abbreviation of the surface cyclic quotient singularity of type \(\tfrac{1}{n+2}(1,q)\). This singularity is well known to be canonical if and only if \(q=-1\) (e.g. Reid-Tai Criterion \cite{Reid87}*{(4.11)}). Therefore, the locus in \(E\times E\) where the quotient singularities are canonical is both open and closed in the analytic topology. Notice that \(q=-1\) on the diagonal \(E\subset E\times E\) from the definition of the \(G\)-action. Thus, \((X\times X)/G\) has canonical singularities.

Since the quotient morphism \(X\times X\to (X\times X)/G\) is \'etale away from a codimension \(2\) subvariety, we have an equality
\[
\k(X\times X, \w_{X\times X})=\k\left((X\times X)/G, \w_{(X\times X)/G}\right)=0,
\]
which completes the proof.
\end{proof}

In particular, when \(n=1\), we have a triple cyclic cover \(f:C\to \P^1\), with \(C\) an elliptic curve branched over three points. Then the minimal resolution of \((C\times C)/G\) turns out to be an elliptic K3 surface fibered over \(\P^1\) with three singular fibers sitting above the branch points. See \cite{Sawon14}*{Section 2} for details.

The next example is a threefold with zero Kodaira dimension, mapping to \(\P^1\) with three singular fibers, obtained as an application of Viehweg's base change theorem. In this example, the fibers are connected and the family is birationally non-isotrivial, unlike in the previous one. We say that the family is birationally isotrivial if the general fibers are pairwise birationally isomorphic.

\begin{ex}[Non-isotrivial example with discriminant locus of minimal degree]
\label{ex: boundary connected fiber}
Let \(f:S\to \P^1\) be a family of elliptic curves parametrized by
\[
y^2=x(x-1)(x-\lambda).
\]
To be specific, let \(S\subset \P^2\times \P^1\) be a hypersurface of type \((3,1)\) defined by \(y^2z=x(x-z)(x-\lambda z)\), where \(x,y,z\) are the coordinates of \(\P^2\) and \(\lambda\) is the coordinate of \(\P^1=\A^1\cup \{\infty\}\). By adjunction, \(\w_S\isom f^*\O_{\P^1}(-1)\), so that
\[
f_*\w_{S/\P^1}\isom \O_{\P^1}(1).
\]
Notice that \(f\) has three singular fibers at \(\{0,1,\infty\}\); over \(\{0,1\}\) we have a nodal cubic curve, and over \(\{\infty\}\) we have a union of three concurrent lines. We can verify by computation that \(S\) has \(A_1\)-singularities at the nodes over \(\{0,1\}\) and has an \(A_4\)-singularity at the planar triple point over \(\{\infty\}\). Let \(\mu:\widetilde S\to S\) be the minimal resolution of singularities and \(\tilde f:\widetilde S\to \P^1\) be the induced morphism. Then \(\tilde f\) has semistable fibers over \(\{0,1\}\) and a non-semistable fiber over \(\{\infty\}\), with
\[
\tilde f_*\w_{\widetilde S/\P^1}\isom \O_{\P^1}(1).
\]
Take an automorphism of \(\P^1\), fixing \(0\) and flipping \(1\) and \(\infty\). Let \(\tilde f':\widetilde S\to \P^1\) be the induced family, which now has semistable fibers over \(\{0,\infty\}\) and a non-semistable fiber over \(\{1\}\). Let \(X:=\widetilde S\times_{\P^1}\widetilde S\) be the fiber product of \(\tilde f\) and \(\tilde f'\), and let \(\widetilde X\) be the resolution of \(X\), which is an isomorphism over the smooth locus of \(X\). Let \(h:\widetilde X\to \P^1\) be the induced morphism. By Viehweg's base change theorem \ref{thm: base change theorem},
\[
h_*\w_{\widetilde X/\P^1}\isom\O_{\P^1}(2),
\]
which implies that \(h^0(\w_{\widetilde X})=1\). In fact, we have \(\k(\widetilde X)= 0\), analyzing the pushforwards of pluri-canonical bundles. Therefore, there exists a threefold \(\widetilde X\) such that \(\k(\widetilde X)= 0\), and \(\widetilde X\to \P^1\) is birationally non-isotrivial with exactly three singular fibers.
\end{ex}

We close by addressing a question, raised by Kov\'acs \cite{Kovacs00}*{Question 0.6}, whether imposing a stronger condition on the Kodaira dimension of the total space increases the degree of the discriminant locus. We observe that this is not the case, by demonstrating morphisms \(f:X\to \P^n\) such that \(X\) is of general type and the degree of the discriminant locus \(\Delta(f)\) is either \(n+2\) or \(n+3\).

\begin{ex}[General type example with discriminant locus of small degree]
We first explain a general strategy to construct an explicit example. Suppose we have a smooth Galois cover \(g:Y\to \P^n\), with a Galois group \(G\), branched over a smooth hypersurface \(D\) as in Example \ref{ex: boundary not connected fiber}. For a positive integer \(N\), the projection onto the N-th factor,
\[
pr:Y^N\to Y,
\]
is a \(G\)-equivariant morphism, where \(Y^N\) is endowed with the diagonal \(G\)-action. Then the induced morphism
\[
\overline{pr}:Y^N/G\to Y/G\isom \P^n
\]
has \(\Delta(\overline{pr})=D\), and the general fiber is connected. Furthermore, if \(Y\) is of general type, then a desingularization of \(Y^N/G\) is of general type for \(N\gg0\), by Caporaso-Harris-Mazur \cite{CHM}*{Corollary 4.1}. Therefore, we obtain a morphism \(f:X\to \P^n\) such that \(X\) is of general type and \(\Delta(f)=D\).

When \(n=1\), we start with a Galois Bely\u\i{} map \(C\to \P^1\) such that \(C\) is a curve of genus at least \(2\) (e.g. \cite{SV14}*{Section 6}). Then the construction above shows that the minimal degree of \(\Delta(f)\) is indeed \(3\). 

When \(n\ge 2\), take a cyclic cover \(g:Y\to \P^n\) branched over a smooth hypersurface of degree \(n+3\). Then \(Y\) is of general type; consequently, we have an example with \(\deg \Delta(f)=n+3\) via the construction above. We expect there to be an example with \(\deg \Delta(f)=n+2\), which we do not know yet.
\end{ex}

\begin{bibdiv}
    \begin{biblist}

\bib{BM06}{article}{
   author={Bierstone, Edward},
   author={Milman, Pierre D.},
   title={Desingularization of toric and binomial varieties},
   journal={J. Algebraic Geom.},
   volume={15},
   date={2006},
   number={3},
   pages={443--486},
   issn={1056-3911},
   review={\MR{2219845}},
   doi={10.1090/S1056-3911-06-00430-9},
}

\bib{BDPP}{article}{
   author={Boucksom, S\'{e}bastien},
   author={Demailly, Jean-Pierre},
   author={P\u{a}un, Mihai},
   author={Peternell, Thomas},
   title={The pseudo-effective cone of a compact K\"{a}hler manifold and
   varieties of negative Kodaira dimension},
   journal={J. Algebraic Geom.},
   volume={22},
   date={2013},
   number={2},
   pages={201--248},
   issn={1056-3911},
   review={\MR{3019449}},
   doi={10.1090/S1056-3911-2012-00574-8},
}

\bib{Brunebarbe18}{article}{
   author={Brunebarbe, Yohan},
   title={Symmetric differentials and variations of Hodge structures},
   journal={J. Reine Angew. Math.},
   volume={743},
   date={2018},
   pages={133--161},
   issn={0075-4102},
   review={\MR{3859271}},
   doi={10.1515/crelle-2015-0109},
}

\bib{Campana22}{article}{
   author={Campana, Fr\'{e}d\'{e}ric},
   title={Kodaira additivity, birational isotriviality and specialness},
   journal={preprint arXiv:2207.05412},
   date={2022},
}

\bib{CP19}{article}{
   author={Campana, Fr\'{e}d\'{e}ric},
   author={P\u{a}un, Mihai},
   title={Foliations with positive slopes and birational stability of
   orbifold cotangent bundles},
   journal={Publ. Math. Inst. Hautes \'{E}tudes Sci.},
   volume={129},
   date={2019},
   pages={1--49},
   issn={0073-8301},
   review={\MR{3949026}},
   doi={10.1007/s10240-019-00105-w},
}

\bib{CHM}{article}{
   author={Caporaso, Lucia},
   author={Harris, Joe},
   author={Mazur, Barry},
   title={Uniformity of rational points},
   journal={J. Amer. Math. Soc.},
   volume={10},
   date={1997},
   number={1},
   pages={1--35},
   issn={0894-0347},
   review={\MR{1325796}},
   doi={10.1090/S0894-0347-97-00195-1},
}

\bib{CS95}{article}{
   author={Catanese, Fabrizio},
   author={Schneider, Michael},
   title={Polynomial bounds for abelian groups of automorphisms},
   note={Special issue in honour of Frans Oort},
   journal={Compositio Math.},
   volume={97},
   date={1995},
   number={1-2},
   pages={1--15},
   issn={0010-437X},
   review={\MR{1355114}},
}

\bib{Fujino20}{book}{
   author={Fujino, Osamu},
   title={Iitaka conjecture---an introduction},
   series={SpringerBriefs in Mathematics},
   publisher={Springer, Singapore},
   date={[2020] \copyright 2020},
   pages={xiv+128},
   isbn={978-981-15-3347-1},
   isbn={978-981-15-3346-4},
   review={\MR{4177797}},
   doi={10.1007/978-981-15-3347-1},
}

\bib{HMX18}{article}{
   author={Hacon, Christopher D.},
   author={McKernan, James},
   author={Xu, Chenyang},
   title={Boundedness of moduli of varieties of general type},
   journal={J. Eur. Math. Soc. (JEMS)},
   volume={20},
   date={2018},
   number={4},
   pages={865--901},
   issn={1435-9855},
   review={\MR{3779687}},
   doi={10.4171/JEMS/778},
}

\bib{Hartshorne77}{book}{
   author={Hartshorne, Robin},
   title={Algebraic geometry},
   series={Graduate Texts in Mathematics, No. 52},
   publisher={Springer-Verlag, New York-Heidelberg},
   date={1977},
   pages={xvi+496},
   isbn={0-387-90244-9},
   review={\MR{0463157}},
}

\bib{Hashizume20}{article}{
   author={Hashizume, Kenta},
   title={Log Iitaka conjecture for abundant log canonical fibrations},
   journal={Proc. Japan Acad. Ser. A Math. Sci.},
   volume={96},
   date={2020},
   number={10},
   pages={87--92},
   issn={0386-2194},
   review={\MR{4184276}},
   doi={10.3792/pjaa.96.017},
}

\bib{Hwang08}{article}{
   author={Hwang, Jun-Muk},
   title={Base manifolds for fibrations of projective irreducible symplectic
   manifolds},
   journal={Invent. Math.},
   volume={174},
   date={2008},
   number={3},
   pages={625--644},
   issn={0020-9910},
   review={\MR{2453602}},
   doi={10.1007/s00222-008-0143-9},
}

\bib{HO09}{article}{
   author={Hwang, Jun-Muk},
   author={Oguiso, Keiji},
   title={Characteristic foliation on the discriminant hypersurface of a
   holomorphic Lagrangian fibration},
   journal={Amer. J. Math.},
   volume={131},
   date={2009},
   number={4},
   pages={981--1007},
   issn={0002-9327},
   review={\MR{2543920}},
   doi={10.1353/ajm.0.0062},
}

\bib{Iitaka77}{article}{
   author={Iitaka, S.},
   title={On logarithmic Kodaira dimension of algebraic varieties},
   conference={
      title={Complex analysis and algebraic geometry},
   },
   book={
      publisher={Iwanami Shoten, Tokyo},
   },
   date={1977},
   pages={175--189},
   review={\MR{0569688}},
}

\bib{Kawamata82}{article}{
   author={Kawamata, Yujiro},
   title={Kodaira dimension of algebraic fiber spaces over curves},
   journal={Invent. Math.},
   volume={66},
   date={1982},
   number={1},
   pages={57--71},
   issn={0020-9910},
   review={\MR{652646}},
   doi={10.1007/BF01404756},
}

\bib{Kawamata85}{article}{
   author={Kawamata, Yujiro},
   title={Minimal models and the Kodaira dimension of algebraic fiber
   spaces},
   journal={J. Reine Angew. Math.},
   volume={363},
   date={1985},
   pages={1--46},
   issn={0075-4102},
   review={\MR{814013}},
   doi={10.1515/crll.1985.363.1},
}

\bib{Kawamata13}{article}{
   author={Kawamata, Yujiro},
   title={On the abundance theorem in the case of numerical Kodaira
   dimension zero},
   journal={Amer. J. Math.},
   volume={135},
   date={2013},
   number={1},
   pages={115--124},
   issn={0002-9327},
   review={\MR{3022959}},
   doi={10.1353/ajm.2013.0009},
}

\bib{KK08}{article}{
   author={Kebekus, Stefan},
   author={Kov\'{a}cs, S\'{a}ndor J.},
   title={Families of canonically polarized varieties over surfaces},
   journal={Invent. Math.},
   volume={172},
   date={2008},
   number={3},
   pages={657--682},
   issn={0020-9910},
   review={\MR{2393082}},
   doi={10.1007/s00222-008-0128-8},
}

\bib{KK10}{article}{
   author={Kebekus, Stefan},
   author={Kov\'{a}cs, S\'{a}ndor J.},
   title={The structure of surfaces and threefolds mapping to the moduli
   stack of canonically polarized varieties},
   journal={Duke Math. J.},
   volume={155},
   date={2010},
   number={1},
   pages={1--33},
   issn={0012-7094},
   review={\MR{2730371}},
   doi={10.1215/00127094-2010-049},
}

\bib{KMM94}{article}{
   author={Keel, Sean},
   author={Matsuki, Kenji},
   author={McKernan, James},
   title={Log abundance theorem for threefolds},
   journal={Duke Math. J.},
   volume={75},
   date={1994},
   number={1},
   pages={99--119},
   issn={0012-7094},
   review={\MR{1284817}},
   doi={10.1215/S0012-7094-94-07504-2},
}

\bib{KKMS}{book}{
   author={Kempf, G.},
   author={Knudsen, Finn Faye},
   author={Mumford, D.},
   author={Saint-Donat, B.},
   title={Toroidal embeddings. I},
   series={Lecture Notes in Mathematics, Vol. 339},
   publisher={Springer-Verlag, Berlin-New York},
   date={1973},
   pages={viii+209},
   review={\MR{0335518}},
}

\bib{Kollar13}{book}{
   author={Koll\'{a}r, J\'{a}nos},
   title={Singularities of the minimal model program},
   series={Cambridge Tracts in Mathematics},
   volume={200},
   note={With a collaboration of S\'{a}ndor Kov\'{a}cs},
   publisher={Cambridge University Press, Cambridge},
   date={2013},
   pages={x+370},
   isbn={978-1-107-03534-8},
   review={\MR{3057950}},
   doi={10.1017/CBO9781139547895},
}

\bib{Kovacs00}{article}{
   author={Kov\'{a}cs, S\'{a}ndor J.},
   title={Algebraic hyperbolicity of fine moduli spaces},
   journal={J. Algebraic Geom.},
   volume={9},
   date={2000},
   number={1},
   pages={165--174},
   issn={1056-3911},
   review={\MR{1713524}},
}

\bib{Kovacs13}{article}{
   author={Kov\'{a}cs, S\'{a}ndor J.},
   title={Singularities of stable varieties},
   conference={
      title={Handbook of moduli. Vol. II},
   },
   book={
      series={Adv. Lect. Math. (ALM)},
      volume={25},
      publisher={Int. Press, Somerville, MA},
   },
   date={2013},
   pages={159--203},
   review={\MR{3184177}},
}

\bib{Lai11}{article}{
   author={Lai, Ching-Jui},
   title={Varieties fibered by good minimal models},
   journal={Math. Ann.},
   volume={350},
   date={2011},
   number={3},
   pages={533--547},
   issn={0025-5831},
   review={\MR{2805635}},
   doi={10.1007/s00208-010-0574-7},
}

\bib{Maehara86}{article}{
   author={Maehara, Kazuhisa},
   title={The weak $1$-positivity of direct image sheaves},
   journal={J. Reine Angew. Math.},
   volume={364},
   date={1986},
   pages={112--129},
   issn={0075-4102},
   review={\MR{817641}},
   doi={10.1515/crll.1986.364.112},
}

\bib{MP21}{article}{
   author={Meng, Fanjun},
   author={Popa, Mihnea},
   title={Kodaira dimension of fibrations over abelian varieties},
   journal={preprint arXiv:2111.14165},
   date={2021},
}

\bib{Mori87}{article}{
   author={Mori, Shigefumi},
   title={Classification of higher-dimensional varieties},
   conference={
      title={Algebraic geometry, Bowdoin, 1985},
      address={Brunswick, Maine},
      date={1985},
   },
   book={
      series={Proc. Sympos. Pure Math.},
      volume={46},
      publisher={Amer. Math. Soc., Providence, RI},
   },
   date={1987},
   pages={269--331},
   review={\MR{927961}},
}

\bib{Nakayama04}{book}{
   author={Nakayama, Noboru},
   title={Zariski-decomposition and abundance},
   series={MSJ Memoirs},
   volume={14},
   publisher={Mathematical Society of Japan, Tokyo},
   date={2004},
   pages={xiv+277},
   isbn={4-931469-31-0},
   review={\MR{2104208}},
}

\bib{Pieloch21}{article}{
   author={Pieloch, Alex},
   title={Sections and unirulings of families over $\mathbb{P}^1$},
   journal={preprint arXiv:2110.09494},
   date={2021},
}

\bib{Popa21}{article}{
   author={Popa, Mihnea},
   title={Conjectures on the Kodaira dimension},
   journal={preprint arXiv:2111.10900},
   date={2021},
}
    
\bib{PS17}{article}{
   author={Popa, Mihnea},
   author={Schnell, Christian},
   title={Viehweg's hyperbolicity conjecture for families with maximal
   variation},
   journal={Invent. Math.},
   volume={208},
   date={2017},
   number={3},
   pages={677--713},
   issn={0020-9910},
   review={\MR{3648973}},
   doi={10.1007/s00222-016-0698-9},
}

\bib{PS22}{article}{
   author={Popa, Mihnea},
   author={Schnell, Christian},
   title={On the behavior of Kodaira dimension under smooth morphisms},
   journal={preprint arXiv:2202.02825},
   date={2022},
}

\bib{PW16}{article}{
   author={Popa, Mihnea},
   author={Wu, Lei},
   title={Weak positivity for Hodge modules},
   journal={Math. Res. Lett.},
   volume={23},
   date={2016},
   number={4},
   pages={1139--1155},
   issn={1073-2780},
   review={\MR{3554504}},
   doi={10.4310/MRL.2016.v23.n4.a8},
}

\bib{Reid87}{article}{
   author={Reid, Miles},
   title={Young person's guide to canonical singularities},
   conference={
      title={Algebraic geometry, Bowdoin, 1985},
      address={Brunswick, Maine},
      date={1985},
   },
   book={
      series={Proc. Sympos. Pure Math.},
      volume={46},
      publisher={Amer. Math. Soc., Providence, RI},
   },
   date={1987},
   pages={345--414},
   review={\MR{927963}},
}

\bib{Sawon14}{article}{
   author={Sawon, Justin},
   title={Isotrivial elliptic K3 surfaces and Lagrangian fibrations},
   journal={preprint arXiv:1406.1233},
   date={2014},
}

\bib{Shokurov96}{article}{
   author={Shokurov, V. V.},
   title={$3$-fold log models},
   note={Algebraic geometry, 4},
   journal={J. Math. Sci.},
   volume={81},
   date={1996},
   number={3},
   pages={2667--2699},
   issn={1072-3374},
   review={\MR{1420223}},
   doi={10.1007/BF02362335},
}

\bib{SV14}{article}{
   author={Sijsling, J.},
   author={Voight, J.},
   title={On computing Belyi maps},
   language={English, with English and French summaries},
   conference={
      title={Num\'{e}ro consacr\'{e} au trimestre ``M\'{e}thodes arithm\'{e}tiques et
      applications'', automne 2013},
   },
   book={
      series={Publ. Math. Besan\c{c}on Alg\`ebre Th\'{e}orie Nr.},
      volume={2014/1},
      publisher={Presses Univ. Franche-Comt\'{e}, Besan\c{c}on},
   },
   date={2014},
   pages={73--131},
   review={\MR{3362631}},
}

\bib{Steenbrink76}{article}{
   author={Steenbrink, Joseph},
   title={Limits of Hodge structures},
   journal={Invent. Math.},
   volume={31},
   date={1975/76},
   number={3},
   pages={229--257},
   issn={0020-9910},
   review={\MR{429885}},
   doi={10.1007/BF01403146},
}

\bib{Viehweg83}{article}{
   author={Viehweg, Eckart},
   title={Weak positivity and the additivity of the Kodaira dimension for
   certain fibre spaces},
   conference={
      title={Algebraic varieties and analytic varieties},
      address={Tokyo},
      date={1981},
   },
   book={
      series={Adv. Stud. Pure Math.},
      volume={1},
      publisher={North-Holland, Amsterdam},
   },
   date={1983},
   pages={329--353},
   review={\MR{715656}},
   doi={10.2969/aspm/00110329},
}

\bib{Viehweg83II}{article}{
   author={Viehweg, Eckart},
   title={Weak positivity and the additivity of the Kodaira dimension. II.
   The local Torelli map},
   conference={
      title={Classification of algebraic and analytic manifolds},
      address={Katata},
      date={1982},
   },
   book={
      series={Progr. Math.},
      volume={39},
      publisher={Birkh\"{a}user Boston, Boston, MA},
   },
   date={1983},
   pages={567--589},
   review={\MR{728619}},
}

\bib{VZ01}{article}{
   author={Viehweg, Eckart},
   author={Zuo, Kang},
   title={On the isotriviality of families of projective manifolds over
   curves},
   journal={J. Algebraic Geom.},
   volume={10},
   date={2001},
   number={4},
   pages={781--799},
   issn={1056-3911},
   review={\MR{1838979}},
}

\bib{VZ02}{article}{
   author={Viehweg, Eckart},
   author={Zuo, Kang},
   title={Base spaces of non-isotrivial families of smooth minimal models},
   conference={
      title={Complex geometry},
      address={G\"{o}ttingen},
      date={2000},
   },
   book={
      publisher={Springer, Berlin},
   },
   date={2002},
   pages={279--328},
   review={\MR{1922109}},
}

\bib{Zucker84}{article}{
   author={Zucker, Steven},
   title={Degeneration of Hodge bundles (after Steenbrink)},
   conference={
      title={Topics in transcendental algebraic geometry},
      address={Princeton, N.J.},
      date={1981/1982},
   },
   book={
      series={Ann. of Math. Stud.},
      volume={106},
      publisher={Princeton Univ. Press, Princeton, NJ},
   },
   date={1984},
   pages={121--141},
   review={\MR{756849}},
}

\bib{Zuo00}{article}{
   author={Zuo, Kang},
   title={On the negativity of kernels of Kodaira-Spencer maps on Hodge
   bundles and applications},
   note={Kodaira's issue},
   journal={Asian J. Math.},
   volume={4},
   date={2000},
   number={1},
   pages={279--301},
   issn={1093-6106},
   review={\MR{1803724}},
   doi={10.4310/AJM.2000.v4.n1.a17},
}

    \end{biblist}
\end{bibdiv}

\end{document}